\theoremstyle{plain}
\newtheorem{thm}{Theorem}[section]
\theoremstyle{definition} 
\newtheorem{lem}[thm]{Lemma}
\newtheorem{cor}[thm]{Corollary}
\newtheorem{prop}[thm]{Proposition}
\newtheorem{exa}[thm]{Example}
\newtheorem{defn}[thm]{Definition}
\newtheorem{rem}[thm]{Remark}
\newtheorem*{thmx}{Theorem}
\newtheorem*{exax}{Example}
\newtheorem*{corx}{Corollary}
\numberwithin{equation}{section}
\newcounter{cnt}
\def\mydggeometry{\makeatletter\dg@YGRID=1\dg@XGRID=20\unitlength=0.003pt\makeatother}
\makeatother \theoremstyle{remark}
\numberwithin{equation}{section}
\newcommand{\nc}{\newcommand}
\nc{\lie}[1]{\mathfrak{#1}} \nc\bp{\mathbf{p}} \nc\bq{\mathbf{q}} \nc\mD{\mathbb{D}} \nc\bs{\mathbf{s}} \nc\bt{\mathbf{t}} \nc\bz{\mathbb{Z}} \nc\bc{\mathbb C} 
\nc\Max{\operatorname{Max}}
\newcommand{\g}{\mathfrak{g}}
\newcommand{\n}{\mathfrak{n}}
\newcommand{\hra}{\hookrightarrow}
\newcommand{\mc}{\mathbb{C}}
\newcommand{\bb}{\mathfrak{b}}
\newcommand{\h}{\mathfrak{h}}
\newcommand{\m}{\omega}
\newcommand{\s}{\mathfrak{s}}
\begin{document}

\author{Kunda Kambaso}
\title{Homogeneous bases for Demazure modules}
\address{Lehrstuhl f\"ur Algebra und Darstellungstheorie, RWTH Aachen University, 52062 Aachen, Germany}
\email{kambaso@art.rwth-aachen.de}

\begin{abstract}
We study the PBW filtration on the Demazure modules $V_{r_{\gamma}}(\lambda)$ associated to reflections $r_{\gamma}$ at positive roots in type $A_n$, long roots in type $C_n$, short roots in type $B_n$ and positive roots not involving the simple root $\alpha_{n-1}$ in type $D_n$. In each type, we construct a normal polytope labelling a basis for the associated graded space $\operatorname{gr} V_{r_{\gamma}}(\lambda)$ and prove that the annihilating ideal of $\operatorname{gr} V_{r_{\gamma}}(\lambda)$ is a monomial ideal.
\end{abstract}

\keywords{PBW filtration, Demazure modules.}
\subjclass[2010]{17B10, 16S30, 05E10 }
\maketitle

\section{Introduction}
Let $\g$ be a finite-dimensional complex Lie algebra. The PBW filtration on the universal enveloping algebra $U (\g)$ of $\g$ is defined as
\[
U_s(\g) = \operatorname{span} \{ x_{i_1} \cdots x_{i_l} : x_{i_j} \in \g , l \leq s \}.
\]
One obtains an induced filtration on any cyclic $\g \operatorname{-module} M$. The associated graded space denoted $\operatorname{gr} M$ is a module for the abelianised Lie algebra $\g^a$ and the symmetric algebra $S(\g)$. We consider the case when $\g$ is a complex simple Lie algebra with triangular decomposition $ \g = \n^+ \oplus \h \oplus \n^-$ where $\bb = \n^+ \oplus \h$ is the Borel subalgebra. Let $\lambda$ be a dominant integral weight for $\g$ and denote by $V(\lambda) = U(\n^-) v_{\lambda}$, an irreducible finite-dimensional highest weight module of $\g$. Let $\operatorname{gr} V(\lambda)$ denote the associated graded space of the PBW filtration on $V(\lambda)$ induced by the filtration on $U(\n^-)$. Then $\operatorname{gr} V(\lambda)$ is a cyclic module for the deformed Lie algebra $\bb \oplus \n^{-,a}$. The associated graded spaces $\operatorname{gr} V(\lambda)$ have been studied quite a lot in recent years in a series of papers \cite{FFLa1, FFLa2, Gor, FFLb, FFLc, BD} and many more. In type $A$ and $C$, the annihilating ideal for $\operatorname{gr} V(\lambda)$ has been computed as a $U(\n^+)$-module in \cite{FFLa1} and \cite{FFLa2} respectively and a normal polytope $P(\lambda)$ (also known as the Feigin-Fourier-Littlemann-Vinberg polytope or just FFLV polytope) labelling a basis of $\operatorname{gr} V(\lambda)$ has been constructed in each type. The inequalities defining these polytopes can be described using Dyck paths in the poset $(R^+, \leq)$ where $R^+$ is the set of positive roots of $\g$. The associated PBW degenerate projective varieties have been studied for example in \cite{Fea1, Fea2, IL} for type $A$ and in \cite{FeFiL} for type $C$. We would like to point out that constructions of bases (or polytopes) similar to the ones mentioned above can be found in \cite{BK} and \cite{M} for type $B$ and adding \cite{Gor2} for type $D$.

In the present paper, we study the PBW filtration on Demazure modules, i.e. submodules of $V(\lambda)$ generated by an extremal weight vector using the action of the Borel subalgebra $\bb = \n^+ \oplus \h$  for $\g$ of Lie type $A, B, C$ and $D$. Some steps towards the study of PBW filtrations on Demazure modules in type $A$ have been carried out in \cite{Fou1} for triangular subsets of $R^+$ and in \cite{BF} for the special case when the highest weight is a multiple of a fundamental weight. However, nothing is known about PBW filtrations of Demazure modules in other types.

We will consider Demazure modules associated to Weyl group elements $r_{\gamma}$, where $r_{\gamma}$ is a reflection at any positive root in type $A_n$, the long roots in type $C_n$, the short roots in type $B_n$ and positive roots not involving the simple root $\alpha_{n-1}$ in type $D_n$. Let $s_{\alpha_i}$ denote the reflection at the simple root $\alpha_i$. Then the elements $r_{\gamma}$ have the presentation
\[
r_{\gamma} = s_{\alpha_i} s_{\alpha_{i+1}} \cdots s_{\alpha_{n-1}} s_{\alpha_n} s_{\alpha_{n-1}} \cdots s_{\alpha_{i+1}} s_{\alpha_i}.
\]
We note that in type $A$, one has
\[
r_{\beta} = s_{\alpha_i} s_{\alpha_{i+1}} \cdots s_{\alpha_{j-1}} s_{\alpha_j} s_{\alpha_{j-1}} \cdots s_{\alpha_{i+1}} s_{\alpha_i}
\]
for any positive root $\alpha_{i,j}$. Let $R_{r_{\gamma}}^- = R^+ \cap r_{\gamma}^{-1} (R^-) \subseteq R^+$ denote the set of inversions of $r_{\gamma}$ and let $\n_{r_{\gamma}}^-$ denote the subalgebra of $\n^-$ generated by the root spaces of $- R_{r_{\gamma}}^-$. By conjugation with ${r_{\gamma}}^{-1}$, we identify $V_{r_{\gamma}}(\lambda)$ with $U(\n_{r_{\gamma}}^-) v_{\lambda}$ and study the associated graded module $\operatorname{gr} U(\n_{r_{\gamma}}^-) v_{\lambda}$ with respect to the PBW filtration on $U(\n_{r_{\gamma}}^-) v_{\lambda}$. We construct a polytope $P_{r_{\gamma}}(\lambda) \subset \mathbb{R}_{\geq 0}^{m}, ~ m = \#R_{r_{\gamma}}^-$, whose integral points label a basis of the space $\operatorname{gr} U(\n_{r_{\gamma}}^-) v_{\lambda}$. The polytope $P_{r_{\gamma}}(\lambda)$ is given by three sets of inequalities, namely, inequalities described by Dyck paths, inequalities described by degree paths and inequalities described by paths with coefficients. The inequalities described by degree paths have an upper bound given by PBW degree estimates, hence the upper bound of such an inequality is the maximum PBW degree that can be achieved in $\operatorname{gr} U(\n_{r_{\gamma}}^-) v_{\lambda}$. This degree can be computed as a sum of maximal degrees that can be achieved for each fundamental module (see \cite{CF} Theorem 5.3. (ii) and \cite{BBDF}). The inequalities described by paths with coefficients may be viewed as certain interlaces of degree inequalities and Dyck path inequalities. 
 \begin{exax}
 Let $\g = \s \mathfrak{p}_{6},~ \lambda = m_1 \m_1 + m_2 \m_2 + m_3 \m_3$ where $\m_l$ are the fundamental weights. Let $r_{\alpha_{1,\overline{1}}} = s_1 s_2 s_3 s_2 s_1$. Then the poset $R_{r_{\gamma}}^-$ is given as follows: $ \alpha_1 \rightarrow \alpha_{1,2} \rightarrow \alpha_{1,3}\rightarrow \alpha_{1,\overline{2}}\rightarrow \alpha_{1,\overline{1}} $. The polytope $P_{r_{\gamma}}(\lambda) \subset \mathbb{R}_{\geq 0}^5 $, where we put $s_{\alpha_{1,j}} = s_{1,j}, ~ s_{\alpha_{1,\overline{j}}} = s_{1,\overline{j}}$, is defined by the inequalities 
 \begin{gather*}
     s_{1,1} \leq m_1, \quad s_{1,1} + s_{1,2} \leq m_1 + m_2, \quad s_{1,1} + s_{1,2} + s_{1,\overline{1}} \leq m_1 + m_2 + m_3, \\
    s_{1,1} + s_{1,2} + s_{1,3} + s_{1,\overline{1}} \leq m_1 + m_2 + 2m_3, \quad s_{1,1} + s_{1,2} + s_{1,3} + s_{1,\overline{2}} + s_{1,\overline{1}} \leq m_1 + 2m_2 + 2m_3, \\
    2s_{1,1} + 2s_{1,2} + s_{1,3} + 2s_{1,\overline{1}} \leq 2m_1 + 2m_2 + 2m_3, \\
    2s_{1,1} + s_{1,2} + s_{1,3} + s_{1,\overline{2}} + 2s_{1,\overline{1}} \leq 2m_1 + 2m_2 + 2m_3, \\
    2s_{1,1} + 2s_{1,2} + s_{1,3} + s_{1,\overline{2}} + 2s_{1,\overline{1}} \leq 2m_1 + 3m_2 + 2m_3.
     \end{gather*}
     The first three inequalities are Dyck path inequalities, the fourth and fifth inequalities are degree inequalities and the last three are inequalities with coefficients.
 \end{exax}
 
Let $S_{r_{\gamma}}(\lambda)$ denote the set of integral points in $P_{r_{\gamma}}(\lambda)$. To every point $\textbf{s} = (s_{\alpha})_{\alpha \in R_{r_{\gamma}}^-} \in S_{r_{\gamma}}(\lambda)$, we associate an element 
\[
f^{\textbf{s}}v_{\lambda} = \prod_{\alpha \in R_{r_{\gamma}}^-}f_{\alpha}^{s_{\alpha}} v_{\lambda} \in S(\n_{r_{\gamma}}^-)v_{\lambda}.
\]
Our main result is the following theorem:
\begin{thmx}\label{maintheorem}
  Let $\g$ be a complex simple Lie algebra of type $A, B, C$ or $D$ and let 
  \[
r_{\gamma} = s_{\alpha_i} s_{\alpha_{i+1}} \cdots s_{\alpha_{n-1}} s_{\alpha_n} s_{\alpha_{n-1}} \cdots s_{\alpha_{i+1}} s_{\alpha_i}.
\]
  \begin{enumerate}
      \item For any dominant integral weights $\lambda , \mu$, we have $ S_{r_{\gamma}}(\lambda) + S_{r_{\gamma}}(\mu) = S_{r_{\gamma}}(\lambda + \mu)$ and $P_{r_{\gamma}}(\lambda)$ is a normal polytope.
      \item $S_{r_{\gamma}}(\lambda)$ labels a monomial basis of the PBW graded module $\operatorname{gr} U(\n_{r_{\gamma}}^-) v_{\lambda}$.
      \item $S_{r_{\gamma}}(\lambda)$ labels a monomial basis of $\operatorname{gr} V_{r_{\gamma}}(\lambda)$ and by choosing an order in each factor also of $V_{r_{\gamma}}(\lambda)$.
      \item The annihilating ideal of $\operatorname{gr} U(\n_{r_{\gamma}}^-) v_{\lambda}$ is a monomial ideal.
  \end{enumerate}
\end{thmx}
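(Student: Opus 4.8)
The plan is to prove all four statements at once, in the style of Feigin--Fourier--Littelmann, with the combinatorics of the three families of paths (Dyck, degree, coefficient) playing the role that Dyck paths alone play for $V(\lambda)$. Write $S = S(\n_{r_\gamma}^-)$ for the symmetric algebra, $m = \#R_{r_\gamma}^-$, and let $I_\lambda \subseteq S$ be the annihilating ideal of $v_\lambda$, so $\operatorname{gr} U(\n_{r_\gamma}^-)v_\lambda = S/I_\lambda$. Since every defining inequality of $P_{r_\gamma}(\lambda)$ has nonnegative left-hand coefficients and a right-hand side linear in the coordinates $m_i$ of $\lambda$, the set $S_{r_\gamma}(\lambda)$ is a lower set for the coordinatewise order on $\mathbb{Z}_{\ge0}^m$, and the easy inclusion $S_{r_\gamma}(\lambda)+S_{r_\gamma}(\mu)\subseteq S_{r_\gamma}(\lambda+\mu)$ of (1) is immediate. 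Let $J_\lambda\subseteq S$ be the monomial ideal spanned by all $f^{\mathbf{s}}$ with $\mathbf{s}\notin S_{r_\gamma}(\lambda)$; because $S_{r_\gamma}(\lambda)$ is a lower set this is a genuine ideal, and $S/J_\lambda$ has the tautological monomial basis indexed by $S_{r_\gamma}(\lambda)$. The whole theorem then reduces to the single equality $I_\lambda=J_\lambda$: this gives (2) and (4) simultaneously, while (3) follows because a homogeneous basis of $\operatorname{gr} U(\n_{r_\gamma}^-)v_\lambda$ lifts, after fixing an order of the commuting factors in each monomial, to a basis of the cyclic module $V_{r_\gamma}(\lambda)=U(\n_{r_\gamma}^-)v_\lambda$.

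The inclusion $J_\lambda\subseteq I_\lambda$ is the representation-theoretic heart of the argument. For each minimal element of the complement of $S_{r_\gamma}(\lambda)$ --- equivalently, for each defining inequality evaluated at its first violating value --- I would exhibit an explicit element of the annihilator whose PBW leading term is exactly the corresponding monomial $f^{\mathbf{s}}$. The basic building blocks are the classical $\mathfrak{sl}_2$-string relations $f_\beta^{\langle\lambda,\beta^\vee\rangle+1}v_\lambda=0$, which produce the Dyck-path generators, together with the maximal PBW-degree bound of \cite{CF} and \cite{BBDF}, which caps the total degree and produces the degree-path generators. The relations with coefficients would then be obtained by applying suitable raising operators from $U(\n^+)$ to these basic relations and extracting leading terms. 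This is the step I expect to be the main obstacle: one must check that the interlaced relations have genuinely monomial (not merely triangular) leading terms, and that the three families together exhaust precisely the minimal inadmissible exponent vectors, so that no monomial outside $J_\lambda$ is inadvertently forced into the annihilator.

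Granting $J_\lambda\subseteq I_\lambda$, the surjection $S/J_\lambda\twoheadrightarrow S/I_\lambda=\operatorname{gr} U(\n_{r_\gamma}^-)v_\lambda$ already shows that $\{f^{\mathbf{s}}v_\lambda:\mathbf{s}\in S_{r_\gamma}(\lambda)\}$ spans, giving $\dim\operatorname{gr} U(\n_{r_\gamma}^-)v_\lambda\le|S_{r_\gamma}(\lambda)|$. For the reverse bound I would first prove the hard half of (1), namely $S_{r_\gamma}(\lambda+\mu)\subseteq S_{r_\gamma}(\lambda)+S_{r_\gamma}(\mu)$, by an explicit greedy decomposition of a lattice point along the chain $R_{r_\gamma}^-$ (as in the Example), proceeding by induction and reducing to fundamental weights; normality of $P_{r_\gamma}(\lambda)$ follows formally. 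This Minkowski property then upgrades spanning to linear independence via the Cartan-type embedding $V_{r_\gamma}(\lambda+\mu)\hookrightarrow V_{r_\gamma}(\lambda)\otimes V_{r_\gamma}(\mu)$: on associated graded spaces the product $f^{\mathbf{s}+\mathbf{t}}(v_\lambda\otimes v_\mu)$ has leading term $f^{\mathbf{s}}v_\lambda\otimes f^{\mathbf{t}}v_\mu$, so independence for $\lambda$ and for $\mu$ forces it for $\lambda+\mu$, with the base case of fundamental weights verified directly.

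Combining the two bounds yields $\dim(S/I_\lambda)=|S_{r_\gamma}(\lambda)|=\dim(S/J_\lambda)$, establishing (2). Since both $I_\lambda$ and $J_\lambda$ are homogeneous with $J_\lambda\subseteq I_\lambda$ and the graded quotients have equal (finite) dimension in each degree, the inclusion must be an equality, $I_\lambda=J_\lambda$, which is a monomial ideal; this is (4). Finally, any $x\in I_\lambda$ splits as an admissible plus an inadmissible part, the latter lying in $J_\lambda=I_\lambda$, so the admissible part lies in $I_\lambda$ and hence vanishes by (2); thus no admissible monomial is a relation, confirming that $S_{r_\gamma}(\lambda)$ labels a basis of $\operatorname{gr} V_{r_\gamma}(\lambda)$ and, after the order choice above, of $V_{r_\gamma}(\lambda)$, which is (3).
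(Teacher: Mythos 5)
Your overall architecture coincides with the paper's: the Minkowski property reduced to fundamental weights (Proposition \ref{minkowskidecomposition}), linear independence via leading terms in the Cartan component of tensor products (Propositions \ref{ess} and \ref{lin-indep}, the ``essential vector'' argument), spanning by showing inadmissible monomials annihilate $v_{\lambda}$, and a final comparison giving that the annihilator is the monomial ideal $J_{\lambda}$. Your lower-set observation and the reduction of all four parts to $I_{\lambda}=J_{\lambda}$ are sound, and your treatment of Dyck-path and degree-path violations (string relations $f_{\beta}^{\langle\lambda,\beta^{\vee}\rangle+1}v_{\lambda}=0$ plus differential operators, and the PBW-degree cap of \cite{CF}) is exactly what the paper does in Proposition \ref{ideal}; in this setting the chain structure of $R_{r_{\gamma}}^-$ makes the differential-operator images genuinely monomial, so that part of your worry can be discharged by direct computation.

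The genuine gap is the third family of inequalities, the paths with coefficients, and you have flagged it yourself without closing it. Your proposed route --- apply raising operators from $U(\n^+)$ to the basic relations and ``extract leading terms'' --- is not carried out, and it is not the mechanism that works: a monomial $f^{\textbf{s}}$ violating only a coefficient inequality (while satisfying all Dyck and degree inequalities) is not exhibited in the paper as the leading term of any element of $U(\n^+)\circ f_{\beta}^{N+1}$, and producing an annihilator element with \emph{monomial} leading term by this method is precisely what fails to be clear. The paper instead proves $f^{\textbf{s}}v_{\lambda}=0$ for such $\textbf{s}$ by a different, representation-theoretic induction (Proposition \ref{polytopeminus} and Lemma \ref{trivialactionc}): one introduces the larger polytope $P_w'(\lambda)$ cut out by the coefficient-free inequalities only, shows it still has the Minkowski property, decomposes $\textbf{s}\in S_w'(\lambda)\setminus S_w(\lambda)$ so that some fundamental summand $\textbf{s}_d$ lies in $S_w'(\m_d)\setminus S_w(\m_d)$ (where $f^{\textbf{s}_d}v_{\m_d}=0$ by direct check), and then expands $f^{\textbf{s}}(v_{\m_d}\otimes v_{\lambda-\m_d})$, killing the cross terms by the homogeneous order together with Proposition \ref{ideal} and a finite recursion. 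Without this step (or a genuine substitute for it), your inclusion $J_{\lambda}\subseteq I_{\lambda}$ is unproven for exactly the inequalities that distinguish these Demazure polytopes from the FFLV case, and the dimension count that everything else rests on cannot start.
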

Furthermore, as shown in Proposition \ref{apoly}, we have that for any subword
\[
r_{\gamma}' = s_{\alpha_k} s_{\alpha_{k+1}} \cdots s_{\alpha_n} \cdots s_{\alpha_i},  \quad k>i
\]
$P_{r_{\gamma}'}(\lambda)$ is a face of $P_{r_{\gamma}}(\lambda)$ defined by setting all the coordinates $s_{\alpha} = 0$ for every $\alpha \notin R_{r_{\gamma}}^-$.

We remark here that unlike the case of triangular subsets in type $A$, our polytopes are in general not faces of the FFLV polytope $P(\lambda)$. For example, if $\g = \s \lie{l}_4$ and $ r_{\alpha_{1,3}} = s_1s_2s_3s_2s_1$, we obtain $R_{r_{\gamma}}^- = \{ \alpha_{1}, \alpha_{1,2}, \alpha_{1,3}, \alpha_{2,3}, \alpha_4 \} $. For $\lambda = m_1 \m_1 + m_2 \m_2 + m_3 \m_3$, the associated polytope is given by the inequalities
\begin{gather*}
    s_{1,1} \leq m_1, ~ s_{1 + s_{1,2}} \leq m_1 + m_2, ~ s_{2,3} + s_{3,3} \leq m_2 + m_3,~ s_{3,3} \leq m_3, \\
    s_{1,1} + s_{1,2} + s_{1,3} + s_{3,3} \leq m_1 + m_2 + m_3,~  s_{1,1} + s_{1,3} + s_{2,3} + s_{3,3} \leq m_1 + m_2 + m_3, \\
    s_{1,1} + s_{1,2} + s_{1,3} + s_{2,3} + s_{3,3} \leq m_1 + 2m_2 + m_3.
\end{gather*}
For the set of integral points, we get for example
\[
S_{r_{\gamma}}(\m_2) = \left\{ \begin{matrix}
0 & & \\
0 & & \\
0 & 0 & 0
\end{matrix} , \quad \begin{matrix}
0 & & \\
1 & & \\
0 & 0 & 0
\end{matrix}, \quad \begin{matrix}
0 & & \\
0 & & \\
1 & 0 & 0
\end{matrix}, \quad \begin{matrix}
0 & & \\
0 & & \\
0 & 1 & 0
\end{matrix}, \quad \begin{matrix}
0 & & \\
1 & & \\
0 & 1 & 0
\end{matrix} \right\} .
\]
Then the last point is not a point in $P(\lambda)$.

Let $\mathbb{U}, \mathfrak{N}, M$ and $v_M$ denote a complex unipotent algebraic group, the corresponding Lie algebra Lie$(\mathbb{U}) = \mathfrak{N}$, a cyclic finite dimensional vector space on which $\mathbb{U}$ acts and the cyclic vector $v_M$ respectively. We have $M = U(\mathfrak{N})v_M$. Fix a total order $\succ $ on the set of generators $\{ f_1 , \ldots , f_N \}$ of $\mathfrak{N}$ and consider the induced homogeneous monomial order on the monomials in $U(\mathfrak{N})$. In \cite{FFLb}, the authors introduced the notion of a favourable module. $M$ is called favourable if it has a monomial basis consisting of essential vectors (see Subsection \ref{linearindependnce} for the definition of essential vectors) labelled by a normal polytope. Because of part (1) and (3) of the theorem, we obtain the following corollary.

\begin{corx}
$V_{r_{\gamma}}(\lambda)$ is a favourable module.
\end{corx}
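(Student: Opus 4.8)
The plan is to verify directly that the data $(\mathfrak{N},M,v_M)=(\n_{r_{\gamma}}^-,\,V_{r_{\gamma}}(\lambda),\,v_{\lambda})$, equipped with the total order $\succ$ on the root vectors $\{f_\alpha : \alpha\in R_{r_{\gamma}}^-\}$ and the polytope $P_{r_{\gamma}}(\lambda)$, satisfies every clause in the definition of a favourable module from \cite{FFLb}. Recall from Subsection \ref{linearindependnce} that a monomial $f^{\mathbf{s}}$ is \emph{essential} for $M$ if $f^{\mathbf{s}}v_{\lambda}$ does not lie in the span of $\{f^{\mathbf{q}}v_{\lambda} : \mathbf{q}\prec\mathbf{s}\}$, and that $M$ is favourable precisely when the set $\mathrm{es}(M)$ of essential exponents coincides with the lattice points of a normal polytope and this polytope behaves multiplicatively under Minkowski sums. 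First I would observe that $V_{r_{\gamma}}(\lambda)=U(\n_{r_{\gamma}}^-)v_{\lambda}$ is a cyclic module over the enveloping algebra of $\mathfrak{N}=\n_{r_{\gamma}}^-$, which is exactly the abstract setting of the definition with $N=m=\#R_{r_{\gamma}}^-$.

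By part (3) of the main theorem, the vectors $\{f^{\mathbf{s}}v_{\lambda} : \mathbf{s}\in S_{r_{\gamma}}(\lambda)\}$ form a basis of $V_{r_{\gamma}}(\lambda)$; in particular $\dim V_{r_{\gamma}}(\lambda)=\#S_{r_{\gamma}}(\lambda)$. The one point requiring care, and the main (if modest) obstacle, is to identify these basis vectors with the essential ones, that is, to establish $\mathrm{es}(M)=S_{r_{\gamma}}(\lambda)=P_{r_{\gamma}}(\lambda)\cap\mathbb{Z}^m$. Since essentiality is defined relative to the homogeneous monomial order induced by $\succ$, I must ensure that the basis produced in part (3) is the one selected by passing to leading terms with respect to $\succ$. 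This is already built into the proof of part (3): linear independence of the $f^{\mathbf{s}}v_{\lambda}$ is established via distinct leading terms, while the spanning (straightening) step rewrites every $f^{\mathbf{q}}v_{\lambda}$ with $\mathbf{q}\notin S_{r_{\gamma}}(\lambda)$ as a combination of strictly $\succ$-smaller monomials. Together these two facts say exactly that a monomial is essential if and only if its exponent lies in $S_{r_{\gamma}}(\lambda)$, giving $\mathrm{es}(M)=P_{r_{\gamma}}(\lambda)\cap\mathbb{Z}^m$.

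Finally I would invoke part (1): $P_{r_{\gamma}}(\lambda)$ is normal and $S_{r_{\gamma}}(\lambda)+S_{r_{\gamma}}(\mu)=S_{r_{\gamma}}(\lambda+\mu)$ for all dominant integral $\lambda,\mu$. Setting $\mu=(k-1)\lambda$ and iterating yields the $k$-fold Minkowski identity $k\,S_{r_{\gamma}}(\lambda)=S_{r_{\gamma}}(k\lambda)=kP_{r_{\gamma}}(\lambda)\cap\mathbb{Z}^m$. Applying the previous step to the weight $k\lambda$ in place of $\lambda$ also gives $\mathrm{es}(V_{r_{\gamma}}(k\lambda))=S_{r_{\gamma}}(k\lambda)$, so that $\mathrm{es}$ of the $k$-th power equals the $k$-fold Minkowski sum of $\mathrm{es}(M)$, which is the multiplicativity clause of the definition. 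Combined with the normality of $P_{r_{\gamma}}(\lambda)$ and the identification $\mathrm{es}(M)=S_{r_{\gamma}}(\lambda)$, all requirements of \cite{FFLb} hold, and hence $V_{r_{\gamma}}(\lambda)$ is favourable.

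I expect this argument to be short, since all the substantive work is carried by parts (1) and (3) of the main theorem; no additional flatness or degeneration input is needed, as the toric degeneration, Newton–Okounkov body, and related consequences follow formally from favourability in \cite{FFLb}. The only genuine check is the bookkeeping in the second paragraph, confirming that the monomial basis of part (3) is precisely the basis of essential vectors for the fixed order $\succ$.
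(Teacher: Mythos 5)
Your proposal is correct and follows the paper's own route: the paper deduces favourability directly from parts (1) and (3) of the main theorem, with the identification of the monomial basis as essential vectors already supplied by Proposition \ref{lin-indep} (essentiality of the $f^{\textbf{s}}v_{\lambda}$, $\textbf{s}\in S_{r_{\gamma}}(\lambda)$) and the Minkowski/normality clause by Lemma \ref{minkowski1}. Your second paragraph merely makes explicit the bookkeeping that the paper leaves implicit in its one-line derivation.
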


 The property favourable implies we get certain interesting geometric properties of the corresponding projective varieties, namely, we obtain flat degenerations of Schubert varieties to PBW degenerate varieties and since we have a monomial ideal, this degeneration is a toric degeneration. Moreover, it is both arithmetically Cohen-Macaulay and projectively normal.

The paper is organised as follows: In Section \ref{section1} we give basic definitions. In Sections \ref{section2}, \ref{section3}, \ref{section4} and \ref{section5}, we define the polytopes $P_{r_{\gamma}}(\lambda)$ for type $A, C, B$ and $D$ respectively. In Section \ref{section6}, we prove the Minkowski property and in Section \ref{section7}, we prove the main theorem.

\section{Definitions}\label{section1}
 \subsection{PBW filtrations} 
Let $\g$ be a complex simple Lie algebra and let $\g = \n^+ \oplus \h \oplus \n^-$ be a triangular decomposition. For a dominant integral weight $\lambda$, we denote by $V(\lambda)$ a highest weight module of $\g$ of highest weight $\lambda$ generated by a highest weight vector $v_{\lambda}$. Then we have $V(\lambda) = U(\n^-)v_{\lambda}$ where $U(\n^-)$ is the universal enveloping algebra of $\n^-$. The PBW filtration on $U(\n^- ) $ is defined by
 \[
 U_s(\n^-) := \langle x_{i_1} \cdots x_{i_l} : x_{i_j} \in \n^- , l \leq s \rangle .
 \]
  We obtain an induced filtration on $V(\lambda)$ as follows:
 \[
 V_s(\lambda) := U(\n^-)_sv_{\lambda}.
 \]
 The associated graded module gr $V(\lambda)$ is a module for the abelianised algebra $\g^a = \n^+ \oplus \h \oplus \n^{-,a}$ which is the Lie algebra having the same vector space structure as $\g$ but whose Lie bracket is trivial for the elements of $\n^{-}$. We have gr $V(\lambda) = S(\n^-) v_{\lambda}$ where $S(\n^-)$ is the symmetric algebra of $\n^-$. Then one has $S(\n^-)v_{\lambda} \simeq S(\n^-)/I(\lambda)$ where $I(\lambda) \subset S(\n^-)$ is the annihilating ideal. 
 
 \subsection{Demazure modules} Let $\bb = \n^+ \oplus \mathfrak{h}$ be a Borel subalgebra of $\g$. The weight space $V(\lambda)$ decomposes as a sum of $\h$-weight spaces $V(\lambda)_{\mu}$. The Weyl group $W$ of $\g$ acts on the weights of $V(\lambda)$. For any $w \in W$,  the weight space of weight $w(\lambda)$ generated by $v_{w(\lambda})$ is one-dimensional.

\begin{defn} The \emph{Demazure module} associated to $w \in W$ is defined as  
\[
V_w(\lambda) := U(\bb)v_{w(\lambda)} \subseteq V(\lambda).
\]
\end{defn}

The two extreme cases are: if $w=\operatorname{id}$, then $V_w(\lambda) = \mc v_{\lambda}$ and if $w = w_0$, the longest Weyl group element, then $V_{w_0}(\lambda) = V(\lambda)$. The set of positive roots and negative roots is denoted $R^+$ and $ R^-$ respectively.  To each $w \in  W$, we associate the following subset of roots:
\[
R_w^- := R^+ \cap w^{-1} (R^-).
\]
Let $\n_w^-$ denote the subalgebra of $\n^-$ generated by the root vectors corresponding to the roots in $-R_w^-$, i.e.
\[
\n_{w}^- := \langle f_{\alpha} : \alpha \in R_w^- \rangle \subseteq \n^-.
\] 
 The Weyl group $W$ acts on $U(\g)$ as well as on $V(\lambda)$. We may consider 
\[
w^{-1} (V_w(\lambda))w = w^{-1}(U(\bb)w w^{-1}v_{w(\lambda)})w.
\]

We have $w^{-1}\bb w \subset \n_w^{-} \oplus \n^+$ and $w^{-1} v_{w(\lambda) } w= v_{\lambda}$. Then 
\[
V_w(\lambda) = w(U(\n_w^-) v_{\lambda})w^{-1}
\]
 and in this way, the Demazure module is conjugated to $U(\n_w^-)v_{\lambda}$. We consider the PBW filtration on $U(\n_w^-)v_{\lambda}$ and study the associated graded space $\text{gr }U(\n_w^-)v_{\lambda} \simeq S(\n_w^-)v_{\lambda}$.
 Our main goal is to construct a monomial basis of gr $U(\n_w^-)v_{\lambda}$ labelled by the lattice points of a normal polytope and to give a description of the annihilating ideal $I(\lambda) \subset S(\n_w^-)$.

\subsection{\texorpdfstring{The elements $r_{\gamma}$}{The elements s{gamma}}}
Let $\alpha_i$ denote the simple roots of $\g$. All positive roots of $\g$ are summarised in the table below.

\[
\begin{array}{|c|c|}
\hline
  \operatorname{Type ~ of }~ \g   & \operatorname{Positive ~ roots} \\
  \hline
  A_n   & \alpha_{i,j} = \alpha_i + \cdots + \alpha_j, ~1 \leq i \leq j \leq n \\
  \hline
  B_n & \alpha_{i,j} = \alpha_i + \cdots + \alpha_j, 1 \leq i \leq j \leq n \\
   & \alpha_{i,\bar{j}} = \alpha_i + \cdots + \alpha_{j-1} + 2\alpha_{j} + \cdots + 2\alpha_n, ~2 \leq j \leq n \\
   \hline
   C_n & \alpha_{i,j} = \alpha_i + \cdots + \alpha_j, 1 \leq i \leq j \leq n \\
   & \alpha_{i,\bar{j}} = \alpha_i + \cdots + \alpha_{j-1} + 2\alpha_{j} + \cdots + 2\alpha_{n-1} + \alpha_n, ~1\leq i \leq j \leq n \\
   \hline
   D_n & \alpha_{i,j} = \alpha_i + \cdots + \alpha_j, 1 \leq i \leq j \leq n-1 \\
   & \alpha_{i,\bar{j}} = \alpha_i + \cdots + \alpha_{n-2} + \alpha_{j} + \cdots + \alpha_n, ~1 \leq i < j \leq n \\
   \hline
\end{array}
\]

Let $\gamma$ denote
\begin{itemize}
    \item any positive root $\alpha_{i,j}$ in type $A_n$,
    \item any short root $\alpha_{i,\bar{i}}$ in type $B_n$,
    \item any long root $\alpha_{i,\bar{i}}$ in type $C_n$ and
    \item any root $\alpha_{i,\bar{n}}$ in type $D_n$.
\end{itemize}
The corresponding reflection at the positive root $\gamma$ will be denoted $r_{\gamma}$. Proposition \ref{reflection} is a direct consequence of the following well known fact about reflection groups. For any root $\beta$ of $\g, ~r_{\beta} = v s_{\alpha} v^{-1}$ for any Weyl group element $v$ and simple reflection $s_{\alpha}$ such that $\beta = v(\alpha)$.

\begin{prop}\label{reflection}
$r_{\gamma} = s_{\alpha_i} s_{\alpha_{i+1}} \cdots s_{\alpha_{n-1}} s_{\alpha_n} s_{\alpha_{n-1}} \cdots s_{\alpha_{i+1}} s_{\alpha_i}.$
\end{prop}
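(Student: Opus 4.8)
The plan is to realise $\gamma$ as $v(\alpha_n)$ for an explicit Weyl group element $v$ and then apply the conjugation identity $s_{v(\alpha_n)} = v\, s_{\alpha_n}\, v^{-1}$ recalled just before the statement. I would take
\[
v := s_{\alpha_i} s_{\alpha_{i+1}} \cdots s_{\alpha_{n-1}},
\]
so that $v^{-1} = s_{\alpha_{n-1}} \cdots s_{\alpha_{i+1}} s_{\alpha_i}$ and $v\, s_{\alpha_n}\, v^{-1}$ is precisely the word on the right-hand side of the proposition. Since $r_\gamma = s_\gamma$ by definition, everything then reduces to verifying the single identity $v(\alpha_n) = \gamma$ in each of the four types; once this is in hand, $r_\gamma = s_{v(\alpha_n)} = v\, s_{\alpha_n}\, v^{-1}$ closes the argument.

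To check $v(\alpha_n) = \gamma$ I would pass to the standard $\ve$-coordinate model of each root system and track the image of $\alpha_n$ under $s_{\alpha_{n-1}}, s_{\alpha_{n-2}}, \ldots, s_{\alpha_i}$ applied in this right-to-left order, using that each $s_{\alpha_k}$ with $k < n$ acts as the transposition interchanging $\ve_k$ and $\ve_{k+1}$. This makes every case a short telescoping computation. In type $A_n$, with $\alpha_n = \ve_n - \ve_{n+1}$, the successive swaps push the first index from $n$ down to $i$ while fixing $\ve_{n+1}$, giving $\ve_i - \ve_{n+1} = \alpha_{i,n} = \gamma$. In type $C_n$, where $\alpha_n = 2\ve_n$, the same telescoping yields $2\ve_i = \alpha_{i,\overline i} = \gamma$, and in type $B_n$, where $\alpha_n = \ve_n$, it yields $\ve_i = \alpha_{i,n} = \gamma$.

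The one case deserving a little extra care is type $D_n$, where $\alpha_n = \ve_{n-1} + \ve_n$: the first reflection $s_{\alpha_{n-1}}$ swaps $\ve_{n-1}$ and $\ve_n$ and hence fixes $\alpha_n$, after which the remaining reflections $s_{\alpha_{n-2}}, \ldots, s_{\alpha_i}$ carry $\ve_{n-1}$ down to $\ve_i$ and leave $\ve_n$ alone, so $v(\alpha_n) = \ve_i + \ve_n = \alpha_{i,\overline n} = \gamma$. There is no genuine obstacle here: the conjugation identity applies verbatim (it only needs $v(\alpha_n^\vee) = \gamma^\vee$, which is automatic once $v(\alpha_n) = \gamma$), so the main task is purely to organise the four index-bookkeeping computations uniformly and to be attentive to the fixed-point behaviour of $s_{\alpha_{n-1}}$ in type $D$.
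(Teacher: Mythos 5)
Your proof is correct and takes essentially the same approach as the paper: the paper's argument consists precisely of the conjugation identity $s_{v(\alpha)} = v\, s_{\alpha}\, v^{-1}$ applied with $v = s_{\alpha_i} s_{\alpha_{i+1}} \cdots s_{\alpha_{n-1}}$, with the verification that $v(\alpha_n) = \gamma$ left implicit. Your explicit $\ve$-coordinate computations in the four types (including the observation that $s_{\alpha_{n-1}}$ fixes $\alpha_n$ in type $D$) simply supply the bookkeeping the paper omits.
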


Note that in type $A_n$, we have
\[
r_{\alpha_{i,j}} = s_{\alpha_i} s_{\alpha_{i+1}} \cdots s_{\alpha_{j-1}} s_{\alpha_j} s_{\alpha_{j-1}} \cdots s_{\alpha_{i+1}} s_{\alpha_i} \quad \text{for any} \quad \alpha_{i,j}.
\]

To simplify notation, we shall denote $s_{\alpha_i} =: s_i$ for the simple reflections. For the construction of our polytopes and proofs, we will focus on the Weyl group element 
\[
r_{\gamma} = s_1 s_2 \cdots s_{n-1} s_n s_{n-1} \cdots s_2 s_1.
\]
Hence, from now onwards, we set $w := r_{\gamma} = s_1 s_2 \cdots s_{n-1} s_n s_{n-1} \cdots s_2 s_1$ unless stated otherwise. We prove in Proposition \ref{apoly} that our methods easily generalise to other Weyl group elements $s_i s_{i+1} \cdots s_{n-1} s_n s_{n-1} \cdots s_{i+1} s_i$. We will use the following partial order on the set of positive roots,
\[
\alpha_{i_1, j_1} \geq \alpha_{i_2,j_2} \Leftrightarrow i_1 \leq i_2 \wedge j_1 \leq j_2, \quad i_1,~ j_1, ~i_2, ~j_2 \in J
\]
 where $J$ is an indexing set. The partial order on $R_w^-$ is obtained by restricting $\geq$ to the elements of $R_w^-$.
 
\section{Type A}\label{section2} 
Let $\alpha_i, ~\m_i, ~i = 1, \ldots, n$ be the simple roots and fundamental weights of $ \s \mathfrak{l}_{n+1}=\n^+ \oplus \mathfrak{h} \oplus \n^-$. The Killing form on $\mathfrak{h}^*$ is denoted $(\cdot,\cdot)$. In particular, one has $(\m_j, \alpha_i) = \delta_{i,j}$. Recall that all positive roots $\alpha_{i,j}$ of $\s \mathfrak{l}_{n+1}$ satisfy
\[
\alpha_{i,j} = \alpha_i + \alpha_{i+1} + \cdots + \alpha_j, \quad 1 \leq i \leq j \leq n.
\]

For every positive root $\alpha_{i,j}$, we fix a non-zero element $f_{\alpha_{i,j}} \in \n^-$, where $f_{\alpha_{i,j}} = E_{j+1,i}$.

\subsection{Paths and polytopes}
Following \cite{FFLa1}, we would like to define polytopes using paths. Let
\[ 
w = s_{k-l} s_{k-l+1} \ldots s_{k-1} s_{k} s_{k-1} \ldots s_{k-l+1} s_{k-l}
\]
 where $3 \leq k \leq n$ and $1 \leq l \leq k-2$.  The corresponding set of roots and poset structure is given below:
 
 \small
 \[
\begin{matrix}
\alpha_{k-l} & & & & & & & & & \\
\downarrow & & & & & & & & & \\
 \alpha_{k-l,k-l+1} & &  & & & & & & & \\
 \downarrow & & & & & & & & & \\
 \vdots  &   &  &  &  &  & & & &   \\
\downarrow & & & & & & & & & \\
\alpha_{k-l,k-1} &  &   &   &  &  & & & & \\
\downarrow & & & & & & & & & \\
\alpha_{k-l,k} & \rightarrow & \alpha_{k-l+1,k} & \rightarrow & \ldots & \rightarrow & \alpha_{k-1,k} & \rightarrow & \alpha_{k} &  
\end{matrix}
\]
\normalsize

Before defining paths in our poset, we first recall the definition of triangular subsets of the set of positive roots $R^+$ due to \cite{Fou1}. A subset $A \subset R^+$ is called triangular if and only if for all $\alpha_{i_1, j_1} > \alpha_{i_2,j_2}$ with $i_2 \leq j_1 +1$, the root $\alpha_{i_1,j_2} \in A$. Furthermore, if $i_2 \leq j_1$, then the root $\alpha_{i_2,j_1} \in A$.
We use two types of paths to define our polytopes.

\begin{defn} \label{Path} Let $\textbf{p} = (\beta_1 \geq \cdots \geq \beta_s)$ be a sequence of positive roots such that $\beta_i \in R_w^-$.
\begin{itemize}
    \item $\textbf{p}$ is called a \emph{Dyck path} if $\{ \beta_1, \ldots , \beta_s \}$ is a triangular subset of $R^+$.
    \item $\textbf{p}$ is called a \emph{degree path} if $\{ \beta_1, \ldots , \beta_s \}$ is not a triangular subset of $R^+$.
\end{itemize}
\end{defn}

We consider the following Dyck paths:
\begin{gather*}
    (\alpha_{k-l}, \ldots , \alpha_{k-l,j}),  \quad k-l \leq j \leq k-1; \quad (\alpha_{k-l}, \ldots , \alpha_{k-l,k} , \alpha_k) ; \\
    (\alpha_{i,k} , \ldots , \alpha_{k}), \quad k-l+1 \leq i \leq k ; \quad (\alpha_{k-l}, \alpha_{k-l,k}, \ldots , \alpha_k) ; \\
    (\alpha_{k-l}, \alpha_{k-l,k-l+1}, \ldots, \alpha_{k-l,j}, \alpha_{i,k}, \alpha_{i+1,k}, \ldots, \alpha_k), \quad j \in \{ k-l , \ldots , t , k \}, \quad i \in \{t+1, \ldots , k \},
\end{gather*}

where $k-l+1 \leq t \leq k-2$. For the degree paths, we make the following choice:
\begin{equation}\label{extrapath}
(\alpha_{k-l}, \ldots , \alpha_{k-l,r}, \alpha_{k-l,k}, \alpha_{i,k}, \ldots , \alpha_k), \text{ for } k-l+1 \leq i \leq k-1, \quad i \leq r \leq k-1.
\end{equation}

Let $\lambda = \sum_{i=1}^n m_i \m_i$ be a dominant integral weight. To every Dyck path $\textbf{p} = (\beta_{1} , \ldots , \beta_s )$, we associate an inequality
\begin{equation}\label{ineq1}
s_{\beta_1} + \cdots s_{\beta_s} \leq m_i + m_{i+1} + \cdots + m_{q},
\end{equation}

where $\beta_1 = \alpha_{i,j}$ and $\beta_s = \alpha_{p,q}$. To every degree path \ref{extrapath}, we associate an inequality
\begin{equation}\label{ineq-2}
 s_{k-l} + \ldots + s_{k-l,r} + s_{k-l,k} + s_{i,k} + \ldots + s_k \leq m_{k-l} + \ldots + m_k + \sum_{j=i}^r m_j
\end{equation}

where $k-l+1 \leq i \leq k-1, \quad i \leq r \leq k-1$.

\begin{defn}
 Let $P_w(\lambda) \subset \mathbb{R}_{\geq0}^{\# R_w^-}$ be the polytope given by the inequalities \ref{ineq1} and \ref{ineq-2}. 
\end{defn}

\begin{rem}
%Let us explain here our choice for Dyck paths and degree paths. First note that our choice of Dyck paths and degree paths gives a complete set of pairwise distinct maximal paths (maximal in terms of the number of roots appearing in the path) such that the upper bound of the corresponding inequality is as in \ref{ineq1} and \ref{ineq-2}. If $\textbf{p}$ is any path, then $\textbf{p}$ satisfies one of the conditions of Definition \ref{Path} and hence the set of roots appearing in $\textbf{p}$ is a subset of at least one of the maximal paths. 
We note that subsets of our choice of Dyck paths and degree paths also satisfy the conditions of Definition \ref{Path} and one can write down an inequality corresponding to these paths. However, this inequality would be redundant for the definition of the polytope. 
\end{rem}

\begin{rem} \label{idealremmark}
For a vector $f_{\beta_1}^{a_1} \cdots f_{\beta_s}^{a_s}v_{\lambda} \in \operatorname{gr} V(\lambda) $, the PBW-degree of the vector is defined as the sum of all the exponents $a_i$. By degree of $\operatorname{gr} V(\lambda)$, we mean the maximal PBW-degree that can be achieved in $\operatorname{gr} V(\lambda)$. In \cite{CF}, see Theorem 5.3. (ii), it is proved that for any dominant integral weights $\lambda , \mu$, $\deg ( \operatorname{gr} V(\lambda + \mu)) =  \deg(\operatorname{gr}V(\lambda)) +\deg (\operatorname{gr} V(\mu))$. We will frequently use the following fact: for any dominant integral weight $\lambda = \sum_{j=1}^n m_j \m_j $,
\[
\deg (\operatorname{gr} V(\lambda)) = \sum_{j=1}^n \left( \underbrace{ \deg (\operatorname{gr} V(\m_j)) + \cdots +  \deg (\operatorname{gr} V(\m_j))}_{m_j \text{-times}} \right) .
\]
Recall the notation $U(\n_w^-)$ for the universal enveloping algebra of $\n_w^- := \langle f_{\alpha} : \alpha \in R_w^- \rangle \subseteq \n^-$. Looking back at our poset $R_w^-$, one sees immediately that
\[
\deg (S(\n_w^-)(m_d \m_d)) = \begin{cases} 2m_d  & \text{for } k-l+1 \leq d \leq k-1, \\
m_d & \text{otherwise.}
\end{cases}
\]
Set $s_{\alpha} = 0$ if $\alpha \notin \textbf{p}$. Then a straightforward computation shows that the right hand side of the inequalities \ref{ineq-2} is the maximum PBW-degree that can be achieved for any multi-exponent $(s_{\alpha})_{\alpha \in R_w^-}$.
\end{rem}

\begin{rem}
Let $w = w_0$, the longest Weyl group element so that $R_w^- = R^+$. Let $\lambda = \m_d$. We have $ V(\m_d) = \bigwedge^d \mc^{n+1}$. The non-trivial operators $f_{i,j}:=f_{\alpha_{i,j}}$ act on $\mc^{n+1}$ with standard basis $(e_i)_{i=1, \ldots, n+1}$ by the usual formula $f_{i,j}e_l = \delta_{i,l} e_{j+1}$. The highest weight vector $v_{\m_d}$ can be chosen to be $e_1 \wedge \cdots \wedge e_d$. Now consider the associated graded module $\operatorname{gr} V(\lambda) = S(\n^-)(e_1 \wedge \cdots \wedge e_d)$. The operators $f_{i,j}$ act trivially on $e_1 \wedge \cdots \wedge e_d$ unless $(\m_d, \alpha_{i,j}) =1$ (or equivalently $i \leq d \leq j$). Suppose $f_{i_1,j_1},~f_{i_1,j_2},~f_{i_2,j_1},~f_{i_2,j_2} \in S(\n^-)$ act non-trivially on the highest weight vector. Then the vectors $f_{i_1,j_1}f_{i_2,j_2} e_1 \wedge \cdots \wedge e_d$ and $f_{i_1,j_2} f_{i_2,j_1} e_1 \wedge \cdots \wedge e_d$ are linearly dependent in $S(\n^-)(e_1 \wedge \cdots \wedge e_d)$. For constructing the basis, we only need to choose one of them. The definition of the FFLV polytope \cite{FFLa1} implies the latter choice. In our case, we only have one choice, namely the former since $\alpha_{i_2,j_1} \notin R_w^-$. Therefore we must impose this on the inequalities defining our polytope. 
\end{rem}

In the following, let $S_w(\lambda)$ and $R_i$ denote the subsets
\[
S_w(\lambda) = P_w(\lambda) \cap \mathbb{Z}_{\geq0}^{\#R_w^-} \quad \text{and} \quad R_i = \{ \alpha \in R_w^- | (\m_i , \alpha) = 1 \}.
\]
For every $\alpha \in R_i$, we denote by $\textbf{m}_{\alpha}$ the multi-exponent $(m_{\alpha})_{\alpha \in R_w^-} \in \mathbb{Z}_{\geq0}^{\#R_w^-}$ defined by $m_{\alpha} = 1$ and zero otherwise. We end this section with the following corollary that follows immediately from the definition of $P_w(\lambda)$.

\begin{cor}\label{points2}
The following points are in $S_w(\m_i)$:
\begin{itemize}
    \item $\textbf{m}_{\alpha}$ for every $\alpha \in R_i$;
    \item $\textbf{m}_{\alpha} + \textbf{m}_{\alpha'}$ where $\alpha = \alpha_{k-l,p},~\alpha' = \alpha_{q,k},~ i \leq p \leq k-1,~2 \leq q \leq i$, for every $\alpha, \alpha' \in R_i$. 
\end{itemize}
\end{cor}

 \section{Type C}\label{section3}
Let $\alpha_i, ~\m_i, ~i=1, \ldots, n$ be the simple roots and fundamental weights of  $ \s \mathfrak{p}_{2n} = \n^+ \oplus \mathfrak{h} \oplus \n^-$. The killing form on $\mathfrak{h}^*$ is denoted $(\cdot, \cdot)$. The positive roots of $\g $ can be divided into two groups as follows:
\begin{gather*}
    \alpha_{i,j} = \alpha_i + \alpha_{i+1} + \cdots + \alpha_j, 1 \leq i \leq j \leq n, \\
    \alpha_{i,\overline{j}} = \alpha_i + \alpha_{i+1} + \cdots + \alpha_{j-1} + 2 \alpha_{j} \cdots + 2 \alpha_{n-1} + \alpha_n, 1 \leq i \leq j \leq n.
\end{gather*}
Note that we also have the equality $\alpha_{1,n} = \alpha_{1,\overline{n}}$. Let $w = s_1s_2 \cdots s_n \cdots s_2 s_1$, then the root poset of the roots in $R_w^-$ is given by
\[
  \alpha_1 \rightarrow \alpha_{1,2} \rightarrow \ldots \rightarrow \alpha_{1,n} \rightarrow \alpha_{1,\overline{n-1}} \rightarrow \cdots \rightarrow \alpha_{1,\overline{2}} \rightarrow \alpha_{1,\overline{1}} 
\]

 For every positive root $\alpha$, we fix a non-zero element $f_{\alpha} \in \n^-$. The elements $f_{\alpha}$ are given by
 \begin{gather*}
     f_{i,j} = E_{j+1,i} - E_{n+j+1,n+i}, \quad 1 \leq i \leq j \leq n, \\
      f_{i,\overline{j}} = E_{n+i,j} + E_{n+j,i}, \quad 1 \leq i < j \leq n, \\
      f_{i, \overline{i}} = E_{n+i,i}
 \end{gather*}
 
Next, we define a total order on the generators $f_{\alpha}, \alpha \in R_w^-$ of $S(\n_w^-)$ as follows:
 \[
 f_{1,\overline{1}} \succ f_{1,\overline{2}} \succ \cdots \succ f_{1,n} \succ \cdots \succ f_{1}.
 \]  The induced graded lexicographical order on the monomials will be denoted by the same symbol. (Note that the total order $\succ$ is defined in \cite{FFLa2} on the generators of $S(\n^-)$. Here we restrict to the generators of the subalgebra).  
 
  As before we define polytopes using paths. We have the following version of paths for type $C$.
 
 \begin{defn}\label{Cnpaths} A \emph{Dyck path} is a sequence of roots of the form
 \begin{gather}
     (\alpha_1, \ldots , \alpha_{1,j}),~ 1 \leq j \leq n-1, \quad (\alpha_1, \ldots , \alpha_{1,n-1}, \alpha_{1,\overline{1}}).
 \end{gather}
 A \emph{degree path} is a sequence of roots of the form
 \begin{equation}\label{degreepathc}
     (\alpha_1 , \ldots , \alpha_{1, \overline{j}} , \alpha_{1,\overline{1}}),~ 2 \leq j \leq n.
 \end{equation}
 \end{defn}
 
Let $\lambda = \sum_{j=1}^{n} m_j \m_j $ be an $\s \mathfrak{p}_{n}$-dominant integral weight. To the paths defined above, we associate the following inequalities respectively:
 \begin{gather} \label{ineqc}
      s_{1,1} + \cdots + s_{1,j} \leq m_1 + \cdots + m_j, \quad s_{1,1} + \cdots + s_{1,n-1} + s_{1,\overline{1}} \leq m_1 + \cdots + m_n \\
    s_{1,1} + \cdots + s_{1, \overline{j}} + s_{1,\overline{1}} \leq m_1 + \cdots + 2m_j + \cdots + 2m_n.
 \end{gather}
 
Before we can define the polytopes $P_w(\lambda)$, we need one more set of inequalities, namely, inequalities with coefficients. 

\begin{defn}\label{coefficientsc}
 Let $\textbf{p}$ be a degree path as in Definition \ref{Cnpaths}. We associate to $\textbf{p}$ tuples $t_{\textbf{p}}^k$ labelled by a number $k \in \{ j-1, j, \ldots, n-1 \}$. For every number $k$, the tuple $t_{\textbf{p}}^k$ is defined as follows: we set $t_{\alpha_{1,1}} = \cdots = t_{\alpha_{1,k}} = t_{\alpha_{1,\overline{1}}} = 2$ and $t_{\alpha} = 1$ otherwise. We call the elements of each tuple $t_{\textbf{p}}^k$ \emph{coefficients} of the degree path $\textbf{p}$. The assignment of the coefficients is done component-wise, i.e. $\alpha \in \textbf{p}$ is assigned the coefficient $t_{\alpha} \in t_{\textbf{p}}^k$. The resulting path is called a \emph{path with coefficients}. 
\end{defn}

Let $q_{\textbf{p}}^{\lambda}$ denote the right hand side of the inequality given by a path $\textbf{p}$. To every path with coefficients, we associate an inequality
\begin{equation}\label{inequalitywithcoefficients}
    2s_{1,1} + \cdots + 2 s_{1,k} + s_{1,k+1} + \ldots + s_{1,\overline{j}} + 2s_{1,\overline{1}} \leq  q_{\textbf{p}}^{\lambda} + \sum_{i=1}^k m_i.
\end{equation}

\begin{defn}
 Let $P_w(\lambda)$ be the polytope given by the inequalities \ref{ineqc}- \ref{inequalitywithcoefficients}.
\end{defn}
 
\begin{exa}
In Table \ref{coefficientssp4}, we present a complete list of paths with coefficients associated to every degree path for $\g = \s \mathfrak{p}_8$. We also give for every path with coefficients the corresponding inequality.
\end{exa}

\begin{table}
    \centering
     \resizebox{\textwidth}{!}{%
   $ \begin{array}{|c|c|c|}
   \hline
    \text{Degree path}     & \text{Coefficients} & \text{Inequality} \\
    \hline
      ( \alpha_1, \alpha_{1,2}, \alpha_{1,3}, \alpha_{1,4}, \alpha_{1,\bar{1}})   & (2,2,2,1,2) & 2s_{1,1} + 2s_{1,2} + 2s_{1,3} + s_{1,4} + 2s_{1,\overline{1}} \leq q_{\textbf{p}}^{\lambda} + \sum_{i=1}^3 m_i \\
     \hline
     ( \alpha_1, \alpha_{1,2}, \alpha_{1,3}, \alpha_{1,4},  & (2,2,2,1,1,2) & 2s_{1,1} + 2s_{1,2} + 2s_{1,3} + s_{1,4} + s_{1,\overline{3}} + 2s_{1,\overline{1}} \leq q_{\textbf{p}}^{\lambda} + \sum_{i=1}^3 m_i \\
     \alpha_{1,\overline{3}}, \alpha_{1,\overline{1}}) & (2,2,1,1,1,2) & 2s_{1,1} + 2s_{1,2} + s_{1,3} + s_{1,4} + s_{1,\overline{3}} + 2s_{1,\overline{1}} \leq q_{\textbf{p}}^{\lambda} + \sum_{i=1}^2 m_i \\
      \hline
       (\alpha_1, \alpha_{1,2}, \alpha_{1,3}, \alpha_{1,4},  & (2,2,2,1,1,1,2) & 2s_{1,1} + 2s_{1,2} + 2s_{1,3} + s_{1,4} + s_{1,\overline{3}}+ s_{1,\overline{2}} + 2s_{1,\overline{1}} \leq q_{\textbf{p}}^{\lambda} + \sum_{i=1}^3 m_i \\
     \alpha_{1,\overline{3}}, \alpha_{1,\overline{2}} \alpha_{1,\overline{1}} ) & (2,2,1,1,1,1,2) & 2s_{1,1} + 2s_{1,2} + s_{1,3} + s_{1,4} + s_{1,\overline{3}}+ s_{1,\overline{2}} + 2s_{1,\overline{1}} \leq q_{\textbf{p}}^{\lambda} + \sum_{i=1}^2 m_i \\
      &  (2,1,1,1,1,1,2) & 2s_{1,1} + s_{1,2} + s_{1,3} + s_{1,4} + s_{1,\overline{3}}+ s_{1,\overline{2}} + 2s_{1,\overline{1}} \leq q_{\textbf{p}}^{\lambda} + m_1 \\
      \hline
    \end{array}$
    }
    \caption{Coefficients and inequalities for $\s \mathfrak{p}_8$.}
    \label{coefficientssp4}
\end{table}
 
 \subsection{Fundamental sets} In this subsection, we study the case when $\lambda = \m_i, 1 \leq i \leq n$. Table \ref{polytopesforfundamentalmodules} summarises the polytopes for fundamental modules.  
 
 \begin{table}
      \centering
 \begin{equation*}
     \begin{array}{|c|c|}
     \hline
   \text{Fundamental~weight}   & \text{Polytope~inequalities}  \\
   \hline
     \m_1  & s_{1,1} + s_{1,2} + \cdots + s_{1,\overline{1}} \leq 1 \\
    \hline 
    \m_i , ~1 < i < n & s_{1,1} + \cdots + s_{1,i-1} = 0 \\
                     & s_{1,i} + \cdots + s_{1,\overline{i+1}} + s_{1,\overline{1}} \leq 1 \\
                     & s_{1,i} + \cdots  + s_{1,\overline{1}} \leq 2  \\
                    & s_{1,i} + \cdots  + 2s_{1,\overline{1}} \leq 2  \\
                    \hline
   \m_n & s_{1,1} + \cdots + s_{1,n-1} = 0 \\
  & s_{1,n} + s_{1, \overline{n-1}} \cdots  + s_{1,\overline{1}} \leq 1 \\
        & s_{1,n} + \cdots + s_{1,\overline{1}} \leq 2  \\
        & s_{1,n} + \cdots + 2s_{1,\overline{1}} \leq 2  \\
        \hline
 \end{array}
 \end{equation*}
     \caption{Polytopes for fundamental modules of $\s \mathfrak{p}_{2n}$.}
     \label{polytopesforfundamentalmodules}
 \end{table}
 
 Let $S_w(\lambda)$ and $R_i$ denote the sets
 \[
S_w(\lambda) = P_w(\lambda) \cap \mathbb{Z}_{\geq0}^{\#R_w^-} \quad R_i = \{ \alpha \in R_w^- : (\m_i , \alpha) \neq 0 \}.
 \]
 For every $\alpha \in R_i$, we denote by $\textbf{m}_{\alpha}$ the multi-exponent $(m_{\alpha})_{\alpha \in R_w^-} \in \mathbb{Z}_{\geq0}^{\#R_w^-}$ defined by $m_{\alpha} = 1$ and zero otherwise. The following lemma follows immediately from the definition of $P_w(\lambda)$.
 
 \begin{lem}\label{fundamentalpoints} 
 Let $\lambda = \m_i$, then the following points are in $S_w(\m_i)$:
\begin{itemize}
        \item[i)] $\textbf{m}_{\alpha}$ for all $\alpha \in R_i$.
        \item[ii)] $\textbf{m}_{\alpha} + \textbf{m}_{\alpha'}$ for all  $\alpha = \alpha_{1,j},~ i \leq j \leq n,~ \alpha' = \alpha_{1, \overline{j'}},~ 2 \leq j' \leq i$  (or $\alpha = \alpha_{1,\overline{j}},~  \alpha' = \alpha_{1,\overline{j'}},~ 2 \leq j \leq j' \leq i$).
    \end{itemize}
 \end{lem}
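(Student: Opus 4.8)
The plan is to specialise the defining inequalities of $P_w(\lambda)$ to $\lambda = \m_i$ (so $m_i = 1$ and $m_k = 0$ for $k \neq i$) and then to test each listed lattice point against the resulting finite list; this is exactly the verification the statement refers to. First I would record $R_i$ explicitly: expanding the roots of $R_w^-$ in the simple basis, one finds $(\m_i, \alpha_{1,j}) \neq 0$ precisely when $j \geq i$, whereas $(\m_i, \alpha_{1,\overline{j}}) \neq 0$ for every $j$ (the coefficient of $\alpha_i$ in $\alpha_{1,\overline{j}}$ is $1$ or $2$, and in $\alpha_{1,\overline{1}}$ it is $2$ for $i<n$ and $1$ for $i=n$). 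Hence $R_i$ is the terminal segment $\{\alpha_{1,i}, \ldots, \alpha_{1,n}, \alpha_{1,\overline{n-1}}, \ldots, \alpha_{1,\overline{1}}\}$ of the chain. Substituting $\lambda = \m_i$, the Dyck inequalities \eqref{ineqc} of index $< i$ have right-hand side $0$ and force $s_{1,1} = \cdots = s_{1,i-1} = 0$, which is consistent with every listed point since all of their support lies in $R_i$; what remains are the inequalities recorded in Table \ref{polytopesforfundamentalmodules}.

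For part (i) there is little to do beyond observing that $\textbf{m}_{\alpha}$ has a single coordinate equal to $1$ at a root of $R_i$, so that every surviving inequality has left-hand side at most its right-hand side (both $\geq 1$); the only coefficient-$2$ contribution arises for $\alpha = \alpha_{1,\overline{1}}$ in the inequality $s_{1,i} + \cdots + 2s_{1,\overline{1}} \leq 2m_i$, where the two sides agree. For part (ii) I would first note that in both families of pairs the two roots are either barred with index $\geq 2$ or unbarred with index $\geq i$, so $\alpha_{1,\overline{1}}$ is never in the support; this immediately disposes of the second Dyck inequality and of the coefficient $2s_{1,\overline{1}}$ in \eqref{inequalitywithcoefficients}, and the degree bound $s_{1,i} + \cdots + s_{1,\overline{1}} \leq 2m_i = 2$ then holds since each point has only two nonzero coordinates.

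The substantive checks, and the place where the hypotheses $i \leq j_0$ and $j' \leq i$ in (ii) are genuinely used, are the degree-path inequality associated to \eqref{degreepathc} and the coefficient inequalities \eqref{inequalitywithcoefficients} (here I write $j_0$ for the index of a support root, reserving $j$ for the index of the path). For a degree path descending to $\alpha_{1,\overline{j}}$ the right-hand side equals $1$ if $i < j$ and $2$ if $i \geq j$, while its left-hand side counts an unbarred support root unconditionally and a barred support root $\alpha_{1,\overline{j_0}}$ only when $j_0 \geq j$. Hence a point of (ii) attains left-hand side $2$ on such a path only if the relevant barred indices are $\geq j$, and combined with $j' \leq i$ (first family) or $j_0 \leq j' \leq i$ (second family) this forces $j \leq i$, so the right-hand side is $2$ and the inequality holds. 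The one delicate point I expect is the coefficient inequality in the first family, where the unbarred root $\alpha = \alpha_{1,j_0}$ can fall into the weight-$2$ block $s_{1,1}, \ldots, s_{1,k}$ and the left-hand side reaches $3$; here I would show that this situation forces $k \geq j_0 \geq i$ and path index $j \leq j' \leq i$, so that $q_{\textbf{p}}^{\lambda} + \sum_{l=1}^{k} m_l = 2 + 1 = 3$ and the inequality is saturated, whereas every other configuration leaves the left-hand side at most $2$ against a right-hand side at least $2$. The degenerate cases $i = 1$ (where (ii) is empty and $P_w(\m_1)$ is the simplex $\sum_{\alpha} s_{\alpha} \leq 1$) and $i = n$ (where every degree path already has right-hand side $2$) are handled in the same way.
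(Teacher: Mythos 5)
Your proposal is correct and takes essentially the same route as the paper: the paper gives no argument beyond asserting that the lemma ``follows immediately from the definition of $P_w(\lambda)$'' (with Table \ref{polytopesforfundamentalmodules} recording the specialised inequalities), and your proof is exactly that verification carried out explicitly, with the genuinely non-trivial cases (the degree paths and the coefficient inequalities \ref{inequalitywithcoefficients}, where the hypotheses $j' \leq i$ and $i \leq j$ are used) identified and checked correctly. One small imprecision: in part (i) the unbarred roots $\alpha_{1,j}$ with $j \leq k$ also receive coefficient $2$ in \ref{inequalitywithcoefficients}, not only $\alpha_{1,\overline{1}}$, but this case is harmless for the same reason you give in part (ii), namely $k \geq j \geq i$ forces the right-hand side to be at least $q_{\textbf{p}}^{\lambda} + 1 \geq 2$.
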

 
 \section{Type B}\label{section4}
 Let $\alpha_i, ~\m_i, ~i=1, \ldots, n$ denote the simple roots and fundamental weights of $ \s \mathfrak{o}(2n+1) = \n^+ \oplus \mathfrak{h} \oplus \n^-$. The killing form on $\mathfrak{h}^*$ is denoted $(\cdot, \cdot)$. The positive roots of $\s \mathfrak{o}(2n+1)$ can be divided into two groups as follows:
 \begin{gather}
     \alpha_{i,j} = \alpha_i + \cdots + \alpha_j , ~1 \leq i \leq j \leq n , \\
     \alpha_{i , \overline{j}} = \alpha_i + \cdots + \alpha_{j-1} + 2 \alpha_j + \cdots + 2 \alpha_n, ~2 \leq j \leq n.
 \end{gather}
 For $w = s_1s_2 \cdots s_n \cdots s_2 s_1$ the root poset of the roots in $R_w^-$ is given by
\[
 \alpha_1 \rightarrow \alpha_{1,2} \rightarrow \ldots \rightarrow \alpha_{1,n} \rightarrow \alpha_{1,\overline{n}} \rightarrow \alpha_{1,\overline{n-1}} \rightarrow \ldots \rightarrow \alpha_{1,\overline{3}}\rightarrow \alpha_{1,\overline{2}} 
\]
 For every positive root $\alpha$, we fix a non-zero element $f_{\alpha} \in \n^-$. Explicitly, the elements $f_{\alpha}$ are given by 
 \begin{gather}
     f_{i,j} = E_{j+1,i} - E_{n+j+1,n+i}, \quad 1 \leq i \leq j \leq n-1, \\
     f_{i,n} = E_{n+i,2n+1} - E_{2n+1,i}, \quad 1 \leq i \leq n , \\
     f_{i,\overline{j}} = E_{n+i,j} - E_{n+j,i}.
 \end{gather}
  Note that we also have the operator $ \bar{f}_{i,n} = f_{i,n}^2 = -E_{n+i,i} $ where the square corresponds to the multiplication in the universal enveloping algebra. Hence to each positive root $\alpha_{i,n}$, we attach two operators $f_{i,n}$ and $\bar{f}_{i,n}$. We order the generators of $S(\n^-)$ by
 \begin{gather*}
     f_{n,n} \succ \\
     f_{n-1,\overline{n}} \succ f_{n-1,n} \succ f_{n-1, n-1} \succ \\
     f_{n-2,\overline{n-1}} \succ f_{n-2, \overline{n}} \succ f_{n-2, n} \succ f_{n-2,n-1} \succ f_{n-2, n-2} \succ \\
     \cdots \succ \cdots \succ \cdots \succ \\
     f_{1,\overline{2}} \succ f_{1,\overline{3}} \succ \cdots \succ f_{1,\overline{n}} \succ f_{1,n} \succ f_{1,n-1} \succ \cdots \succ f_{1,2} \succ f_{1,1}.
 \end{gather*}
 We denote by the same symbol $\succ$ the induced homogeneous lexicographical order. By restriction, we obtain a total order on the generators of $S(\n_w^-)$.
 
 \subsection{Polytopes} Let $\lambda = \sum_{j=1}^{n} m_j \m_j $ be an $\s \mathfrak{o}(2n+1)$-dominant integral weight. As before, we would like to define polytopes using inequalities arising  from paths.

\begin{defn} \label{pathsb}
A path in the poset $(R_w^-, \leq)$ is one of the following sequence of roots:
\begin{itemize}
    \item[i)] a \emph{Dyck path} is a sequence of roots of the form $(\alpha_1, \ldots, \alpha_{1,j})$ where $j=1, \ldots, n-1$,
    \item[ii)] a \emph{degree path} is a sequence of roots of the form $(\alpha_{1} , \ldots, \alpha_{1,\overline{j}})$ where $j=2, \ldots,n$.
\end{itemize}
\end{defn}

To the paths i) and ii) above, we associate the following inequalities respectively:
\begin{gather}\label{typebineq}
    s_{1,1} + \cdots + s_{1,j} \leq m_1 + \cdots + m_j, \\
    s_{1,1} + \cdots + s_{1,\overline{j}} \leq 2m_1 + m_2 + \cdots + m_{j-1} + 2 m_j + \cdots + 2m_{n-1} + m_n.
\end{gather}

 Similar to the type $C$ case, we need one more set of inequalities to define the polytopes $P_w(\lambda)$, namely, the inequalities arising from paths with coefficients. The description of the coefficients is similar to the one given in Section \ref{section3} for type $C$. 
 
 \begin{defn}
 Let $\textbf{p} = (\alpha_{1} , \ldots, \alpha_{1,\overline{j}})$ be a degree path as in Definition \ref{pathsb}. We associate to $\textbf{p}$ tuples $t_{\textbf{p}}^k$ labelled by a number $k \in \{ 0, j-1, \ldots, n-1 \}$. For every number $k$, the tuple $t_{\textbf{p}}^k$ is defined as follows:
 \begin{itemize}
     \item If $k=0$, then we put $t_{\alpha_{1,1}} = \cdots = t_{\alpha_{1,n-1}} = t_{\alpha_{1,\bar{j}}} = \cdots = t_{\alpha_{1,\bar{n}}} = 2, ~t_{\alpha_{1,n}} = 1$.
     \item If $k \neq 0$, then we put $t_{\alpha_{1,1}} = \cdots = t_{\alpha_{1,k}} = 2$ and $t_{\alpha} = 1$ otherwise.
 \end{itemize}
 We call the elements of $t_{\textbf{p}}^k$ coefficients of $\textbf{p}$. The assignment of the coefficients is done component-wise. The resulting path is called a \emph{path with coefficients}.
 \end{defn}

Let $q_{\textbf{p}}^{\lambda}$ denote the right hand-side of the inequality given by the path $\textbf{p}$. To every path with coefficients, we associate an inequality
\begin{equation}\label{typebineq2}
    2\sum_{l=1}^k s_{1,l} + s_{1,k+1} + \cdots + s_{1,\overline{j}} \leq q_{\textbf{p}}^{\lambda} + \sum_{l=2}^{k} m_l
\end{equation}

 if the tuple of coefficients is $t_{\textbf{p}}^k, ~k \neq0$ and
\begin{equation}\label{typebineq3}
    2\sum_{l=1}^{n-1} s_{1,l} + s_{1,n} + 2\sum_{l=j}^n s_{1,\overline{l}} \leq q_{\textbf{p}}^{\lambda} + \sum_{l=2}^{n} m_l + \sum_{l=j}^{n-1} m_l.
\end{equation}
if the tuple of coefficients is $t_{\textbf{p}}^0$.

\begin{defn}
 The polytope $P_w(\lambda)$ is defined by the inequalities \ref{typebineq}- \ref{typebineq3}. 
\end{defn}

\begin{exa}
In Table \ref{coefficientss09}, we present a complete list of paths with coefficients associated to every degree path for $\g = \s \mathfrak{o}_9$. We also give for every path with coefficients the corresponding inequality.
\end{exa}

\begin{table}[ht]
\centering
\resizebox{\textwidth}{!}{%
   $ \begin{array}{|c|c|c|}
   \hline
    \text{Degree~path}  & \text{Coefficients} & \text{Inequality} \\
    \hline
     ( \alpha_1, \alpha_{1,2}, \alpha_{1,3}, \alpha_{1,4},    & (2,2,2,1,2) & 2s_{1,1} + 2s_{1,2} + 2s_{1,3} + s_{1,4} + 2s_{1,\overline{4}} \leq q_{\textbf{p}}^{\lambda} + \sum_{l=2}^4 m_l  \\
    \alpha_{1,\overline{4}}) & (2,2,2,1,1) & 2s_{1,1} + 2s_{1,2} + 2s_{1,3} + s_{1,4} + s_{1,\overline{4}} \leq q_{\textbf{p}}^{\lambda} + \sum_{l=2}^3 m_l  \\
     \hline
     ( \alpha_1, \alpha_{1,2}, \alpha_{1,3}, \alpha_{1,4},  & (2,2,2,1,2,2) & 2s_{1,1} + 2s_{1,2} + 2s_{1,3} + s_{1,4} + 2s_{1,\overline{3}} + 2s_{1,\overline{4}} \leq q_{\textbf{p}}^{\lambda} + \sum_{l=2}^4 m_l  + m_3 \\
     \alpha_{1,\overline{4}}, \alpha_{1,\overline{3}}) & (2,2,2,1,1,1) & 2s_{1,1} + 2s_{1,2} + 2s_{1,3} + s_{1,4} + s_{1,\overline{3}} + s_{1,\overline{4}} \leq q_{\textbf{p}}^{\lambda} + \sum_{l=2}^3 m_l \\
      & (2,2,1,1,1,1) & 2s_{1,1} + 2s_{1,2} + s_{1,3} + s_{1,4} + s_{1,\overline{3}} + s_{1,\overline{4}} \leq q_{\textbf{p}}^{\lambda} +  m_2 \\
      \hline
      ( \alpha_1, \alpha_{1,2}, \alpha_{1,3}, \alpha_{1,4},  & (2,2,2,1,2,2,2) &  2s_{1,1} + 2s_{1,2} + 2s_{1,3} + s_{1,4} + 2s_{1,\overline{4}} + 2s_{1,\overline{3}} + 2s_{1,\overline{2}} \leq q_{\textbf{p}}^{\lambda} + \sum_{l=2}^4 m_l  + \sum_{l=2}^3 \\
     \alpha_{1,\overline{4}}, \alpha_{1,\overline{3}} \alpha_{1,\overline{2}} ) & (2,2,2,1,1,1,1) & 2s_{1,1} + 2s_{1,2} + 2s_{1,3} + s_{1,4} + s_{1,\overline{4}} + s_{1,\overline{3}} + s_{1,\overline{2}} \leq q_{\textbf{p}}^{\lambda} + \sum_{l=2}^3 m_l   \\
      &  (2,2,1,1,1,1,1) & 2s_{1,1} + 2s_{1,2} + s_{1,3} + s_{1,4} + s_{1,\overline{4}} + s_{1,\overline{3}} + s_{1,\overline{2}} \leq q_{\textbf{p}}^{\lambda} + m_2 \\
      & (2,1,1,1,1,1,1) &  2s_{1,1} + s_{1,2} + s_{1,3} + s_{1,4} + s_{1,\overline{4}} + s_{1,\overline{3}} + s_{1,\overline{2}} \leq q_{\textbf{p}}^{\lambda} \\
      \hline
    \end{array}$%
}
\caption{Coefficients and inequalities for $\s \mathfrak{o}_{9}$}
    \label{coefficientss09}
\end{table}

 \subsection{Fundamental sets} We study the case $\lambda = \m_i, ~i=1, \ldots, n$. In Table \ref{fundamentalb}, we summarise the polytopes for fundamental modules.
 
\begin{table}[ht]
    \centering
    \begin{equation*}
     \begin{array}{|c|c|}
     \hline
   \text{Fundamental weight}   & \text{Polytope}  \\
   \hline 
   \m_i, ~ 1 \leq i \leq n-1   & s_{1,1} + s_{1,2} + \cdots + s_{1,i-1} = 0 \\
    & s_{1,i} + \cdots + s_{1,n-1} + s_{1,\overline{n}} + \cdots + s_{1,\overline{i+1}}  \leq 1 \\
   &  s_{1,i} + \cdots +  s_{1,\overline{2}} \leq 2 \\
    &  2s_{1,i} + \cdots + 2s_{1,n-1} + s_{1,n} + 2 s_{1,\overline{n}} + \cdots + 2s_{1,\overline{i+1}} \leq 2 \\
    \hline
    \m_n & s_{1,1} + \cdots + s_{n-1} = 0 \\
    & s_{1,n} + \cdots + s_{1,\overline{2}}  \leq 1 \\
    \hline
 \end{array}
\end{equation*}
    \caption{Polytopes for fundamental modules of $\s \mathfrak{o}_{2n+1}$.}
    \label{fundamentalb}
\end{table}
 
As before, we denote by $S_w(\lambda)$ and $R_i$ the sets
 \[
S_w(\lambda) = P_w(\lambda) \cap \mathbb{Z}_{\geq0}^{\#R_w^-} \quad R_i = \{ \alpha \in R_w^- : (\m_i , \alpha) \neq 0 \}.
 \]
 For every $\alpha \in R_i$, we denote by $\textbf{m}_{\alpha}$ the multi-exponent $(m_{\alpha})_{\alpha \in R_w^-} \in \mathbb{Z}_{\geq0}^{\#R_w^-}$ defined by $m_{\alpha} = 1$ and zero otherwise. The following lemma follows immediately from the definition of $P_w(\m_i)$.
 
\begin{lem}\label{fundamentalpointsB}The following points are in $S_w(\m_i)$:
\begin{itemize}
        \item[i)] the points $\textbf{m}_{\alpha}$  for all $\alpha \in R_i$,
        \item[ii)] the points $\textbf{m}_{\alpha} + \textbf{m}_{\alpha'}$ such that $\alpha = \alpha_{1,j},~ \alpha' = \alpha_{1, \overline{j'}}, ~ i \leq j \leq n, ~ 2 \leq j' \leq n$ (or $\alpha = \alpha_{1,\overline{j}}, ~ \alpha' = \alpha_{1,\overline{j'}}, ~ 2 \leq j \leq j' \leq i$) for all $\alpha, \alpha' \in R_i$,
        \item[iii)] the point $2 \textbf{m}_{\alpha}, ~ \alpha = \alpha_{1,n}$.
    \end{itemize}
 \end{lem}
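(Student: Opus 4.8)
The plan is to check directly that each listed point lies in $P_w(\m_i)$, i.e. satisfies every defining inequality \ref{typebineq}, \ref{typebineq2}, \ref{typebineq3}. Since $\lambda = \m_i$ means $m_i = 1$ and $m_d = 0$ for $d \neq i$, all of these inequalities specialise to the short list recorded in Table \ref{fundamentalb}, so the whole argument reduces to substituting the given coordinates into those inequalities. The equality $s_{1,1} + \cdots + s_{1,i-1} = 0$ from Table \ref{fundamentalb} confines every nonzero coordinate to $R_i = \{\alpha_{1,j} : i \le j \le n\} \cup \{\alpha_{1,\overline{j}} : 2 \le j \le n\}$, so I only ever have to bound sums of the active coordinates of the points in question.

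First I would dispose of the single-box points $\textbf{m}_{\alpha}$ of (i): each has exactly one coordinate equal to $1$, so every left-hand side equals the coefficient attached to that coordinate, namely $1$ in the Dyck-type and degree inequalities and at most $2$ in the coefficient inequality of type $t_{\textbf{p}}^1$; as the right-hand sides are $m_i = 1$ and $2m_i = 2$, all hold. The substance is in (ii), which for $1 \le i \le n-1$ I would split along the two cases of the statement, the binding constraint being the coefficient inequality $2s_{1,i} + \cdots + 2s_{1,n-1} + s_{1,n} + 2s_{1,\overline{n}} + \cdots + 2s_{1,\overline{i+1}} \le 2m_i$ of Table \ref{fundamentalb}. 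Its support consists of the short coordinates $s_{1,i}, \ldots, s_{1,n}$ and the barred coordinates $s_{1,\overline{n}}, \ldots, s_{1,\overline{i+1}}$, with coefficient $2$ everywhere except coefficient $1$ at $s_{1,n}$. The index bounds $j' \le i$ (respectively $j \le j' \le i$) are chosen precisely so that the barred coordinate(s) $s_{1,\overline{j'}}$ lie below the cutoff $\overline{i+1}$ and therefore receive coefficient $0$: in the mixed case $\alpha = \alpha_{1,j}$, $\alpha' = \alpha_{1,\overline{j'}}$ the left-hand side then equals the coefficient of $s_{1,j}$, which is at most $2$, and in the two-barred case it is $0$. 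Meanwhile the degree inequality $s_{1,i} + \cdots + s_{1,\overline{2}} \le 2m_i$ collects both boxes and gives exactly $2 = 2m_i$, and the Dyck-type inequality bounded by $m_i$ (whose support omits $s_{1,n}$ and every $s_{1,\overline{j'}}$ with $j' \le i$) sees at most one of the two boxes. For the doubled point (iii), $2\textbf{m}_{\alpha_{1,n}}$ places its only mass, equal to $2$, on the unique coordinate $s_{1,n}$ carrying coefficient $1$ in the $t_{\textbf{p}}^1$ inequality, so that inequality reads $2 \le 2m_i$; $s_{1,n}$ does not appear in the Dyck-type inequality, and the degree inequality again gives $2 \le 2m_i$.

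I expect no deep obstacle here, since the lemma is a substitution check; the only real care needed is the bookkeeping of the two coefficient patterns $t_{\textbf{p}}^1$ and $t_{\textbf{p}}^2$ and the confirmation that the index ranges in (ii) and (iii) coincide exactly with the supports of those coefficient vectors, while keeping the two rows of Table \ref{fundamentalb} (the generic case $1 \le i \le n-1$ and the spin node $i = n$, whose inequality is bounded by $m_n$) separate. This matching of ranges to supports is where an index slip is most likely, so I would cross-check the coefficient patterns against the explicit $\s\mathfrak{o}(9)$ data in Table \ref{coefficientss09} before finalising.
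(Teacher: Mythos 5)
The paper offers no argument for this lemma at all --- it is prefaced only by ``The following lemma follows immediately from the definition of $P_w(\lambda)$'' --- so your strategy of substituting each listed point into the defining inequalities is exactly the intended route, and your checks of (i), of the two-barred case of (ii), and of (iii) against the rows of Table \ref{fundamentalb} are carried out correctly for the ranges you actually consider.

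The genuine problem is that the ranges you verify are not the ranges in the statement. In the mixed case of (ii) the lemma asserts $i \le j \le n$, $2 \le j' \le n$, whereas your sentence ``the index bounds $j' \le i$ (respectively $j \le j' \le i$) are chosen precisely so that the barred coordinate(s) receive coefficient $0$'' silently replaces $2 \le j' \le n$ by $2 \le j' \le i$. These are different claims, and the stated one is false: for $i \le n-1$, $i \le j \le n-1$ and $i+1 \le j' \le n$, the point $\textbf{m}_{\alpha_{1,j}} + \textbf{m}_{\alpha_{1,\overline{j'}}}$ has left-hand side $2$ in the inequality $s_{1,i} + \cdots + s_{1,n-1} + s_{1,\overline{n}} + \cdots + s_{1,\overline{i+1}} \le m_i = 1$ of Table \ref{fundamentalb}, so it does not lie in $S_w(\m_i)$; moreover it cannot be allowed, since for instance with $j = i$, $j' = n$, $i \le n-2$ the vector $f_{1,i}f_{1,\overline{n}}v_{\m_i}$ is zero (its weight involves $i+2$ distinct coordinates, hence is not a weight of $\Lambda^i\mathbb{C}^{2n+1}$), which would contradict Proposition \ref{lin-indep}. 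So what you have proven is a corrected lemma, with ranges matching the type $C$ analogue (Lemma \ref{fundamentalpoints}); you must state that correction explicitly rather than misquote the statement, and the same goes for your tacit restriction of (ii) and (iii) to $i \le n-1$ (for $i = n$ these points violate $s_{1,n} + \cdots + s_{1,\overline{2}} \le m_n$). A second, smaller gap: your opening premise that inequalities \ref{typebineq}--\ref{typebineq3} ``specialise to'' Table \ref{fundamentalb} is not literally true. The degree-path inequality of Section \ref{section4} contains $s_{1,n}$ with coefficient $1$ and a right-hand side that specialises to $1$ whenever $2 \le i < j$, which would exclude even the point $2\textbf{m}_{\alpha_{1,n}}$ of (iii); it is the table (where $s_{1,n}$ is absent from the inequality bounded by $m_i$) that makes (iii) true. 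Since your whole verification leans on the table, this inconsistency in the paper should be flagged as such, not assumed away.
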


 \section{Type D}\label{section5}
 Let $\alpha_i, ~\m_i, ~i=1, \ldots ,n$ denote the simple roots and fundamental weights of $ \s \mathfrak{o}(2n) =\n^+ \oplus \mathfrak{h} \oplus \n^-$. The killing form on $\mathfrak{h}^*$ is denoted $(\cdot, \cdot)$.  The positive roots of $\s \mathfrak{o}(2n)$ can be divided into two groups as follows:
 \begin{gather}
     \alpha_{i,j} = \alpha_i + \cdots + \alpha_j , \quad 1 \leq i \leq j \leq n-1 ,\\
     \alpha_{i,\overline{j}} = \alpha_i \cdots + \alpha_{n-2} + \alpha_j + \cdots + \alpha_{n}, \quad 1 \leq i < j \leq n. 
 \end{gather}
 Note that $\alpha_{i, \overline{n}} = \alpha_i + \cdots + \alpha_{n-2} + \alpha_{n}$ and $\alpha_{n-1, \overline{n}} = \alpha_n$. We consider the word $w = s_1 s_2 \cdots s_{n-1} s_n \hat{s_{n-1}} s_{n-2} \cdots s_{1}$ where the hat means $s_{n-1}$ is omitted. The root poset $R_w^-$ is given by

\begin{center}
\resizebox{\columnwidth}{!}{%
    \begin{tikzpicture}
    \node (a) at (-9,1) {$\alpha_{1}$};
    \node (b) at (-7,1) {$\alpha_{1,2}$};
    \node (c) at (-5,1) {$\cdots$};
    \node (d) at (-3,1) {$\alpha_{1,n-3}$};
    \node (e) at (-1,1) {$\alpha_{1,n-2}$};
    \node (f) at (1,1) {$\alpha_{1,n-1}$};
    \node (g) at (2,0) {$\alpha_{1,\overline{n-1}}$};
    \node (h) at (4,0) {$\alpha_{1,\overline{n-2}}$};
    \node (i) at (6,0) {$\cdots$};
    \node (j) at (8,0) {$\alpha_{1,\overline{3}}$};
    \node (k) at (10,0) {$\alpha_{1,\overline{2}}$};
    \node (l) at (0,0) {$\alpha_{1,\overline{n}}$};
    \draw[->] (a) -- (b);
    \draw[->] (b)-- (c);
    \draw[->] (c)-- (d);
    \draw[->] (d)-- (e);
    \draw[->] (e)-- (f);
    \draw[->] (f)-- (g);
    \draw[->] (g)-- (h);
    \draw[->] (h)-- (i);
    \draw[->] (i) -- (j);
    \draw[->] (j)-- (k);
    \draw[->] (e) -- (l);
    \draw[->] (l)-- (g);
    \end{tikzpicture}
    }
\end{center}

\begin{rem}
Before we proceed, we would like to remark here that for the word $w' = s_1 s_2 \cdots s_{n-1} s_n s_{n-1} s_{n-2} \cdots s_{1}$, the construction resembles the situation in type $A$. In particular, one obtains 
\[
R_{w'}^- = \{ \alpha_1 , \ldots , \alpha_{1,n-2} , \alpha_{1,\overline{n}} , \alpha_{2,\overline{n}} , \ldots , \alpha_{n-1,\overline{n}} \}
\]
and the polytope $P_{w'}(\lambda)$ where $\lambda = \sum_{i=1}^n m_i \m_i$ is defined by the Dyck path inequalities
\begin{gather*}
    \sum_{l=1}^j s_{1,l} \leq \sum_{l=1}^j m_l, \quad 1 \leq j \leq n-2 \\
    \sum_{l=i}^{n-1} s_{i,\overline{n}}  \leq m_i + \cdots + m_{n-2} + m_n , \quad 2 \leq i \leq n-1 \\
    \sum_{l=1}^j s_{1,l} + s_{1,\overline{n}} + \sum_{l=i}^{n-1} s_{i,\overline{n}} \leq m_1 + \cdots + m_{n-2} + m_n, ~ 1 \leq j \leq n-2,~ j < i \leq n-1
\end{gather*}
and the degree inequalities
\[
\sum_{l=1}^r s_{1,l} + s_{1,\overline{n}} + \sum_{l=i}^{n-1} s_{i,\overline{n}} \leq m_1 + \cdots + m_{n-2} + m_n + \sum_{l = i}^r m_l, \quad 2 \leq i \leq n-2, ~ i \leq r \leq n-2.
\]
\end{rem}
 
 For every positive root $\alpha$, we fix a non-zero element $f_{\alpha} \in \n^-$. Explicitly, the elements $f_{\alpha}$ are given by
  \begin{gather}
     f_{i,j} = E_{j+1,i} - E_{n+j+1,n+i}, \quad 1 \leq i \leq j \leq n-1, \\
     f_{i,\overline{j}} = E_{n+i,j} - E_{n+j,i}, \quad 1 \leq i < j \leq n.
 \end{gather}
  Observe that we also have the operator $ f_{i,\overline{n}} f_{i,n-1} = f_{i,n-2} f_{i,\overline{n-1}}  =E_{n+i,i} $ where the multiplication is defined in the universal enveloping algebra. We define a total order on the generators of $S(\n^-)$ by:
  
  \begin{gather*}
      f_{n-1,\overline{n}} \succ f_{n-1,n-1} \succ \\
      f_{n-2, \overline{n-1}} \succ f_{n-2, \overline{n}} \succ f_{n-2,n-1} \succ f_{n-2,n-2} \succ \\
      f_{n-3,\overline{n-2}} \succ f_{n-3, \overline{n-1}} \succ f_{n-3, \overline{n}} \succ f_{n-3,n-1} \succ f_{n-3,n-2} \succ f_{n-3,n-3} \succ \\
      \cdots \succ \cdots \succ \cdots \succ \\
      f_{1,\overline{2}} \succ f_{1,\overline{3}} \succ \cdots \succ f_{1,\overline{n}} \succ f_{1,n-1} \succ \cdots \succ f_{1,2} \succ f_{1,1}. 
  \end{gather*}
By restriction, we obtain a total order on the generators of $S(\n_w^-)$. We use the same symbol $\succ$ for the induced homogeneous lexicographical order.  

\subsection{Polytopes} We have the following version of paths for type $D$.

\begin{defn}\label{typedpath}
 A \emph{Dyck path} is a sequence of roots of the form
 \begin{gather*}
     (\alpha_{1}, \ldots, \alpha_{1,j}), \quad j=1 , \ldots, n-1; \quad (\alpha_{1}, \alpha_{1,2}, \ldots, \alpha_{1,n-2}, \alpha_{1,\overline{n}}); \\
     (\alpha_{1}, \alpha_{1,2}, \ldots, \alpha_{1,n-1}, \alpha_{1,\overline{n-1}}); \quad (\alpha_{1}, \alpha_{1,2}, \ldots, \alpha_{1,n-2}, \alpha_{1,\overline{n}}, \alpha_{1,\overline{n-1}}).
 \end{gather*}
 A \emph{degree path} is a sequence of roots of the form
 \begin{gather*}
     (\alpha_{1}, \alpha_{1,2}, \ldots, \alpha_{1,n-2},\alpha_{1,n-1}, \alpha_{1,\overline{n-1}} , \ldots , \alpha_{1,\overline{j}}), \quad j=2, \ldots, n-2; \\
     (\alpha_{1}, \alpha_{1,2}, \ldots, \alpha_{1,n-2}, \alpha_{1,\overline{n}}, \alpha_{1,\overline{n-1}} , \ldots , \alpha_{1,\overline{j}}), \quad j=2, \ldots, n-2.
 \end{gather*}
\end{defn}

Let $\lambda = \sum_{i=1}^n m_i \m_i$ be an $\s \mathfrak{o} (2n)$ dominant integral weight. To every Dyck path we associate the following inequalities respectively:
\begin{gather}\label{ineqd1}
    \sum_{1 \leq j \leq n-1}s_{1,j} \leq m_1 + \cdots + m_j; \quad
    \sum_{j=1}^{n-2}s_{1,j} + s_{1,\overline{n}} \leq m_1 + \cdots + m_{n-2} + m_n; \\
     \sum_{j=1}^{n-1}s_{1,j} + s_{1,\overline{n-1}} \leq m_1 + \cdots + m_n; \quad
      \sum_{j=1}^{n-2}s_{1,j} + s_{1,\overline{n}} + s_{1,\overline{n-1}} \leq m_1 + \cdots  + m_n
\end{gather}
and to every degree path, we associate the following inequalities respectively:
\begin{gather}\label{ineqd2}
    \sum_{l=1}^{n-1}s_{1,l} + \sum_{l=j}^{n-1} s_{1,\overline{j}} \leq m_1 + \cdots + m_{j-1} + 2m_j + \cdots + 2m_{n-2} + m_{n-1} + m_n; \\
     \sum_{l=1}^{n-2}s_{1,l} + \sum_{l=j}^{n} s_{1,\overline{l}} \leq m_1 + \cdots + m_{j-1} + 2m_j + \cdots + 2m_{n-2} + m_{n-1} + m_n
\end{gather}
where $j = 2, \ldots, n-2$. Similar to the construction in the previous two sections, we need one more set of inequalities to define the polytopes $P_w(\lambda
)$, namely, the inequalities with coefficients. 

\begin{defn}
 Let $\textbf{p} = (\alpha_{1}, \ldots, \alpha_{1,n-1}, \alpha_{1,\bar{j}}, \ldots, \alpha_{1,\bar{n}}), ~j = 2, \ldots,n$, be a be a sequence of roots. We associate to $\textbf{p}$ tuples $t_{\textbf{p}}^k$ labelled by the path $\textbf{p}$ and a number $k \in \{ j-1, \ldots, n-2 \}$. For every number $k$, the tuple $t_{\textbf{p}}^k$ is defined by 
 \begin{itemize}
     \item $t_{\alpha_{1}} = \cdots = t_{\alpha_{1,k}} = 2$ and $t_{\alpha} = 1$ otherwise.
 \end{itemize}
 The elements of $t_{\textbf{p}}^k$ are called coefficients of $\textbf{p}$. The assignment of the coefficients is done component wise and the resulting path is called a path with coefficients.
\end{defn}

  To every path with coefficients as defined above, we associate an inequality
\begin{equation}\label{ineqd3}
   2 \sum_{l=1}^{k} s_{1,l} + \sum_{l=k+1}^{n-1} s_{1,l} + \sum_{l=j}^{n} s_{1,\overline{l}} \leq q_{\textbf{p}}^{\lambda} + \sum_{l=1}^k m_l
\end{equation}
where 
\[
q_{\textbf{p}}^{\lambda} = m_1 + \cdots + m_{j-1} + 2m_j + \cdots + 2m_{n-2} + m_{n-1} + m_n.
\]

 \begin{defn}
  We define $P_w(\lambda)$ to be the polytope given by the inequalities \ref{ineqd1} to \ref{ineqd3}.
 \end{defn}
 
As in the previous sections, we denote by $S_w(\lambda)$ and $R_i$ the sets
 \[
S_w(\lambda) = P_w(\lambda) \cap \mathbb{Z}_{\geq0}^{\#R_w^-}, \quad R_i = \{ \alpha \in R_w^- : (\m_i , \alpha) \neq 0 \}.
 \]
 For every $\alpha \in R_i$, we denote by $\textbf{m}_{\alpha}$ the multi-exponent $(m_{\alpha})_{\alpha \in R_w^-} \in \mathbb{Z}_{\geq0}^{\#R_w^-}$ defined by $m_{\alpha} = 1$ and zero otherwise. The following lemma follows immediately from the definition of $P_w(\m_i)$.
 
 \begin{lem}
The following points are in $S_w(\m_i)$.
 \begin{enumerate}
 \item For $i = 1$, the points $\textbf{m}_{\alpha}$ for all $\alpha \in R_i$ and the point $\textbf{m}_{\alpha_{1,n-1}} + \textbf{m}_{\alpha_{1,\bar{n}}}$.
     \item For $ 2 \leq i \leq n-2 $, the points 
            \begin{itemize}
                \item $\textbf{m}_{\alpha}, ~\alpha \in R_i$ and the point $\textbf{m}_{\alpha_{1,n-1}} + \textbf{m}_{\alpha_{1,\bar{n}}}$,
                \item the points $\textbf{m}_{\alpha_{1,j}} + \textbf{m}_{\alpha_{1,\bar{j'}}},~ i \leq j \leq n-1,~ 2 \leq j' \leq i$,
                \item the points $\textbf{m}_{\alpha_{1,\bar{j}}} + \textbf{m}_{\alpha_{1,\bar{j'}}},~ j \neq j',~ 2 \leq j, j' \leq i$
            \end{itemize}
     \item For $i = n-1$, the point $\textbf{m}_{\alpha_{1,n-1}}$ and the points $\textbf{m}_{\alpha_{1,\bar{j}}},~ 2 \leq j \leq n-1$.
     \item For $i=n$, the points $\textbf{m}_{\alpha_{1,\bar{j}}},~ 2 \leq j \leq n$.
 \end{enumerate}
 \end{lem}

\subsection{Differential operators}\label{differentialoperators} In the following, we describe some differential operators similar to the ones defined in \cite{FFLa1} for type $A$ and \cite{FFLa2} for type $C$. Similarly as in the $A_n$ case and $C_n$ case, the natural action of $U(\n^+)$ on the highest weight module $V(\lambda)$ induces the structure of a $U(\n^+)$-module on the associated graded space $\operatorname{gr}V(\lambda)$. With the identification
\[
S(\n^-) \simeq S(\g) / S(\n^-)S_+ (\mathfrak{h} \oplus \n^+)
\]
where $S_+ (\mathfrak{h} \oplus \n^+) \subset S (\mathfrak{h} \oplus \n^+)$ is the maximal homogeneous ideal of polynomials without constant term, we obtain an action of $U(\n^+)$ on $S(\n^-)$. We define differential operators $\partial_{\beta}$ by
\[
\partial_{\beta} f_{\alpha} = \begin{cases}
f_{\alpha - \beta}, \quad \text{if} \quad \alpha - \beta \in R^+ \\
0, \quad \text{otherwise}.
\end{cases}
\]
The operators $\partial_{\beta}$ satisfy the property $\partial_{\beta} f_{\alpha} = c_{\beta, \alpha} (\operatorname{ad}e_{\beta})(f_{\alpha})$ where $c_{\beta, \alpha}$ are some non-zero constants. One calculates,

\begin{lem}
The only non-zero vectors of the form $\partial_{\beta} f_{\alpha}$, $\beta , \alpha \in R^+$ are the following vectors: for $\alpha = \alpha_{i,j}, 1 \leq i \leq j \leq n-1$,
\begin{equation}
    \partial_{s,j} f_{i,j} = f_{i,s-1}, i < s \leq j, \quad \partial_{i,s} f_{i,j} = f_{s+1,j}, i \leq s < j,
\end{equation}
and for $\alpha = \alpha_{i,\overline{j}}, 1 \leq i < j \leq n$,
\begin{gather}
    \partial_{j,s} f_{i,\overline{j}} = f_{i,\overline{s+1}}, i < j \leq s, \quad \partial_{i,s} f_{i,\overline{j}} = f_{s+1,\overline{j}}, s < j-1, \quad \partial_{i,s} f_{i,\overline{j}} = f_{j,\overline{s+1}}, j \leq s , \\
     \partial_{j,\overline{s}} f_{i,\overline{j}} = f_{i,s-1}, i < j < s, \quad \partial_{i,\overline{s}} f_{i,\overline{j}} = f_{j,s-1}, j<s, \quad \partial_{i,\overline{s}} f_{i,\overline{j}} = f_{i,s-1}, j < s .
\end{gather}
\end{lem}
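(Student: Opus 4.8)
The plan is to observe that, by the very defining formula for the operators, $\partial_{\beta} f_{\alpha}$ is nonzero exactly when $\alpha - \beta \in R^+$, and in that case it equals $f_{\alpha - \beta}$. Thus the lemma is entirely a matter of enumerating, for each positive root $\alpha$, all positive roots $\beta$ with $\alpha - \beta \in R^+$, and identifying the difference $\alpha - \beta$. (As a consistency check with the description $\partial_{\beta} f_{\alpha} = c_{\beta,\alpha}(\operatorname{ad} e_{\beta})(f_{\alpha})$, one notes that $[e_{\beta}, f_{\alpha}]$ lies in the one-dimensional space $\g_{\beta - \alpha}$ and, by the standard structure theory that $N_{\beta,-\alpha} \neq 0$ whenever $\beta - \alpha$ is a root, this bracket is a nonzero multiple of $f_{\alpha - \beta}$ precisely when $\beta - \alpha$ is a negative root; but the combinatorial route below is cleaner and self-contained.)

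Next I would pass to $\epsilon$-coordinates, where $\alpha_{i,j} = \epsilon_i - \epsilon_{j+1}$ and $\alpha_{i,\overline{j}} = \epsilon_i + \epsilon_j$, so that $R^+ = \{ \epsilon_a - \epsilon_b : a < b \} \cup \{ \epsilon_a + \epsilon_b : a < b \}$. The key simplification is that the coordinate-sum of a root is $0$ for a difference root and $2$ for a sum root; since $\alpha - \beta$ must again be a root, comparing sums immediately pins down the admissible type of $\beta$. For $\alpha = \alpha_{i,j}$ (sum $0$) only difference roots $\beta = \epsilon_a - \epsilon_b$ can occur, and expanding $\epsilon_i - \epsilon_{j+1} - (\epsilon_a - \epsilon_b)$ shows a root survives exactly when the index multisets $\{i, j+1\}$ and $\{a,b\}$ meet in a single entry; the overlap at $j+1$ produces the family $\partial_{s,j} f_{i,j} = f_{i,s-1}$ and the overlap at $i$ produces $\partial_{i,s} f_{i,j} = f_{s+1,j}$, while a two-element overlap gives $0$ and a disjoint pair gives a non-root. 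This disposes of the first display.

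The bulk of the work, and the main obstacle, is $\alpha = \alpha_{i,\overline{j}} = \epsilon_i + \epsilon_j$ (sum $2$), where both difference and sum roots arise as $\beta$. Subtracting a difference root $\epsilon_a - \epsilon_b$ leaves the sum at $2$ and yields another $\alpha_{\cdot,\overline{\cdot}}$: the overlap at $j$ gives $\partial_{j,s} f_{i,\overline{j}} = f_{i,\overline{s+1}}$, and the overlap at $i$ gives $\epsilon_{s+1} + \epsilon_j$, which must then be rewritten as $f_{s+1,\overline{j}}$ or $f_{j,\overline{s+1}}$ according to whether $s+1 < j$ or $s+1 > j$, the delicate boundary case $s+1 = j$ degenerating to $2\epsilon_j$ and being discarded. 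Subtracting a sum root $\epsilon_a + \epsilon_b$ drops the sum to $0$ and yields an $\alpha_{\cdot,\cdot}$, producing the remaining families $\partial_{j,\overline{s}} f_{i,\overline{j}}$, $\partial_{i,\overline{s}} f_{i,\overline{j}} = f_{j,s-1}$, and the output $f_{i,s-1}$, each with its own positivity constraint recording which of the two surviving indices is the smaller. The entire difficulty is this bookkeeping of single-index overlaps together with the positivity (ordering) conditions; I would finish by checking that no two-element overlap other than $\beta = \alpha$ and no disjoint configuration yields a root, so that the listed families are exhaustive.
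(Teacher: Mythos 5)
Your route is the intended one: the paper itself offers nothing beyond ``One calculates,'' and since the paper \emph{defines} $\partial_{\beta}$ on generators by $\partial_{\beta}f_{\alpha}=f_{\alpha-\beta}$ when $\alpha-\beta\in R^{+}$ and $0$ otherwise, the lemma really is just the enumeration you set up. Passing to $\epsilon$-coordinates ($\alpha_{i,j}=\epsilon_{i}-\epsilon_{j+1}$, $\alpha_{i,\overline{j}}=\epsilon_{i}+\epsilon_{j}$) and cutting down the admissible type of $\beta$ by comparing coordinate sums is clean and correct.

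Two points need repair, one small and one substantive. The small one: ``a root survives exactly when the index multisets meet in a single entry'' is false as stated --- the overlap must be sign-compatible. For $\alpha=\epsilon_{1}-\epsilon_{3}$ and $\beta=\epsilon_{3}-\epsilon_{4}$ the multisets meet in the single entry $3$, yet $\alpha-\beta=\epsilon_{1}-2\epsilon_{3}+\epsilon_{4}$ is not a root. So your closing exhaustiveness check (ruling out only two-element overlaps and disjoint pairs) is incomplete: you must also dismiss the sign-mismatched single overlaps, which produce either a coefficient $\pm 2$ or a root of the wrong sign (e.g.\ $\beta=\epsilon_{a}+\epsilon_{i}$ with $a<i$ gives $\alpha_{i,\overline{j}}-\beta=\epsilon_{j}-\epsilon_{a}\in R^{-}$). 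These cases are trivial, but they are part of the ``only'' claim. The substantive one: carried out precisely, your bookkeeping for $\beta$ a sum root does \emph{not} return the printed list. The three surviving families are $\beta=\epsilon_{j}+\epsilon_{s}$ with $s>j$, giving $f_{i,s-1}$; $\beta=\epsilon_{i}+\epsilon_{s}$ with $s>j$, giving $f_{j,s-1}$; and $\beta=\epsilon_{s}+\epsilon_{j}$ with $i<s<j$, giving $f_{i,s-1}$, i.e.\ $\partial_{s,\overline{j}}f_{i,\overline{j}}=f_{i,s-1}$ for $i<s<j$. The lemma's last entry instead reads $\partial_{i,\overline{s}}f_{i,\overline{j}}=f_{i,s-1}$, $j<s$, which has the same left-hand side as the immediately preceding entry but a different right-hand side --- the statement is inconsistent as printed, evidently a typo for the family just described. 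Your sketch is vague at exactly this spot (``and the output $f_{i,s-1}$, each with its own positivity constraint''), so as written it neither derives the printed (incorrect) entry nor records the corrected one. Writing out this third family explicitly both completes your proof and flags the typo in the lemma.
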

Note that although using the same analysis, the elements $\partial_{\beta} f_{\alpha}$ do not coincide with those defined in \cite{FFLa1} and \cite{FFLa2}. 
 
 \section{Minkowski Sums}\label{section6}
 Recall the notation $S_w(\lambda),~R_i$ for the sets
 \[
S_w(\lambda) = P_w(\lambda) \cap \mathbb{Z}_{\geq0}^{\#R_w^-}, \quad R_i = \{ \alpha \in R_w^-: (\m_i , \alpha) \neq 0 \}.
\]
Let $\lambda = \sum_{j=1}^n m_j \m_j$ be a dominant integral weight of the Lie algebra $\g$ of type $A, ~B,~C$ or $D$ and let $i$ be minimal such that $m_i > 0$. For a point $\textbf{s} \in S_w(\lambda)$, let $R_i^{\textbf{s}}$ denote the set
\[
R_i^{\textbf{s}} = \{ \alpha \in R_i : \textbf{s}_{\alpha} \neq 0 \}.
\]

Define a partial order $\ll$ on $R^+$ by
\[
\alpha_{i_1, j_1} \ll \alpha_{i_2, j_2} \Leftrightarrow i_1 \leq i_2 \wedge j_1 \leq j_2.
\]
Note that the partial order $\ll$ becomes a total order when restricted to the subset $R_w^-$ for all types $A, B, C$ and $D$. Recall the Minkowski sum of two sets of points $A$ and $B$ in $\mathbb{R}^n$ is defined by
\[
A + B := \{ x + y : x \in A, y \in B \}.
\]
The aim of this section is to prove that our polytopes satisfy the Minkowski property. The main ingredient is the following procedure of writing an element $\textbf{s} \in S(\lambda)$ as a sum of elements $\textbf{s} = \sum_{i=1}^n \textbf{m}_i$ such that $\textbf{m}_i \in S_w(\m_i)$.

\begin{prop}\label{minkowskidecomposition}
 Let $\lambda = \sum_{l = i}^n m_i \m_i$ be a dominant integral $\g \operatorname{-weight}$ such that $i$ is minimal with $m_i > 0$.  Let $\textbf{s} \in S_w(\lambda),~ \textbf{m}_{i} \in S_w(\m_i)$, then $\textbf{s} - \textbf{m}_{i} \in S_w(\lambda - \m_i)$.
 \end{prop}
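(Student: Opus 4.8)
The plan is to construct the required point $\textbf{m}_i \in S_w(\m_i)$ explicitly from $\textbf{s}$ and then verify that $\textbf{s}-\textbf{m}_i$ lies in $P_w(\lambda-\m_i)$ by checking the three families of defining inequalities one path at a time. First I would record the additivity of the right-hand sides: for every path $\textbf{p}$ the bound $q_{\textbf{p}}^{\lambda}$ is a linear form in $\lambda$, so $q_{\textbf{p}}^{\lambda}=q_{\textbf{p}}^{\lambda-\m_i}+q_{\textbf{p}}^{\m_i}$. Writing $L_{\textbf{p}}(\cdot)$ for the (linear) left-hand side of the inequality attached to $\textbf{p}$, the requirement that $\textbf{s}-\textbf{m}_i$ satisfies the $\textbf{p}$-inequality for $\lambda-\m_i$ is, after cancellation, equivalent to
\[
L_{\textbf{p}}(\textbf{m}_i)\ \geq\ q_{\textbf{p}}^{\m_i}-\bigl(q_{\textbf{p}}^{\lambda}-L_{\textbf{p}}(\textbf{s})\bigr).
\]
Thus the problem reduces to finding $\textbf{m}_i\in S_w(\m_i)$ with $\textbf{m}_i\leq\textbf{s}$ componentwise (so that $\textbf{s}-\textbf{m}_i\geq 0$) whose contribution $L_{\textbf{p}}(\textbf{m}_i)$ covers the deficit $q_{\textbf{p}}^{\m_i}-(q_{\textbf{p}}^{\lambda}-L_{\textbf{p}}(\textbf{s}))$ on every path.

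Next I would use the minimality of $i$ to cut down both the support of $\textbf{s}$ and the set of paths that matter. Since $m_1=\cdots=m_{i-1}=0$, the Dyck inequalities $s_{1,1}+\cdots+s_{1,j}\leq m_1+\cdots+m_j$ force $s_{1,j}=0$ for $j<i$, so $\textbf{s}$ is supported on the part of $(R_w^-,\leq)$ from level $i$ onwards. Moreover $q_{\textbf{p}}^{\m_i}$, the coefficient of $m_i$ in the linear form $q_{\textbf{p}}^{\lambda}$, takes by inspection only the values $0,1,2$; on paths with $q_{\textbf{p}}^{\m_i}=0$ the displayed inequality is automatic, since its right-hand side is $\leq 0$ while $L_{\textbf{p}}(\textbf{m}_i)\geq 0$. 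Hence genuine constraints come only from paths that "see" $\m_i$, and among those only from the ones at which $\textbf{s}$ has small slack $q_{\textbf{p}}^{\lambda}-L_{\textbf{p}}(\textbf{s})<q_{\textbf{p}}^{\m_i}$.

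The construction itself exploits that $\ll$ is a total order on $R_i$, as noted before the statement. I would inspect $R_i^{\textbf{s}}$ and split into two cases according to whether $\textbf{s}$ saturates a degree or coefficient inequality seeing $\m_i$. If it does not, I take the single-support point $\textbf{m}_{\alpha}$ for the appropriate $\ll$-extremal $\alpha\in R_i^{\textbf{s}}$, which lies in $S_w(\m_i)$ by Remark \ref{points2}, Lemma \ref{fundamentalpoints}, Lemma \ref{fundamentalpointsB} and the corresponding lemma in type $D$; if it does, I take a double-support point $\textbf{m}_{\alpha}+\textbf{m}_{\alpha'}$ (or, in type $B$, the point $2\textbf{m}_{\alpha_{1,n}}$ when $\alpha_{1,n}$ carries the saturated degree) chosen so as to remove the full amount $q_{\textbf{p}}^{\m_i}=2$ along the saturated path. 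In either case $\textbf{m}_i\leq\textbf{s}$ holds because the chosen roots lie in $R_i^{\textbf{s}}$, so each has positive coordinate in $\textbf{s}$.

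The verification is where the work concentrates, and I expect the coefficient (interlace) inequalities to be the main obstacle. For Dyck and degree paths the chosen $\textbf{m}_i$ meets the displayed bound essentially because its support lies on every Dyck or degree path that is tight at $\textbf{s}$; this is where I would use that the simultaneously tight paths through level $i$ are nested, a consequence of the total order on $R_i$, so that one extremal root, or one compatible pair, sits on all of them at once. The coefficient inequalities are harder: the leading coordinates carry coefficient $2$, so a single decrement of $\textbf{m}_i$ can change $L_{\textbf{p}}$ by $2$, and I must track whether the saturated coordinate of $\textbf{s}$ is an unbarred root $\alpha_{1,j}$ or a barred root $\alpha_{1,\overline{j}}$, since the doubled-coefficient segment of each path begins at a different place in the two cases. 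I anticipate the verification breaking into these sub-cases for types $B, C, D$, the key point being that the fundamental point produced by the procedure always has its coefficient-$2$ coordinates exactly where the tight coefficient path demands, so the deficit is covered without overshooting into a negative coordinate; establishing this compatibility uniformly is the crux of the argument.
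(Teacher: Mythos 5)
Your skeleton is the same as the paper's: use the total order $\ll$ to pick extremal roots of $R_i^{\textbf{s}}$, subtract a single fundamental point or a pair, and verify the inequalities path by path using additivity of the right-hand sides in $\lambda$. But two things keep this from being a proof. The first is a concrete false step: you assert that $q_{\textbf{p}}^{\omega_i}$, the coefficient of $m_i$ in the right-hand side of the inequality attached to $\textbf{p}$, ``takes by inspection only the values $0,1,2$,'' and you calibrate your construction to cover a deficit of at most $2$. For the inequalities with coefficients this is wrong. In type $C$ the bound is $q_{\textbf{p}}^{\lambda}+\sum_{l=1}^{k}m_l$ with $q_{\textbf{p}}^{\lambda}=m_1+\cdots+m_{j-1}+2m_j+\cdots+2m_n$, so every $m_l$ with $j\leq l\leq k$ has coefficient $3$; the paper's own $\mathfrak{sp}_6$ example displays $2s_{1,1}+2s_{1,2}+s_{1,3}+s_{1,\overline{2}}+2s_{1,\overline{1}}\leq 2m_1+3m_2+2m_3$. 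In type $B$ the $t_{\textbf{p}}^2$-inequalities even give coefficient $4$ (the $\mathfrak{so}(9)$ row with bound $q_{\textbf{p}}^{\lambda}+\sum_{l=2}^{4}m_l+m_3$ amounts to $2m_1+2m_2+4m_3+2m_4$), and type $D$ again gives $3$. So deficits of size $3$ or $4$ are not excluded by anything you say, and none of your candidate points is guaranteed to cover them: a single point contributes at most $2$, your pairs are chosen only ``to remove the full amount $2$,'' and $2\textbf{m}_{\alpha_{1,n}}$ in type $B$ contributes only $2$ on coefficient paths since $t_{1,n}=1$ there. You would have to either produce fundamental points of larger contribution lying below $\textbf{s}$, or prove that such tightness cannot occur; neither appears in your text.

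Second, the step you yourself label the crux --- that the extremal root or pair always has its coefficient-$2$ coordinates exactly where every tight path demands --- is announced rather than proved, and your case split (on whether $\textbf{s}$ saturates an inequality ``seeing'' $\omega_i$) is not the one under which this becomes checkable. The paper splits instead on whether there exists $\alpha\in R_i^{\textbf{s}}$ with $\textbf{m}_{1,p}+\textbf{m}_{\alpha}\in S_w(\omega_i)$, where $\alpha_{1,p}$ is the $\ll$-minimal element of the support: if such an $\alpha$ exists it subtracts the minimal such pair, which is always safe because subtracting more lowers every left-hand side while both coordinates stay inside the support; if not, it invokes the structural fact your plan is missing, namely that the absence of a compatible second root forces $\textbf{s}$ to be supported on the short initial path $\tilde{\textbf{p}}$, after which the single subtraction settles all remaining inequalities at once. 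Without that support-confinement observation (or a proof of your ``nested tight paths'' heuristic), the proposal remains a plausible plan whose decisive verification is still open.
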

 
\begin{proof}
  Let $\textbf{s}'$ be such that $\textbf{s} = \textbf{s}' + \textbf{m}_i$ and let $\textbf{p}$ be a path. We have to show that $\textbf{s}'$ satisfies the defining inequality for $S_w(\lambda - \m_i)$ given by $\textbf{p}$.  For $\alpha_{i,j} \in R_i$, we sometimes denote $\textbf{m}_{\alpha} := \textbf{m}_{i,j}$, etc.
  
\begin{itemize}
 \item Type $A$: If $\textbf{p}$ is a Dyck path, the proof works in the same way as in \cite{FFLa1} ( see proof of Proposition 2), hence we may assume $\textbf{p}$ is an degree path. The upper bound for the defining inequality (\ref{ineq-2}) for $S_w(\lambda)$ associated to $\textbf{p}$ is $q_{\textbf{p}}^{\lambda} = \sum_{j=i}^n m_j + \sum_{j=r}^s m_j$ where for $ 2 \leq r \leq n-1 ,\alpha_{1,s} , \alpha_{r,n} \in \textbf{p} $ for $  r \leq s \leq n-1$. The restriction of the partial order $\ll$ to the set $R_w^-$ is a total order,which implies $R_i^{\textbf{s}}$ has a unique minimal element. Let $\alpha_{1,p} \in R_i^{\textbf{s}}$ be the unique minimal element and suppose there are some $\alpha \in R_i^{\textbf{s}}$ such that $\textbf{m}_{1,p} + \textbf{m}_{\alpha} \in S_w(\m_i)$. Let $\alpha_{q,n}$ be minimal among all such roots $\alpha$. Then we have $q_{\textbf{p}}^{\lambda - \m_i} = \sum_{j=i}^n m_j -1 + \sum_{j=i}^s m_j -1$ and 
\[
\sum_{\alpha \in \textbf{p}} s_{\alpha}' = \sum_{\alpha \in \textbf{p}} s_{\alpha} - m_{1,p} - m_{q,n} \leq q_{\textbf{p}}^{\lambda} - 2 = q_{\textbf{p}}^{\lambda - \m_i}.
\]
Therefore, $\textbf{s}'$ satisfies the defining inequality for $S_w(\lambda - \m_i)$ given by $\textbf{p}$. Now suppose there are no such roots $\alpha \in R_{i}^{\textbf{s}}$ such that $\textbf{m}_{1,p} + \textbf{m}_{\alpha} \in S_w(\m_i)$, then $q_{\textbf{p}}^{\lambda - \m_i} = \sum_{j=i}^n m_j -1 + \sum_{j=r}^s m_j$ and
\[
\sum_{\alpha \in \textbf{p}} s_{\alpha}' = \sum_{\alpha \in \textbf{p}} s_{\alpha} - m_{1,p} \leq q_{\textbf{p}}^{\lambda} - 1 = q_{\textbf{p}}^{\lambda - \m_i}
\]
therefore, $\textbf{s}'$ satisfies the defining inequality for $S_w(\lambda - \m_i)$ given to $\textbf{p}$. 

\item Type $B$ and $C$: The description of the sets $S_w(\m_i)$ is given in Lemma \ref{fundamentalpointsB} for type $B$ and Lemma \ref{fundamentalpoints} for type $C$. Let $\alpha_{1,p} \in R_i^{\textbf{s}}$ be minimal with $s_{1,p} \neq 0$. If $\textbf{p}$ is a Dyck path, we take $\textbf{m}_i = \textbf{m}_{1,p}$ and it follows using similar calculations as above for type $A$ that $\textbf{s}'$ satisfies the defining inequality for $S_w(\lambda - \m_i)$ given by $\textbf{p}$. Now suppose $\textbf{p}$ is a degree path or a path with coefficients. Assume further that there are some $\alpha \in R_i^{\textbf{s}}$ such that $\textbf{m}_{1,p} + \textbf{m}_{\alpha} \in S_w(\m_i)$. Let $\alpha_{1,\overline{q}}$ be minimal among all such $\alpha$. Then we take $\textbf{m}_i = \textbf{m}_{1,p} + \textbf{m}_{1,\overline{q}}$ and it suffices to note that by definition of the inequalities defining the polytope $P_w(\lambda)$ (see Section \ref{section3} for type $C$ and Section \ref{section4} for type $B$), there are enough copies of $m_i$ and it follows that the point $\textbf{s}'$ satisfies the required inequalities for $S_w(\lambda - \m_i)$ given by $\textbf{p}$.  Suppose there are no such $\alpha$ such that $\textbf{m}_{1,p} + \textbf{m}_{\alpha} \in S_w(\m_i)$. Let $\tilde{\textbf{p}}$ be the path
 \begin{gather*}
    \tilde{\textbf{p}} = (\alpha_1, \ldots , \alpha_{1,i+1}, \alpha_{1,\overline{1}}) \quad \text{for type C} \\
    \tilde{\textbf{p}} = (\alpha_1, \ldots , \alpha_{1,i+1}) \quad \text{for type B}.
 \end{gather*}
We have that $\textbf{s}$ is supported on $\tilde{\textbf{p}}$, i.e. $s_{\alpha} = 0$ if $\alpha \notin \tilde{\textbf{p}}$.  Then we can take $\textbf{m}_i = \textbf{m}_{1,p}$ and it follows that $\textbf{s}'$ satisfies the defining inequality for $S_w(\lambda - \m_i)$ given by $\tilde{\textbf{p}}$. 

\item Type $D$: The proof works in the same way as above for type $B$ and $C$. The only difference is that a path $\textbf{p}$ by definition (see Definition \ref{typedpath}) can contain at most one root $\alpha_{1,n-1}$ or $\alpha_{1,\overline{n}}$ and the proof follows using similar calculations as above.
\end{itemize}
\end{proof}

\begin{lem}\label{minkowski1}
$S_w(\lambda) + S_w(\mu) = S_w(\lambda + \mu)$.
\end{lem}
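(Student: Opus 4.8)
The plan is to prove the identity $S_w(\lambda)+S_w(\mu)=S_w(\lambda+\mu)$ by showing both inclusions, with the hard direction following from the Minkowski decomposition established in Proposition \ref{minkowskidecomposition}. First I would dispose of the easy inclusion $S_w(\lambda)+S_w(\mu)\subseteq S_w(\lambda+\mu)$: the polytope $P_w(\nu)$ is cut out by inequalities whose left-hand sides are fixed linear forms in the coordinates $s_\alpha$ and whose right-hand sides $q_{\mathbf p}^{\nu}$ are linear in $\nu$, hence additive, i.e. $q_{\mathbf p}^{\lambda+\mu}=q_{\mathbf p}^{\lambda}+q_{\mathbf p}^{\mu}$ for every path $\mathbf p$. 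Thus if $\mathbf a\in S_w(\lambda)$ and $\mathbf b\in S_w(\mu)$ then for each defining inequality the value of the linear form at $\mathbf a+\mathbf b$ is bounded by $q_{\mathbf p}^{\lambda}+q_{\mathbf p}^{\mu}=q_{\mathbf p}^{\lambda+\mu}$, and since $\mathbf a+\mathbf b$ is manifestly an integral point, it lies in $S_w(\lambda+\mu)$.

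For the reverse inclusion $S_w(\lambda+\mu)\subseteq S_w(\lambda)+S_w(\mu)$, I would argue by induction on the total height $\sum_j(m_j+m_j')$ where $\lambda=\sum m_j\m_j$ and $\mu=\sum m_j'\m_j$. Given $\mathbf s\in S_w(\lambda+\mu)$, choose the minimal index $i$ with $m_i+m_i'>0$; without loss of generality $m_i>0$. Then Proposition \ref{minkowskidecomposition} (together with its Corollary) produces a fundamental point $\mathbf m_i\in S_w(\m_i)$ with $\mathbf s-\mathbf m_i\in S_w(\lambda+\mu-\m_i)=S_w((\lambda-\m_i)+\mu)$. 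By the inductive hypothesis applied to $(\lambda-\m_i)+\mu$, we may write $\mathbf s-\mathbf m_i=\mathbf a'+\mathbf b$ with $\mathbf a'\in S_w(\lambda-\m_i)$ and $\mathbf b\in S_w(\mu)$. Setting $\mathbf a:=\mathbf a'+\mathbf m_i$, the easy inclusion (applied to $\lambda-\m_i$ and $\m_i$) gives $\mathbf a\in S_w(\lambda)$, and then $\mathbf s=\mathbf a+\mathbf b$ exhibits the required decomposition.

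The main obstacle is verifying that Proposition \ref{minkowskidecomposition} applies cleanly at the chosen index, namely that the fundamental point $\mathbf m_i$ subtracted off is genuinely in $S_w(\m_i)$ and that subtraction preserves nonnegativity and integrality of all coordinates; this is exactly what the case analysis in the proof of Proposition \ref{minkowskidecomposition} guarantees, by selecting $\mathbf m_i$ according to the support $R_i^{\mathbf s}$ of $\mathbf s$ (choosing the minimal $\alpha_{1,p}$, and possibly adjoining a minimal $\alpha_{1,\overline q}$ when such a paired point lies in $S_w(\m_i)$). The only subtlety is ensuring the induction base case is trivial (when $\lambda+\mu=0$, both sides are the single point $\{0\}$) and that at each step the right-hand sides $q_{\mathbf p}^{\lambda+\mu-\m_i}$ decrease by exactly the amount accounted for in the fundamental subtraction, which is precisely the content recorded in the degree and coefficient estimates of Remark \ref{idealremmark} and the corresponding constructions in types $B$, $C$ and $D$.
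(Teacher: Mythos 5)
Your proof is correct and takes essentially the same route as the paper: the paper's proof of Lemma \ref{minkowski1} is simply ``follows immediately by Proposition \ref{minkowskidecomposition}'', and your argument spells out exactly how --- the inclusion $S_w(\lambda)+S_w(\mu)\subseteq S_w(\lambda+\mu)$ from additivity of the right-hand sides $q_{\mathbf{p}}^{\nu}$ in $\nu$, and the reverse inclusion by induction, peeling off a fundamental point via Proposition \ref{minkowskidecomposition} (read, as you correctly do, as producing a suitable $\mathbf{m}_i\in S_w(\m_i)$ adapted to $\mathbf{s}$) and its Corollary.
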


\begin{proof}
 Follows immediately by Proposition \ref{minkowskidecomposition}.
\end{proof}

 \section{Main Theorem}\label{section7}
 Let $q_{\textbf{p}}^{\lambda}$ denote the right hand side of the inequality given by a path $\textbf{p}$. Recall from Section \ref{section1} that $ S(\n_w^-)v_{\lambda} \simeq S(\n_w^-) / I(\lambda)$ where $I(\lambda)$ is the annihilating ideal. Our main goal in this section is to prove the following theorem. 
 
  \begin{thm}\label{basis}
 Let $\lambda = \sum_{j=1}^n m_j \m_j$ be a dominant integral $\g \operatorname{-weight}$. 
 \begin{itemize}
 \item[i)] The set $ \{ f^{\textbf{s}} v_{\lambda}: \textbf{s} \in S_w(\lambda) \}$ forms a basis of $S(\n_w^-) v_{\lambda}$.
 \item[ii)] The ideal $I(\lambda)$ is a monomial ideal generated by the monomials $ f^{\textbf{s}} $ where $\textbf{s}$ is a multi-exponent supported on a path $\textbf{p}$ and 
 \[
 \sum_{\alpha \in \textbf{p}} s_{\alpha} = q_{\textbf{p}}^{\lambda} +1.
 \]
 \end{itemize}
 \end{thm}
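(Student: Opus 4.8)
The plan is to prove both statements simultaneously via the standard "dimension count + straightening" argument that underlies all FFLV-type results (as in \cite{FFLa1, FFLa2}), adapted to the Demazure setting. The two halves of the theorem are really two inequalities whose equality is forced by a Minkowski/character comparison. Write $I(\lambda) \subseteq S(\n_w^-)$ for the annihilating ideal, so that $\operatorname{gr} U(\n_w^-)v_\lambda \simeq S(\n_w^-)/I(\lambda)$, and let $J(\lambda)$ denote the monomial ideal generated by the monomials $f^{\textbf{s}}$ with $\textbf{s}$ supported on a path $\textbf{p}$ and $\sum_{\alpha \in \textbf{p}} s_\alpha = q_{\textbf{p}}^\lambda + 1$. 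The monomials \emph{not} in $J(\lambda)$ are, essentially by construction of the polytope, exactly those $f^{\textbf{s}}$ with $\textbf{s} \in S_w(\lambda)$; this combinatorial identity (\emph{spanning set modulo $J$}) is the bookkeeping that makes the whole scheme run.

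\emph{First} I would establish the \textbf{upper bound}: the images of $\{ f^{\textbf{s}}v_\lambda : \textbf{s} \in S_w(\lambda) \}$ \emph{span} $\operatorname{gr} U(\n_w^-)v_\lambda$. For this it suffices to show $J(\lambda) \subseteq I(\lambda)$, i.e. that each generating monomial of $J(\lambda)$ annihilates $v_\lambda$ in the graded module. Each such generator corresponds to a path $\textbf{p}$ with total degree one above the maximal admissible value $q_{\textbf{p}}^\lambda$; for Dyck paths this is the classical $\n^+$-stable relation, while for degree paths and paths with coefficients the value $q_{\textbf{p}}^\lambda$ is precisely the maximal PBW-degree achievable along $\textbf{p}$ (Remark~\ref{idealremmark} and the degree-additivity of \cite{CF}), so any monomial exceeding it must vanish. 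Combined with the spanning-set identity above, this yields $\dim \operatorname{gr} U(\n_w^-)v_\lambda \leq \# S_w(\lambda)$.

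\emph{Second} I would establish the \textbf{lower bound}: the vectors $\{ f^{\textbf{s}}v_\lambda : \textbf{s} \in S_w(\lambda)\}$ are \emph{linearly independent}. The clean route is the favourable-module machinery: using the differential operators $\partial_\beta$ of Subsection~\ref{differentialoperators}, one shows these vectors are \emph{essential} (each $f^{\textbf{s}}v_\lambda$ has a monomial witnessing it via an appropriate $\partial$-operator, no smaller monomial being reachable), which forces independence. The input that powers this step is the Minkowski property $S_w(\lambda) + S_w(\mu) = S_w(\lambda+\mu)$ (Lemma~\ref{minkowski1}): independence of essential vectors is multiplicative under Minkowski sums, so it reduces to the fundamental cases $\lambda = \m_i$, where the explicit point sets $S_w(\m_i)$ (Remark~\ref{points2}, Lemmas~\ref{fundamentalpoints}, \ref{fundamentalpointsB}) and the action of the $\partial_\beta$ can be checked directly. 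This gives $\dim \operatorname{gr} U(\n_w^-)v_\lambda \geq \# S_w(\lambda)$.

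Combining the two bounds forces equality throughout: the spanning set is a basis, proving (i), and the inclusion $J(\lambda) \subseteq I(\lambda)$ becomes an equality $J(\lambda) = I(\lambda)$, proving (ii) and in particular that $I(\lambda)$ is a monomial ideal. \textbf{The main obstacle} I anticipate is the essential-vector / independence step in the fundamental cases, specifically verifying that the $\partial_\beta$-operators separate the points $\textbf{m}_\alpha + \textbf{m}_{\alpha'}$ coming from paths with coefficients — these are the genuinely new features of the Demazure polytope (absent from the full FFLV polytope), and since, as the author notes, $P_w(\lambda)$ is generally \emph{not} a face of the FFLV polytope, one cannot simply restrict the known type $A$/$C$ independence arguments but must re-examine the operator action on exactly those extra lattice points.
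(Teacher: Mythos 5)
Your overall architecture coincides with the paper's: linear independence via essential vectors, multiplicativity of essentiality on Cartan components (Proposition \ref{ess}) together with the Minkowski property to reduce to the fundamental sets, and spanning by showing that every monomial outside the polytope annihilates $v_{\lambda}$. But there is a genuine gap in your spanning step, exactly at the point you mention only in passing. You claim that for degree paths \emph{and paths with coefficients} the bound $q_{\textbf{p}}^{\lambda}$ is the maximal PBW degree achievable along $\textbf{p}$, so any monomial exceeding it must vanish. This is true for degree paths (Remark \ref{idealremmark}; it is the paper's Proposition \ref{ideal}), but false for paths with coefficients: those inequalities are \emph{weighted}, with coefficients $t_{\alpha}\in\{1,2\}$, and a multi-exponent can violate one of them while its plain PBW degree stays within every degree bound. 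Concretely, for $\g = \s\mathfrak{p}_6$ and $\lambda = \m_3$, the point $\textbf{s} = \textbf{m}_{1,3} + \textbf{m}_{1,\overline{1}}$ satisfies all Dyck and degree inequalities ($s_{1,3}+s_{1,\overline{1}} = 2 \leq m_1+m_2+2m_3 = 2$) but violates the coefficient inequality $2s_{1,1}+2s_{1,2}+s_{1,3}+2s_{1,\overline{1}} \leq 2m_1+2m_2+2m_3 = 2$; its PBW degree $2$ equals the maximal PBW degree of $\operatorname{gr}U(\n_w^-)v_{\m_3}$ (attained by $f_{1,3}^2v_{\m_3}$, which is nonzero since $2\textbf{m}_{1,3}\in S_w(\m_3)$ labels an essential vector), so no degree argument can force $f_{1,3}f_{1,\overline{1}}v_{\m_3} = 0$. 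The same issue undermines your bookkeeping claim that the monomials outside $J(\lambda)$ are exactly those labelled by $S_w(\lambda)$: monomials not divisible by any plain path-sum violator correspond to the larger set $S_w'(\lambda)$ cut out by the Dyck and degree inequalities alone, not to $S_w(\lambda)$.

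This is precisely where the paper does its real work. It introduces $S_w'(\lambda)$, proves in Proposition \ref{polytopeminus} that any $\textbf{s} \in S_w'(\lambda)\setminus S_w(\lambda)$ admits a Minkowski decomposition with a fundamental factor $\textbf{s}_d \in S_w'(\m_d)\setminus S_w(\m_d)$ satisfying $f^{\textbf{s}_d}v_{\m_d} = 0$ (checked directly in each type), and then in Lemma \ref{trivialactionc} expands $f^{\textbf{s}}(v_{\m_d}\otimes v_{\lambda-\m_d})$ in the tensor product: the homogeneity of the order $\succ$ splits the remaining terms into those of strictly larger degree on the fundamental factor (which vanish) and those of equal degree, to which Proposition \ref{polytopeminus} is applied again, and the induction terminates after finitely many steps. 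Without this step, or some substitute for it, your argument only shows that $\{f^{\textbf{s}}v_{\lambda} : \textbf{s}\in S_w'(\lambda)\}$ spans, which does not match your independent set $\{f^{\textbf{s}}v_{\lambda} : \textbf{s}\in S_w(\lambda)\}$; the two bounds never meet, and neither (i) nor (ii) follows.
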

 
 In Subsection \ref{linearindependnce}, we prove that the set $S_w(\lambda)$ labels a linearly independent set. In Subsection \ref{spanningproperty}, we prove that $S_w(\lambda)$ labels a spanning set.
 
  \subsection{Linear independence}\label{linearindependnce}
 We will use the following notion of essential vectors, see \cite{Gor} and \cite{CF}. Recall the total order $\succ$ on the generators $f_{\alpha},~ \alpha \in R_w^-$ of $S(\n_w^-)$. We denote by the same symbol $\succ$ the induced homogeneous lexicographical order. Let $\textbf{a} \in \mathbb{Z}_{\geq 0}^m, ~ m= \# R_w^-$.
 
 \begin{defn}
  We call $f^{\textbf{a}}v_{\lambda}$ an essential vector if
  \[
  f^{\textbf{a}} v_{\lambda} \notin \operatorname{span}_{\mc} \{ f^{\textbf{a}'} v_{\lambda} : f^{\textbf{a}'} \prec f^{\textbf{a}} \}.
  \]
 \end{defn}
 
 By construction, the essential vectors $f^{\textbf{a}}v_{\lambda}$ are linearly independent. For any two dominant integral $\g \operatorname{-weights} \lambda, ~\mu$, let $U(\n_w^-)(v_{\lambda} \otimes v_{\mu}) \subset U(\n_w^-)v_{\lambda} \otimes U(\n_w^-) v_{\mu}$ be the Cartan component. The following proposition is proved in \cite{FFLb} (see Proposition 2.11).

\begin{prop}\label{ess}  If $f^{\textbf{a}}v_{\lambda}$ and $f^{\textbf{a}'}v_{\mu}$ are essential vectors, then the vector $f^{\textbf{a}+\textbf{a}'}(v_{\lambda} \otimes v_{\mu})$ is also essential.
\end{prop}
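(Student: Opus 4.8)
The plan is to argue by contradiction, using separating functionals that come directly from the definition of essential vectors together with the single structural feature of the order $\succ$, namely that a homogeneous lexicographic order is translation invariant. Write $v := v_{\lambda} \otimes v_{\mu}$. Since $\operatorname{gr}$ is taken with respect to the abelianised algebra $\g^a$, each generator acts on the tensor product through the coproduct $f_{\beta} \mapsto f_{\beta} \otimes 1 + 1 \otimes f_{\beta}$, so for every multi-exponent $\textbf{c}$ one has the multinomial expansion
\[
f^{\textbf{c}} v = \sum_{\textbf{x}+\textbf{y}=\textbf{c}} \binom{\textbf{c}}{\textbf{x}} \, f^{\textbf{x}} v_{\lambda} \otimes f^{\textbf{y}} v_{\mu}, \qquad \binom{\textbf{c}}{\textbf{x}} = \prod_{\alpha \in R_w^-} \binom{c_{\alpha}}{x_{\alpha}},
\]
and each coefficient is nonzero in $\mc$ whenever $\textbf{0} \le \textbf{x} \le \textbf{c}$.

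Next I would extract the functionals. Because $f^{\textbf{a}} v_{\lambda}$ is essential, it does not lie in $\operatorname{span}_{\mc}\{ f^{\textbf{x}} v_{\lambda} : f^{\textbf{x}} \prec f^{\textbf{a}} \}$; restricting to the homogeneous component of $\operatorname{gr} U(\n_w^-) v_{\lambda}$ of PBW-degree $|\textbf{a}|$ (the sum of the exponents), there is a homogeneous linear functional $\phi$ with $\phi(f^{\textbf{a}} v_{\lambda}) \ne 0$ and $\phi(f^{\textbf{x}} v_{\lambda}) = 0$ for every $\textbf{x} \prec \textbf{a}$ with $|\textbf{x}| = |\textbf{a}|$. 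Symmetrically, essentiality of $f^{\textbf{a}'} v_{\mu}$ yields a homogeneous functional $\psi$ with $\psi(f^{\textbf{a}'} v_{\mu}) \ne 0$ and $\psi(f^{\textbf{y}} v_{\mu}) = 0$ for every $\textbf{y} \prec \textbf{a}'$ with $|\textbf{y}| = |\textbf{a}'|$.

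Now suppose, for contradiction, that $f^{\textbf{a}+\textbf{a}'} v$ is not essential, so that $f^{\textbf{a}+\textbf{a}'} v = \sum_{\textbf{d} \prec \textbf{a}+\textbf{a}'} c_{\textbf{d}}\, f^{\textbf{d}} v$, where by homogeneity every $\textbf{d}$ has total degree $|\textbf{a}|+|\textbf{a}'|$. I would apply $\phi \otimes \psi$ to both sides after expanding each $f^{\textbf{c}} v$ via the formula above. The computation rests on translation invariance of the homogeneous lexicographic order: on the fixed-degree slice, $\textbf{x} \succeq \textbf{a}$ and $\textbf{y} \succeq \textbf{a}'$ force $\textbf{x}+\textbf{y} \succeq \textbf{a}+\textbf{a}'$, while $\textbf{x}+\textbf{y} = \textbf{a}+\textbf{a}'$ together with $|\textbf{x}| = |\textbf{a}|$ and $\textbf{x} \succ \textbf{a}$ forces $\textbf{y} \prec \textbf{a}'$. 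Applying $\phi \otimes \psi$ to the left-hand side, the degree constraints restrict to $|\textbf{x}| = |\textbf{a}|$, $|\textbf{y}| = |\textbf{a}'|$; any summand with $\textbf{x} \succ \textbf{a}$ then has $\textbf{y} \prec \textbf{a}'$ and is annihilated by $\psi$, while $\textbf{x} \prec \textbf{a}$ is annihilated by $\phi$, leaving only $(\textbf{x},\textbf{y}) = (\textbf{a},\textbf{a}')$ with value $\binom{\textbf{a}+\textbf{a}'}{\textbf{a}} \phi(f^{\textbf{a}} v_{\lambda})\, \psi(f^{\textbf{a}'} v_{\mu}) \ne 0$. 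On the right-hand side, a summand of $f^{\textbf{d}} v$ survives only if $\textbf{x} \succeq \textbf{a}$ and $\textbf{y} \succeq \textbf{a}'$, which forces $\textbf{d} = \textbf{x}+\textbf{y} \succeq \textbf{a}+\textbf{a}'$, contradicting $\textbf{d} \prec \textbf{a}+\textbf{a}'$; hence the right-hand side evaluates to $0$. The resulting contradiction $0 \ne 0$ shows that $f^{\textbf{a}+\textbf{a}'} v$ is essential.

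I expect the main obstacle to be the simultaneous bookkeeping of the two gradings: one must arrange $\phi$ and $\psi$ to be homogeneous and track the total-degree constraints $|\textbf{x}| = |\textbf{a}|$, $|\textbf{y}| = |\textbf{a}'|$ in tandem with the lexicographic comparisons, since the separating property of $\phi$ and $\psi$ holds only within the correct homogeneous component. Once translation invariance of the homogeneous lex order is isolated as the sole structural input, the argument is otherwise formal and applies uniformly to all four types $A, B, C, D$ without any reference to the explicit polytopes $P_w(\lambda)$.
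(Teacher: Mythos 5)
Your proof is correct. One thing to be aware of: the paper does not prove this proposition at all --- it is quoted from \cite{FFLb} (Proposition 2.11), so the only comparison available is with that cited argument. Your functional-based proof is essentially the linear-dual reformulation of it: where you choose separating functionals $\phi$, $\psi$ and apply $\phi \otimes \psi$, the cited proof works modulo the subspace $M_{\prec \textbf{a}} \otimes N + M \otimes N_{\prec \textbf{a}'}$ (equivalently, in the quotient $(M/M_{\prec \textbf{a}}) \otimes (N/N_{\prec \textbf{a}'})$, where a tensor product of nonzero cosets is nonzero); both arguments rest on exactly the two ingredients you isolate, the multinomial expansion of $f^{\textbf{c}}(v_{\lambda} \otimes v_{\mu})$ under the coproduct and translation invariance of the monomial order. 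One simplification you could make: the homogeneity bookkeeping you flag as the ``main obstacle'' is dispensable. Instead of constructing $\phi$ on the degree-$|\textbf{a}|$ slice, take $\phi$ to vanish on the full span $\operatorname{span}_{\mc}\{ f^{\textbf{x}} v_{\lambda} : \textbf{x} \prec \textbf{a} \}$ (all degrees), which is nonempty to separate precisely because $f^{\textbf{a}} v_{\lambda}$ is essential, and similarly for $\psi$. Then every step of your case analysis ($\textbf{x} \prec \textbf{a}$ killed by $\phi$; $\textbf{x} \succ \textbf{a}$ forces $\textbf{y} \prec \textbf{a}'$, killed by $\psi$; only $(\textbf{a},\textbf{a}')$ survives on the left, nothing survives on the right) goes through verbatim using translation invariance alone, since $\textbf{x} \succ \textbf{a}$ and $\textbf{y} \succeq \textbf{a}'$ already force $\textbf{x}+\textbf{y} \succ \textbf{a}+\textbf{a}'$ with no degree hypothesis --- the homogeneity of the order is what makes the filtered and graded notions of ``essential'' agree, but it is not needed inside the argument itself.
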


We showed in Section \ref{section6} that $S_w(\lambda + \mu) = S_w(\lambda) + S_w(\lambda)$. Then by Proposition \ref{ess}, to show that $S_w(\lambda)$ labels a linearly independent set, it is enough to show that the the fundamental sets $S_w(\m_j)$ label linearly independent sets.

\begin{prop}\label{lin-indep}
The vectors $f^{\textbf{s}}v_{\lambda}$ with $\textbf{s} \in S_w(\lambda)$ are essential.
\end{prop}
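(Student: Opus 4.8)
The plan is to reduce the general case to the fundamental case and then verify the fundamental case directly. By the Minkowski property established in Lemma \ref{minkowski1}, we have $S_w(\lambda) = S_w(\m_{j_1}) + \cdots + S_w(\m_{j_N})$ where $\lambda = \m_{j_1} + \cdots + \m_{j_N}$ is a decomposition of $\lambda$ into a sum of fundamental weights (with multiplicity). Correspondingly, writing $v_\lambda = v_{\m_{j_1}} \otimes \cdots \otimes v_{\m_{j_N}}$ inside the Cartan component of the tensor product, Proposition \ref{ess} allows us to build up essentiality: if each factor $f^{\textbf{s}_k} v_{\m_{j_k}}$ is essential with $\textbf{s}_k \in S_w(\m_{j_k})$, then $f^{\textbf{s}_1 + \cdots + \textbf{s}_N}(v_{\m_{j_1}} \otimes \cdots \otimes v_{\m_{j_N}})$ is essential. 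Thus the entire statement follows once we show that for every $i$ the vectors $f^{\textbf{s}} v_{\m_i}$ with $\textbf{s} \in S_w(\m_i)$ are essential.

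First I would fix a type and reduce to the fundamental modules $V(\m_i)$, using the explicit descriptions of $S_w(\m_i)$ recorded in Remark \ref{points2} (type $A$), Lemma \ref{fundamentalpoints} (type $C$), Lemma \ref{fundamentalpointsB} (type $B$) and the corresponding lemma in type $D$. In each of these cases $S_w(\m_i)$ consists only of the coordinate vectors $\textbf{m}_\alpha$ for $\alpha \in R_i$ together with a controlled collection of sums $\textbf{m}_\alpha + \textbf{m}_{\alpha'}$ (and a single $2\textbf{m}_\alpha$ in type $B$). Since these multi-exponents have total PBW degree at most $2$, the associated monomials $f^{\textbf{s}}$ are products of at most two root operators, and one can test essentiality directly: a vector $f^{\textbf{s}} v_{\m_i}$ is essential precisely when it is nonzero modulo the span of $\prec$-smaller monomials applied to $v_{\m_i}$. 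The key mechanism is that the differential operators $\partial_\beta$ of Subsection \ref{differentialoperators} act on $\operatorname{gr} V(\m_i)$ and can be used to separate the candidate vectors: given the leading monomial, one exhibits an operator in $U(\n^+)$ (a product of the $\partial_\beta$) that sends $f^{\textbf{s}} v_{\m_i}$ to a nonzero multiple of $v_{\m_i}$ while annihilating all $\prec$-smaller monomials, which forces linear independence from the smaller terms.

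The verification for the degree-one points $\textbf{m}_\alpha$ is essentially the statement that each $f_\alpha v_{\m_i}$ is a nonzero weight vector, which is immediate since $(\m_i, \alpha) \neq 0$ means $\alpha$ contributes to the weight $\m_i$. The substantive content lies in the degree-two points: one must show that $f_\alpha f_{\alpha'} v_{\m_i}$ does not lie in the span of $\prec$-smaller monomials. Here I would compute the action of an appropriate composite differential operator $\partial_{\beta_1} \cdots \partial_{\beta_r}$ that lowers $f_\alpha f_{\alpha'}$ back to a scalar multiple of $v_{\m_i}$, using the explicit formulas for $\partial_\beta f_\gamma$ from the lemma in Subsection \ref{differentialoperators}; the combinatorial constraints on $\alpha, \alpha'$ in the fundamental-set lemmas (for instance $\alpha = \alpha_{1,j}$, $\alpha' = \alpha_{1,\overline{j'}}$ with $2 \leq j' \leq i \leq j \leq n$ in type $C$) are exactly what guarantee such a lowering operator exists and that it kills every smaller monomial.

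The main obstacle I anticipate is the bookkeeping in types $B$, $C$ and $D$, where the presence of the ``barred'' roots $\alpha_{1,\overline{j}}$ and the two operators $f_{i,n}, \bar{f}_{i,n}$ (type $B$) or the relation $f_{i,\overline{n}} f_{i,n-1} = f_{i,n-2} f_{i,\overline{n-1}} - E_{n+i,i}$ (type $D$) introduce relations among the $f_\alpha$ acting on $v_{\m_i}$; one must check that the chosen total order $\succ$ and the chosen representatives in $S_w(\m_i)$ are compatible so that exactly one monomial from each relation survives as essential. This is where the careful choice of $\succ$ made in each section pays off, and where I would spend most of the effort: matching the leading term under $\succ$ against the output of the differential operators so that no cancellation obstructs the essentiality of the retained degree-two vectors. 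Once the fundamental cases are settled type by type, the general statement follows formally from the Minkowski decomposition and Proposition \ref{ess} as described above.
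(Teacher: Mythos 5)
Your proposal follows the same route as the paper's proof: reduce to the fundamental modules via the Minkowski property (Lemma \ref{minkowski1}) together with Proposition \ref{ess}, and then verify essentiality of the vectors $f^{\textbf{s}}v_{\m_i}$, $\textbf{s} \in S_w(\m_i)$, directly. The paper simply asserts that the fundamental case is clear from the homogeneous order $\succ$, whereas you sketch an explicit check (weight arguments for the degree-one points, differential operators $\partial_{\beta}$ for the degree-two points); this is sound additional detail on the same argument, not a different approach.
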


\begin{proof}
 The description of $S_w(\m_i)$ is given in Section \ref{section2}, \ref{section3}, \ref{section4} and \ref{section5} for $\g$ of Lie type $A,~ B, ~C$ and $D$ respectively. With respect to the homogeneous order $\succ$, it is clear that 
\[
  f^{\textbf{s}} v_{\m_i} \notin \operatorname{span}_{\mc} \{ f^{\textbf{s}'} v_{\m_i} : f^{\textbf{s}'} \prec f^{\textbf{s}} \},
  \]
hence the vectors $f^{\textbf{s}}v_{\m_i}$ are essential. We obtain by Proposition \ref{ess} together with the Minkowski property that the vectors $f^{\textbf{s}}v_{\lambda},~ \textbf{s} \in S_w(\lambda)$ are essential as well and therefore linearly independent.
\end{proof}

\subsection{Spanning property}\label{spanningproperty}
The goal of this subsection is to prove that the elements $f^{\textbf{s}}v_{\lambda}, ~ \textbf{s} \in S_w(\lambda)$ span $S(\n_w^-)v_{\lambda}$. We will show that $f^{\textbf{s}} v_{\lambda} = 0$ for any $\textbf{s} \notin S_w(\lambda)$. We begin with the following proposition.

\begin{prop}\label{ideal}
Let $\textbf{p}$ be a Dyck path or degree path and let $\textbf{s}$ be a multi-exponent supported on $\textbf{p}$, i.e. $s_{\alpha} = 0$ if $\alpha \notin \textbf{p}$. Assume further that
\[
\sum_{\alpha  \in \textbf{p}} s_{\alpha} = q_{\textbf{p}}^{\lambda} +1,
\]
then $f^{\textbf{s}} v_{\lambda} = 0$.
\end{prop}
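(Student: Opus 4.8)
The plan is to prove that $f^{\textbf{s}}$ lies in the annihilating ideal $I(\lambda) = \ker\big(S(\n_w^-) \twoheadrightarrow \operatorname{gr} U(\n_w^-) v_{\lambda}\big)$. I would build on two structural facts. First, for every root $\alpha \in R_w^-$ the $\mathfrak{sl}_2$-triple attached to $\alpha$ acts on the highest weight vector $v_{\lambda}$, so that $f_{\alpha}^{\langle \lambda, \alpha^\vee\rangle + 1} v_{\lambda} = 0$; these give a stock of explicit elements of $I(\lambda)$. Second, as recorded in Subsection \ref{differentialoperators}, the $U(\n^+)$-action on $\operatorname{gr} V(\lambda) \simeq S(\n^-)/I(\lambda)$ descends from $V(\lambda)$ and $v_{\lambda}$ is a highest weight vector, so $I(\lambda)$ is stable under the differential operators $\partial_{\beta}$ and their products. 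The strategy is then to manufacture $f^{\textbf{s}}$ inside $I(\lambda)$ by applying a suitable product of operators $\partial_{\beta}$ to one of the starting relations.

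First I would treat the case where $\textbf{p}$ is a Dyck path $(\beta_1 \geq \cdots \geq \beta_s)$. To such a path I attach its top root $\theta$, formed from the row-index of $\beta_1$ and the column-index of $\beta_s$; the Dyck condition of Definition \ref{Path} guarantees $\theta \in R_w^-$ and that $q_{\textbf{p}}^{\lambda} = \langle \lambda, \theta^\vee\rangle$. Starting from $f_{\theta}^{q_{\textbf{p}}^{\lambda}+1} \in I(\lambda)$, I would apply an explicit product $D$ of the operators $\partial_{\beta}$ that successively lowers factors of $f_{\theta}$ along the path, distributing the $q_{\textbf{p}}^{\lambda}+1$ copies among $\beta_1, \ldots, \beta_s$ according to $\textbf{s}$. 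By $\partial_{\beta}$-stability, $D \cdot f_{\theta}^{q_{\textbf{p}}^{\lambda}+1} \in I(\lambda)$. Expanding this element in the monomial basis via the formulas for $\partial_{\beta} f_{\alpha}$, I would verify that with respect to the homogeneous order $\succ$ its leading term is a nonzero multiple of $f^{\textbf{s}}$, every other monomial being either supported off $\textbf{p}$ or corresponding to a boundary multi-exponent of the same total degree on a strictly $\succ$-smaller configuration. Running an induction on $\succ$, in which those smaller boundary monomials are already known to annihilate $v_{\lambda}$, lets me subtract the lower terms and conclude $f^{\textbf{s}} v_{\lambda} = 0$.

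For the degree paths and the paths with coefficients in types $B$, $C$ and $D$, the bound $q_{\textbf{p}}^{\lambda}$ is, by Remark \ref{idealremmark}, the maximal PBW degree attainable by any monomial supported on $\textbf{p}$, obtained from the additivity of $\deg \operatorname{gr} V(\lambda)$ of \cite{CF} together with the per-fundamental-weight degrees. I would combine this degree estimate with the $\mathfrak{sl}_2$-relation for the relevant long or short root $\gamma$ (for instance the long root $\alpha_{1,\overline 1}$ in type $C$), whose coroot pairing produces the coefficient-$2$ entries, and then spread the resulting power along $\textbf{p}$ by the same $\partial_{\beta}$-procedure as above. The paths with coefficients are handled by interpolating between a degree inequality and a Dyck inequality exactly as their definition (Definition \ref{coefficientsc} and the analogues) suggests, applying $\partial_{\beta}$ to a product of the two corresponding starting relations.

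The main obstacle I anticipate is the leading-term analysis together with the induction that clears the lower-order terms. One must check, type by type, that the chosen product of differential operators applied to the starting power of $f_{\theta}$ (or $f_{\gamma}$) yields $f^{\textbf{s}}$ as the $\succ$-maximal monomial with nonzero coefficient, and that each remaining monomial is of the inductively controlled form. This is precisely where the specific total orders $\succ$ fixed in Sections \ref{section3}--\ref{section5} and the exact expressions for $\partial_{\beta} f_{\alpha}$ must mesh; the doubled coefficients on the $B$, $C$ and $D$ degree paths, which govern how many copies of each $f_{1,k}$ can be produced, make the bookkeeping the delicate point of the argument.
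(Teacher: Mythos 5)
Your Dyck-path argument is the paper's own: identify the top root $\theta$ of the path (so that $q_{\textbf{p}}^{\lambda} = \lambda(h_{\theta})$), start from the relation $f_{\theta}^{q_{\textbf{p}}^{\lambda}+1} \in I(\lambda)$, and distribute the exponents along the path by a suitable product of the operators $\partial_{\beta}$. Two remarks, though. First, the leading-term induction you bolt on is unnecessary here: in the thin posets $R_w^-$ of this paper, each operator in the chosen product annihilates every factor except $f_{\theta}$ itself (in type $A$, for instance, $\partial_{3,n}$ kills $f_{1,1}$, $\partial_{n,n}$ kills $f_{1,1},\dots,f_{1,n-3}$, etc.), so by the derivation property the output is a \emph{single} monomial, proportional to $f^{\textbf{s}}$ with a nonzero constant --- which is exactly what the paper asserts. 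Second, as formulated your induction would not even close: you allow leftover monomials ``supported off $\textbf{p}$'', but nothing makes such monomials annihilate $v_{\lambda}$, so subtracting them is not legitimate; you are saved only because such terms never arise.

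For degree paths there is a genuine gap in your plan. The operators $\partial_{\beta}$ preserve total PBW degree, so everything obtained from $f_{\gamma}^{\lambda(h_{\gamma})+1}$ by applying them has total degree $\lambda(h_{\gamma})+1$. With your stated choice $\gamma = \alpha_{1,\overline{1}}$ in type $C$ this is $m_1+\cdots+m_n+1$, which is strictly smaller than $q_{\textbf{p}}^{\lambda}+1 = m_1+\cdots+m_{j-1}+2m_j+\cdots+2m_n+1$ as soon as some $m_l>0$ with $l\geq j$; hence no monomial with $\sum_{\alpha} s_{\alpha} = q_{\textbf{p}}^{\lambda}+1$ can ever be reached this way. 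Switching the starting root does not repair this: the roots with the correct pairing, such as $\alpha_{1,\overline{j}}$ (for which $\lambda(h_{\alpha_{1,\overline{j}}}) = q_{\textbf{p}}^{\lambda}$), cannot be lowered to $f_{1,\overline{1}}$, since $\alpha_{1,\overline{j}}-\alpha_{1,\overline{1}}$ is not a positive root, while the path contains $\alpha_{1,\overline{1}}$. The paper's proof of the degree-path case is instead the one-line argument already contained in the first sentence of your own paragraph: by Remark \ref{idealremmark}, $q_{\textbf{p}}^{\lambda}$ is the maximal PBW degree attainable by a monomial supported on $\textbf{p}$, so any monomial supported on $\textbf{p}$ of degree $q_{\textbf{p}}^{\lambda}+1$ kills $v_{\lambda}$ outright --- nothing needs to be combined with $\mathfrak{sl}_2$-relations or spread by operators. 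The same remark also covers the type $A$ degree paths (\ref{extrapath}), which your second paragraph omits. Finally, paths with coefficients are not part of this proposition at all; the paper treats them later, in Lemma \ref{trivialactionc}, by a tensor-product/Minkowski argument rather than by differential operators.
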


\begin{proof}
    First suppose $\textbf{p}$ is a degree path. By Remark \ref{idealremmark}, $q_{\textbf{p}}^{\lambda}$ is the maximum PBW degree that can be achieved and exceeding this degree implies we obtain a trivial action. 
    
    Now suppose $\textbf{p}$ is a Dyck path. Our strategy is as follows: let $f_{\alpha}^{\lambda(h_{\alpha})+1}v_{\lambda} = 0 ~\operatorname{in}~ U(\n_w^-)v_{\lambda} \subset U(\n^-)v_{\lambda}$, so $f_{\alpha}^{\lambda(h_{\alpha})+1} \in I(\lambda)$. We use differential operators $\partial_{\alpha}$ which satisfy the property $\partial_{\alpha} f_{\beta} = c_{\alpha, \beta} \operatorname{ad}(e_{\alpha})(f_{\beta})$ for some non-zero constants $c_{\alpha , \beta}$. By definition of our Dyck paths, we can apply these differential operators to $f_{\alpha}^{\lambda(h_{\alpha})+1}$ to obtain new elements in $I(\lambda)$. We consider the four cases.
    \begin{itemize}
        \item Type $A$: for all positive roots $\alpha_{i,j} = \alpha_i + \cdots + \alpha_j \in R_w^-$, we have 
        \[
        f_{i,j}^{m_i + \cdots + m_j +1}v_{\lambda} = 0 \quad \text{therefore} \quad f_{i,j}^{m_i + \cdots + m_j +1} \in I(\lambda).
        \]
        The operators $\partial$ are described in \cite{FFLa1} (proof of Proposition 1.). If $\alpha_{1,n} \in \textbf{p}$, then we apply differential operators to $f_{1,n}^{m_1 + \cdots + m_n +1}$, for instance if $\textbf{p} = (\alpha_1 , \cdots , \alpha_{1,n}, \alpha_n)$, we have
        \[
        \partial_{1,n-1}^{s_{n,n}} \partial_{n,n}^{s_{1,n-1}} \ldots \partial_{3,n}^{s_{1,2}} \partial_{2,n}^{s_{1,1}} f_{1,n}^{m_1 + \ldots + m_n  +1} \in I(\lambda)
        \]
        is proportional with a non-zero constant to 
        \[
        f_{1,1}^{s_{1,1}} f_{1,2}^{s_{1,2}} \ldots f_{1,n}^{s_{1,n}} f_{n,n}^{s_{n,n}} \in I(\lambda).
        \]
        For the remaining Dyck paths, consider the paths
        \[
        \textbf{p}_1 = (\alpha_1, \ldots , \alpha_{1,j}),~ 2 \leq j \leq n-1, \quad \text{and} \quad \textbf{p}_2 = (\alpha_{i,n}, \ldots , \alpha_n),~ 2 \leq i \leq n-1.
        \]
        Then we apply differential operators on $f_{1,j}^{m_1 + \cdots + m_j +1}$ and $f_{i,n}^{m_i + \cdots + m_n +1}$ respectively and obtain 
        \[
        \partial_{j,j}^{s_{1,j-1}}  \ldots \partial_{3,j}^{s_{1,2}} \partial_{2,j}^{s_{1,1}} f_{1,j}^{m_1 + \ldots + m_{j}  +1} \quad \text{and} \quad 
        \partial_{i,i}^{s_{i+1,n}}  \ldots \partial_{i,n-2}^{s_{n-1,n}} \partial_{i,n-1}^{s_{n,n}} f_{i,n}^{m_i + \ldots + m_{n}  +1}
        \]
        which is proportional with a non-zero constant to
        \[
        f_{1,1}^{s_{1,1}} f_{1,2}^{s_{1,2}} \ldots f_{1,j}^{s_{1,j}} , ~
        f_{i,n}^{s_{i,n}} f_{i+1,n}^{s_{i+1,n}} \ldots f_{n-1,n}^{s_{n-1,n}} f_{n,n}^{s_{n,n}} \in I(\lambda)
        \]
        respectively.
        
        \item Type $C$: The differentials $\partial$ have been considered in \cite{FFLa2} and the action is given explicitly in Lemma 2.2. of the same paper. If all roots appearing in the path are also roots for the subroot system of type $A$, then $\textbf{p}$ is also a type $A$ Dyck path and the result follows from the corresponding result for type $A$ considered above. For the Dyck path $(\alpha_1, \ldots , \alpha_{1,n-1}, \alpha_{1,\overline{1}})$, since
        \[
        f_{1,\overline{1}}^{m_1 + \cdots + m_n + 1} v_{\lambda} = 0 
        \]
        we obtain the expression 
        \[
        \partial_{1,\overline{2}}^{s_{1,1}} \cdots \partial_{1,\overline{n-1}}^{s_{1,n-2}} \partial_{1,\overline{n}}^{s_{1,n-1}} f_{1,\overline{1}}^{m_1 + \cdots + m_n + 1} \in I(\lambda)
         \]
        which is proportional with a non-zero constant to
        \[
         f_{1}^{s_{1,1}} \cdots f_{1,n-2}^{s_{1,n-2}} f_{1,n-1}^{s_{1,n-1}} f_{1,\overline{1}}^{s_{1,\overline{1}}}. 
        \]
        
        \item Type $D$: The operators $\partial$ are discussed in Subsection \ref{differentialoperators}. Similarly as the type $C$ case, if $\textbf{p}$ is a path in the subroot system of type $A$, then the result follows from the corresponding result in type $A$. Consider the Dyck paths $(\alpha_1, \ldots , \alpha_{1,n-2}, \alpha_{1,\overline{n}}), (\alpha_1, \ldots , \alpha_{1,n-1}, \alpha_{1,\overline{n-1}})$ and $(\alpha_1, \ldots , \alpha_{1,n-2}, \alpha_{1,\overline{n}} \alpha_{1,\overline{n-1}})$, since 
        \[
        f_{1,\overline{n}}^{m_1 + \cdots + m_{n-2} + m_n +1}, \quad  f_{1,\overline{n-1}}^{m_1 + \cdots + m_n +1} \in I(\lambda),
        \]
        we obtain the expressions
        \begin{gather*}
        \partial_{1,\overline{2}}^{s_{1,1}} \partial_{1,\overline{3}}^{s_{1,2}} \cdots \partial_{1,\overline{n-2}}^{s_{1,n-3}} \partial_{1,\overline{n-1}}^{s_{1,n-2}} f_{1,\overline{n}}^{m_1 + \cdots + m_{n-2} + m_n + 1}, \quad \partial_{1,\overline{2}}^{s_{1,1}} \partial_{1,\overline{3}}^{s_{1,2}} \cdots \partial_{1,\overline{n-1}}^{s_{1,n-2}} \partial_{1,\overline{n}}^{s_{1,n-1}} f_{1,\overline{n-1}}^{m_1 + \cdots + m_n + 1} , \\
        \partial_{1,\overline{2}}^{s_{1,1}} \partial_{1,\overline{3}}^{s_{1,2}} \cdots \partial_{1,\overline{n-2}}^{s_{1,n-3}} \partial_{1,\overline{n-1}}^{s_{1,n-2}}
        \partial_{n-1,n-1}^{s_{1,\overline{n}}}
        f_{1,\overline{n-1}}^{m_1 + \cdots + m_n + 1} ,
        \end{gather*}
        in $I(\lambda)$ are proportional with a non-zero constant to
        \begin{gather*}
        f_{1,1}^{s_{1,1}} f_{1,2}^{s_{1,2}} \cdots f_{1,n-3}^{s_{1,n-3}} f_{1,n-2}^{s_{1,n-2}} f_{1,\overline{n}}^{s_{1,\overline{n}}} , \quad
        f_{1,1}^{s_{1,1}} f_{1,2}^{s_{1,2}} \cdots f_{1,n-2}^{s_{1,n-2}} f_{1,n-1}^{s_{1,n-1}} f_{1,\overline{n-1}}^{s_{1,\overline{n-1}}} , \\
        f_{1,1}^{s_{1,1}} f_{1,2}^{s_{1,2}} \cdots f_{1,n-3}^{s_{1,n-3}} f_{1,n-2}^{s_{1,n-2}}
        f_{1,\overline{n}}^{s_{1,\overline{n}}}
        f_{1,\overline{n-1}}^{s_{1,\overline{n-1}}} 
        \end{gather*}   
        respectively.
        
        \item Type $B$: For type $B$, all Dyck paths are also type $A$ Dyck paths, hence the result follows using the corresponding result for type $A$.
    \end{itemize}
\end{proof}

Denote by $P_w'(\lambda)$ and $S_w'(\lambda)$ the polytope defined by the inequalities arising from Dyck paths and degree paths except those with coefficients (i.e an inequality will not appear if it has coefficients greater than one) and the set of lattice points respectively.We still obtain the Minkowski property for this polytope using similar considerations used in Proposition \ref{minkowskidecomposition}.

\begin{prop}\label{polytopeminus}
Let $\textbf{s} \in S_w'(\lambda) \setminus S_w(\lambda)$ and let
\[
\textbf{s} = \textbf{s}_1 + \cdots + \textbf{s}_n
\]
be a Minkowski sum decomposition of $\textbf{s}$. There exists $\textbf{s}_d$ in this decomposition such that $\textbf{s}_d \in S_w'(\m_d) \setminus S_w(\m_d)$ and $f^{\textbf{s}_d} v_{\m_d} = 0$.
\end{prop}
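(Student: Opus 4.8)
The plan is to reduce the statement to a finite, module-specific computation at the level of fundamental weights, the passage from $\lambda$ to the $\m_d$ being supplied by the two Minkowski properties. First I would record the inclusion $S_w(\lambda) \subseteq S_w'(\lambda)$, which is immediate because $P_w'(\lambda)$ is cut out by a subset of the inequalities defining $P_w(\lambda)$ (all the Dyck- and degree-path inequalities, but none of those with coefficients). As noted just before the statement, $S_w'$ enjoys the same Minkowski property as $S_w$; applying the corresponding analogue of the decomposition procedure of Proposition \ref{minkowskidecomposition} inside $P_w'$ yields the decomposition $\textbf{s} = \textbf{s}_1 + \cdots + \textbf{s}_n$ with each fundamental piece $\textbf{s}_d \in S_w'(\m_d)$.

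For the existence of a bad piece I would argue by contradiction. If every summand $\textbf{s}_d$ already lay in the smaller set $S_w(\m_d)$, then the additivity $S_w(\lambda) + S_w(\mu) = S_w(\lambda + \mu)$ of Lemma \ref{minkowski1}, iterated over the fundamental summands, would force $\textbf{s} = \textbf{s}_1 + \cdots + \textbf{s}_n \in S_w(\lambda)$, contradicting $\textbf{s} \in S_w'(\lambda) \setminus S_w(\lambda)$. Hence at least one summand satisfies $\textbf{s}_d \in S_w'(\m_d) \setminus S_w(\m_d)$. Since the vanishing established below holds for \emph{every} point of $S_w'(\m_d) \setminus S_w(\m_d)$, any such $\textbf{s}_d$ delivers both conclusions of the proposition at once.

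It remains to show $f^{\textbf{s}_d} v_{\m_d} = 0$ for every $\textbf{s}_d \in S_w'(\m_d) \setminus S_w(\m_d)$, and I would do this type by type. In type $A$ there are no inequalities with coefficients, so $S_w'(\m_d) = S_w(\m_d)$ and the set is empty. In types $B$, $C$ and $D$ such a point satisfies every Dyck- and degree-path inequality but violates a single inequality with coefficients; combining that violation with the degree bound and the fundamental inequality with right-hand side $m_d$ (read off from Tables \ref{polytopesforfundamentalmodules} and \ref{fundamentalb}) pins the extra points to a short explicit list. In type $C$, for example, they are exactly $\textbf{m}_{1,\overline{1}} + \textbf{m}_{1,\overline{j}}$ with $2 \leq j \leq i$: these are barred from $S_w(\m_i)$ by the coefficient $2$ on $s_{1,\overline{1}}$ yet admitted to $S_w'(\m_i)$ by the degree bound $2m_i$. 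For each configuration on this list I would verify $f^{\textbf{s}_d} v_{\m_d} = 0$ directly, either by computing the weight $\m_d - \sum_{\alpha} (\textbf{s}_d)_{\alpha}\, \alpha$ and checking that it is not a weight of $V(\m_d)$, or by an explicit matrix computation with the operators $f_\alpha$ fixed in each section. Note that these relations are \emph{not} covered by Proposition \ref{ideal}, since the bad monomials have total degree $2$ while being supported on a proper subset of the relevant degree path; this is exactly why a separate argument is needed.

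The main obstacle is this last case analysis. It must be carried through for every fundamental weight in types $B$, $C$ and $D$, and it has to account correctly for the double operators peculiar to these types, namely $\bar f_{i,n} = f_{i,n}^2$ in type $B$ and the relation $f_{i,\overline{n}} f_{i,n-1} = f_{i,n-2} f_{i,\overline{n-1}}$ in type $D$. These are precisely the features responsible for the coefficient $1$ (rather than $2$) on the corresponding coordinate in the coefficient inequalities, so a naive weight count would misidentify which fundamental configurations are excluded. Once the finite list of extra fundamental points is enumerated correctly in each type, the vanishing $f^{\textbf{s}_d} v_{\m_d} = 0$ is a routine verification.
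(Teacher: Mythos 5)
Your proposal is correct and takes essentially the same route as the paper: the identical contradiction argument via the Minkowski property produces a summand in $S_w'(\m_d)\setminus S_w(\m_d)$, and the vanishing $f^{\textbf{s}_d}v_{\m_d}=0$ is exactly what the paper disposes of with the single sentence ``a straightforward computation in each type shows that $f^{\textbf{s}_d}v_{\m_d}=0$''. Your type-by-type enumeration of the extra fundamental points (empty in type $A$, e.g. $\textbf{m}_{1,\overline{1}}+\textbf{m}_{1,\overline{j}}$, $2\leq j\leq i$, in type $C$) together with the weight or explicit matrix verification supplies more detail than the paper's own proof does.
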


\begin{proof}
Suppose all the $\textbf{s}_d$ in the decomposition belong to $S_w(\m_d)$. This implies $\textbf{s} \in S_w(\lambda)$, a contradiction since $\textbf{s} \in S_w'(\lambda) \setminus S_w(\lambda)$. A straightforward computation in each type shows that $f^{\textbf{s}_d} v_{\m_d} =0$.
\end{proof}

\begin{lem}\label{trivialactionc}
For every $\textbf{s} \in S_w'(\lambda) \setminus S_w(\lambda)$, $f^{\textbf{s}} v_{\lambda} = 0$.
\end{lem}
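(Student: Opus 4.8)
The plan is to prove the vanishing by induction on $|\lambda| = \sum_j m_j$, realising the Cartan component inside a tensor product and combining the Minkowski property (Lemma \ref{minkowski1}), the Dyck/degree relations (Proposition \ref{ideal}), and the fundamental vanishing supplied by Proposition \ref{polytopeminus}. For the base case $\lambda = \m_d$ a fundamental weight, a point $\textbf{s} \in S_w'(\m_d) \setminus S_w(\m_d)$ is its own trivial Minkowski decomposition, so Proposition \ref{polytopeminus} applies directly and yields $f^{\textbf{s}} v_{\m_d} = 0$.

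For the inductive step I would let $d$ be minimal with $m_d > 0$ and realise $v_\lambda$ as the highest weight vector $v_{\m_d} \otimes v_{\lambda - \m_d}$ of the Cartan component of $V(\m_d) \otimes V(\lambda - \m_d)$. Since $\n_w^-$ acts on the tensor product by the Leibniz rule $\Delta(f_\alpha) = f_\alpha \otimes 1 + 1 \otimes f_\alpha$ and commutators only lower the PBW degree, the top degree component of $f^{\textbf{s}}(v_{\m_d} \otimes v_{\lambda - \m_d})$, computed in $\operatorname{gr} V(\m_d) \otimes \operatorname{gr} V(\lambda - \m_d)$, is
\[
\sum_{\textbf{a} + \textbf{b} = \textbf{s}} \binom{\textbf{s}}{\textbf{a}}\, f^{\textbf{a}} v_{\m_d} \otimes f^{\textbf{b}} v_{\lambda - \m_d},
\]
and it suffices to show that every summand vanishes: then $f^{\textbf{s}} v_\lambda$ has PBW degree strictly below $|\textbf{s}|$ and so is zero in $\operatorname{gr} U(\n_w^-) v_\lambda$.

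Fixing a summand with $\textbf{a} + \textbf{b} = \textbf{s}$, I would argue as follows. If $\textbf{a} \notin S_w(\m_d)$, then either $\textbf{a}$ violates a Dyck or degree inequality, whence $f^{\textbf{a}} v_{\m_d} = 0$ because the corresponding monomial lies in the ideal by Proposition \ref{ideal} (and the ideal absorbs further multiplication), or $\textbf{a} \in S_w'(\m_d) \setminus S_w(\m_d)$, whence $f^{\textbf{a}} v_{\m_d} = 0$ by the base case; either way the summand dies. If instead $\textbf{a} \in S_w(\m_d)$, then $\textbf{b} = \textbf{s} - \textbf{a} \notin S_w(\lambda - \m_d)$, since otherwise $\textbf{s} = \textbf{a} + \textbf{b} \in S_w(\m_d) + S_w(\lambda - \m_d) = S_w(\lambda)$ by Lemma \ref{minkowski1}, contradicting $\textbf{s} \notin S_w(\lambda)$. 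Hence $f^{\textbf{b}} v_{\lambda - \m_d} = 0$, again by Proposition \ref{ideal} when $\textbf{b} \notin S_w'(\lambda - \m_d)$ and by the induction hypothesis when $\textbf{b} \in S_w'(\lambda - \m_d) \setminus S_w(\lambda - \m_d)$. In every case the summand vanishes, closing the induction.

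I expect the main obstacle to be the base case, that is the fundamental vanishing packaged in Proposition \ref{polytopeminus}: one must verify type by type that every lattice point of $P_w'(\m_d)$ lying outside $P_w(\m_d)$ annihilates $v_{\m_d}$. This is precisely where the explicit and very small description of the fundamental sets in Lemmas \ref{fundamentalpoints}, \ref{fundamentalpointsB} and their type $A$ and $D$ analogues enters, since such a point either exceeds the degree bound, or has support landing in an empty weight space of the fundamental Demazure module, or is one of the explicitly vanishing products involving $f_{1,\overline{1}}$. A secondary point worth stressing is that no cancellation among the summands can occur in the displayed expansion, precisely because each summand is shown to vanish on its own; this is what legitimises passing to the top degree component in the tensor product.
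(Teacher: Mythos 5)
Your proposal is correct and takes essentially the same route as the paper: embed $V(\lambda)$ as the Cartan component of $V(\m_d)\otimes V(\lambda-\m_d)$, expand $f^{\textbf{s}}$ via the coproduct, and kill each summand $f^{\textbf{a}}v_{\m_d}\otimes f^{\textbf{b}}v_{\lambda-\m_d}$ using Proposition \ref{ideal} for Dyck/degree violations, Proposition \ref{polytopeminus} for the fundamental case, and the Minkowski property (Lemma \ref{minkowski1}) to force one factor outside its polytope. The paper phrases the recursion as an iteration steered by the homogeneous order $\succ$ ("repeat the procedure, after finitely many steps...") rather than your explicit induction on $|\lambda|$, but the mathematical content is the same, with your induction being a cleaner bookkeeping of the identical argument.
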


\begin{proof}
 By Proposition \ref{polytopeminus}, we can write $\textbf{s}$ as a Minkowski sum decomposition such that one of the summands $\textbf{s}_d \in S_w'(\m_d) \setminus S_w(\m_d)$ for some $d$. Let us consider the expansion of the expression $f^{\textbf{s}'} f^{\textbf{s}_d} (v_{\m_d} \otimes v_{\lambda - \m_d})$ where $ \textbf{s} = \textbf{s}' + \textbf{s}_d, \textbf{s}_d \in S_w'(\m_d) \setminus S_w(\m_d) $.  That is the expression
 \begin{equation}\label{expansion}
     ( f^{\textbf{s}'} f^{\textbf{s}_d} v_{\m_d}) \otimes v_{\lambda - \m_d} + v_{\m_d} \otimes (f^{\textbf{s}'} f^{\textbf{s}_d}  v_{\lambda - \m_d}) + 
    f^{\textbf{s}'} v_{\m_d} \otimes  f^{\textbf{s}_d}  v_{\lambda - \m_d} +
    f^{\textbf{s}_d} v_{\m_d} \otimes  f^{\textbf{s}'}  v_{\lambda - \m_d} + \text{ rest}.
 \end{equation}
 It is clear that the first four summands are all equal to zero. We have to show that the remaining terms rest in (\ref{expansion}) are also zero. The terms appearing in rest are terms of the form
 \begin{equation}
     f^{\textbf{s}_d'} v_{\m_d} \otimes  f^{\textbf{s}''}  v_{\lambda - \m_d} , \quad \textbf{s}_d' + \textbf{s}'' = \textbf{s},
 \end{equation}
 where some of the coordinates of $\textbf{s}_d'$ and $\textbf{s}''$ differ from those of $\textbf{s}_d$ and $\textbf{s}'$ respectively. Since we have a homogeneous order $\succ$. We have that either 
 \[
 \textbf{s}_d \succ \textbf{s}_d' \quad \text{or} \quad \textbf{s}' \succ \textbf{s}''.
 \]
 We prove the case $\textbf{s}' \succ \textbf{s}''$. The other case is completely similar. There are two possibilities:
 \begin{itemize}
     \item[i)] $\sum \textbf{s}' > \sum \textbf{s}''$. This implies $\sum \textbf{s}_d' > \sum \textbf{s}_d$ in which case this term is zero. 
     \item[ii)] $\sum \textbf{s}' = \sum \textbf{s}''$. This implies that $\sum \textbf{s}_d' = \sum \textbf{s}_d$. Then either $\textbf{s}_d' \notin S_w(\m_d)$ or $\textbf{s}_d' \in S_w(\m_d)$. If $\textbf{s}_d' \notin S_w(\m_d)$, then this term is zero hence we may assume $\textbf{s}_d' \in S_w(\m_d)$. Similarly, if $\textbf{s}'' \notin S_w'(\lambda - \m_d)$, then $\textbf{s}''$ violates at least a Dyck path inequality or a degree path inequality. By Proposition \ref{ideal}, this implies this term is zero hence we may assume $\textbf{s}'' \in S_w'(\lambda - \m_d)$. Now apply Proposition \ref{polytopeminus} to $\textbf{s}''$ and repeat the procedure above. After finitely many steps, we arrive at a point $\textbf{t} $ such that either $\textbf{t} \in S_w'(\m_{d'}) \setminus S_w(\m_{d'})$ or $\textbf{t} \notin S_w' (\m_{d'})$.
 \end{itemize}
\end{proof}
 
 \begin{cor}
    The elements $f^{\textbf{s}}v_{\lambda},~ \textbf{s} \in S_w(\lambda)$ span the module $S(\n_w^-)v_{\lambda}$.
 \end{cor}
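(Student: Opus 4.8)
The plan is to prove the sharper statement that $f^{\textbf{s}}v_\lambda = 0$ for every multi-exponent $\textbf{s} \notin S_w(\lambda)$. Since $\operatorname{gr} U(\n_w^-)v_\lambda = S(\n_w^-)v_\lambda$ is by definition spanned by all monomials $f^{\textbf{s}}v_\lambda$, discarding those that vanish leaves exactly the monomials labelled by $S_w(\lambda)$, which then span. I would organise the argument around the auxiliary polytope $S_w'(\lambda)$ cut out by the Dyck- and degree-path inequalities alone, so that $S_w(\lambda) \subseteq S_w'(\lambda)$ and the two sets differ precisely by the inequalities with coefficients.

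So let $\textbf{s} \notin S_w(\lambda)$ and distinguish two cases. If $\textbf{s} \in S_w'(\lambda) \setminus S_w(\lambda)$, then $\textbf{s}$ obeys every Dyck- and degree-path inequality but fails one of the inequalities with coefficients, and Lemma \ref{trivialactionc} gives $f^{\textbf{s}}v_\lambda = 0$ immediately. If instead $\textbf{s} \notin S_w'(\lambda)$, then by construction $\textbf{s}$ violates the inequality attached to some Dyck or degree path $\textbf{p}$, that is $\sum_{\alpha \in \textbf{p}} s_\alpha \geq q_{\textbf{p}}^\lambda + 1$. Because the total along $\textbf{p}$ already exceeds the threshold, I can select a multi-exponent $\textbf{t}$ supported on $\textbf{p}$ with $t_\alpha \leq s_\alpha$ for all $\alpha \in \textbf{p}$ and $\sum_{\alpha \in \textbf{p}} t_\alpha = q_{\textbf{p}}^\lambda + 1$. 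Proposition \ref{ideal} then places $f^{\textbf{t}}$ in the annihilating ideal $I(\lambda)$, and since $I(\lambda) \subseteq S(\n_w^-)$ is an ideal the factorisation $f^{\textbf{s}} = f^{\textbf{s}-\textbf{t}} f^{\textbf{t}}$ forces $f^{\textbf{s}} \in I(\lambda)$, i.e. $f^{\textbf{s}}v_\lambda = 0$.

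The two cases together cover every $\textbf{s} \notin S_w(\lambda)$, so the spanning claim follows and, combined with Proposition \ref{lin-indep}, yields the basis assertion of Theorem \ref{basis}. The only genuine point to verify is that this dichotomy is exhaustive, which holds because $S_w(\lambda)$ is the intersection of $S_w'(\lambda)$ with the half-spaces coming from the coefficient inequalities; a point outside $S_w(\lambda)$ must therefore either already leave $S_w'(\lambda)$ or leave $S_w(\lambda)$ only through a coefficient inequality.

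All the substantial work has been carried out already in Proposition \ref{ideal} and Lemma \ref{trivialactionc}, so I expect no serious obstacle here. The one place to stay honest is the reduction $f^{\textbf{s}} = f^{\textbf{s}-\textbf{t}}f^{\textbf{t}}$: one must check that a divisor $\textbf{t}$ supported on the offending path with prescribed total $q_{\textbf{p}}^\lambda+1$ can always be extracted, which is automatic since $\sum_{\alpha\in\textbf{p}} s_\alpha$ already meets or exceeds that total and the coordinates are nonnegative integers. Using that $I(\lambda)$ is closed under multiplication by $S(\n_w^-)$ then converts the single vanishing monomial supplied by Proposition \ref{ideal} into the vanishing of the whole monomial $f^{\textbf{s}}$.
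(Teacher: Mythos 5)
Your proof is correct and follows essentially the same route as the paper: both reduce spanning to showing $f^{\textbf{s}}v_{\lambda} = 0$ for every $\textbf{s} \notin S_w(\lambda)$, splitting into the case $\textbf{s} \in S_w'(\lambda) \setminus S_w(\lambda)$ handled by Lemma \ref{trivialactionc} and the case of a violated Dyck or degree path inequality handled by Proposition \ref{ideal}. Your explicit extraction of a divisor $f^{\textbf{t}}$ with $\sum_{\alpha \in \textbf{p}} t_{\alpha} = q_{\textbf{p}}^{\lambda}+1$ and the appeal to the ideal property of $I(\lambda)$ merely spell out a reduction the paper leaves implicit when it cites Proposition \ref{ideal} for multi-exponents whose excess over the bound may be larger than one or whose support is larger than the path.
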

 
 \begin{proof}
We know that $S(\n_w^-)v_{\lambda}$ is spanned by elements of the form $f^{\textbf{s}}v_{\lambda}$ where $\textbf{s}$ is an arbitrary multi-exponent. Let $\textbf{s} \notin S_w(\lambda)$, then there exists a path $\textbf{p}$ such that $\sum_{\alpha_{i,j} \in \textbf{p}} s_{i,j} > q_{\textbf{p}}^{\lambda}$. By Proposition \ref{ideal} and Lemma \ref{trivialactionc}, $f^{\textbf{s}} v_{\lambda} = 0$. It follows that the points $\textbf{s} \in S_w(\lambda)$ span the module $S(\n_w^-)v_{\lambda}$ and the ideal $I(\lambda)$ is a monomial ideal generated by the monomials $f^{\textbf{s}}$ such that
\[
\sum_{\alpha \in \textbf{p}} s_{\alpha} = q_{\textbf{p}}^{\lambda} + 1.
\]
\end{proof}

\begin{prop} \label{apoly}
For any $r_{\gamma} = s_{\alpha_i} s_{\alpha_{i+1}} \cdots s_{\alpha_n} \cdots s_{\alpha_{i+1}} s_{\alpha_i},~ 1 \leq i \leq n, ~ S(\n_{r_{\gamma}}^-) v_{\lambda}$ has a basis labelled by a normal polytope $P_{r_{\gamma}}(\lambda)$ such that the annihilating ideal of $S(\n_{r_{\gamma}}^-) v_{\lambda}$ is monomial. Furthermore, for any subword $u = s_{\alpha_k} s_{\alpha_{k+1}} \cdots s_{\alpha_n} \cdots s_{\alpha_i}, ~ k>i$, we have $P_u(\lambda)$ is a face of $P_{r_{\gamma}}(\lambda)$ via the embedding $P_u(\lambda) \hra P_{r_{\gamma}}(\lambda)$ defined by
\[
\textbf{s} = (s_{\alpha})_{\alpha \in R_u^-} \mapsto \textbf{s}' = (t_{\alpha})_{\alpha \in R_{r_{\gamma}}^-}  \quad \text{where} \quad t_{\alpha} := 
\begin{cases}
s_{\alpha} & , \text{if } \alpha \in R_u^- \\
0 & , \text{otherwise.}
\end{cases}
\]
\end{prop}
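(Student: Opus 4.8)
The plan is to treat the two assertions separately: the first is derived from the main theorem already established for $w = s_1 s_2\cdots s_n\cdots s_2 s_1$, and the face statement is obtained from a convexity argument together with a comparison of the two systems of defining inequalities.

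For the first assertion I would argue by reduction to a Lie algebra of smaller rank. Since $r_{\gamma} = s_{\alpha_i}\cdots s_{\alpha_n}\cdots s_{\alpha_i}$ involves only the simple reflections $s_{\alpha_i},\ldots,s_{\alpha_n}$, every root of $R_{r_{\gamma}}^-$ lies in the root subsystem $\g'$ generated by $\alpha_i,\ldots,\alpha_n$, which is again of type $A$, $B$, $C$ or $D$ of rank $n-i+1$ (in type $D$ one takes $i\le n-2$, the remaining cases being immediate). Inside $\g'$ the element $r_{\gamma}$ is precisely the distinguished element to which Theorem~\ref{basis} applies, and the subalgebra $\n_{r_{\gamma}}^-$ together with the cyclic module $U(\n_{r_{\gamma}}^-)v_{\lambda}$ and its PBW filtration depend only on $\g'$ and on the integers $\langle\lambda,\alpha^\vee\rangle$ with $\alpha\in R_{r_{\gamma}}^-$, that is, on $m_i,\ldots,m_n$. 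Applying Theorem~\ref{basis} to $\g'$ with the weight $\sum_{j=i}^n m_j\m_j$ therefore yields the normal polytope $P_{r_{\gamma}}(\lambda)$, the monomial basis of $\operatorname{gr}U(\n_{r_{\gamma}}^-)v_{\lambda}$ and the monomial annihilating ideal, which is exactly the content of the first assertion.

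For the face statement I would first record that $u = s_{\alpha_k}\cdots s_{\alpha_n}\cdots s_{\alpha_i}$ is obtained from the reduced word for $r_{\gamma}$ by deleting the initial segment $s_{\alpha_i}\cdots s_{\alpha_{k-1}}$, so that $u$ is a right factor with $\ell(r_{\gamma})=\ell(s_{\alpha_i}\cdots s_{\alpha_{k-1}})+\ell(u)$. Hence $R_u^-=\operatorname{Inv}(u)\subseteq\operatorname{Inv}(r_{\gamma})=R_{r_{\gamma}}^-$, the extension-by-zero map on $R_{r_{\gamma}}^-\setminus R_u^-$ is well defined, and $\n_u^-$ is an honest subalgebra because inversion sets are biconvex. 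Writing $H=\{\,\textbf{s}: s_\alpha=0\text{ for all }\alpha\in R_{r_{\gamma}}^-\setminus R_u^-\,\}$, the linear functional $-\sum_{\alpha\notin R_u^-}s_\alpha$ attains its maximum value $0$ on $P_{r_{\gamma}}(\lambda)$ exactly along $H$, since $P_{r_{\gamma}}(\lambda)\subseteq\mathbb{R}_{\ge0}^m$; thus $F:=P_{r_{\gamma}}(\lambda)\cap H$ is a genuine face. It then remains to identify $F$, under the coordinate projection $\pi$ forgetting the vanishing coordinates, with $P_u(\lambda)$: one shows that the restriction to $H$ of every defining inequality of $P_{r_{\gamma}}(\lambda)$ is a consequence of the inequalities of $P_u(\lambda)$, and conversely that each defining inequality of $P_u(\lambda)$ is the restriction of one for $P_{r_{\gamma}}(\lambda)$ with the same right-hand side. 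This gives $\pi(F)=P_u(\lambda)$ and $S_u(\lambda)=S_{r_{\gamma}}(\lambda)\cap H$, so the monomials $f^{\textbf{s}}v_{\lambda}$ with $\textbf{s}\in S_u(\lambda)$ are the subfamily of the basis of Theorem~\ref{basis} supported on $R_u^-$, hence a basis of the Demazure submodule $\operatorname{gr}U(\n_u^-)v_{\lambda}$, with monomial ideal inherited. Normality passes to the face for free: if a lattice point of $H$ is written as a Minkowski sum via Lemma~\ref{minkowski1}, both summands have non-negative coordinates summing to zero on $R_{r_{\gamma}}^-\setminus R_u^-$, hence again lie on $H$, so $S_u(\lambda)+S_u(\mu)=S_u(\lambda+\mu)$.

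The hard part is the inequality comparison. Verifying that the restriction to $H$ of each Dyck-path, degree-path and path-with-coefficients inequality of $P_{r_{\gamma}}(\lambda)$ is implied by, and conversely generates, the inequalities of $P_u(\lambda)$ has to be carried out type by type; the bookkeeping of the right-hand sides $q_{\textbf{p}}^{\lambda}$ for the degree paths and the paths with coefficients, together with the fact that $R_u^-$ is a truncation of $R_{r_{\gamma}}^-$ rather than a palindromic poset, is where the real work lies.
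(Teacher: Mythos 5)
Your proposal is correct, and it is worth saying how it sits relative to the paper: the paper's entire proof is the single sentence that the construction of $P_{r_{\gamma}}(\lambda)$ is the same as for $P_w(\lambda)$ and that ``the proofs done earlier can be repeated up to relabelling of indices''. Your first paragraph is exactly that relabelling made precise, phrased as a reduction: $r_{\gamma}$ lives in the rank-$(n-i+1)$ subalgebra of the same type generated by $\alpha_i,\ldots,\alpha_n$, inside which it is the element $w$ of Theorem~\ref{basis}, and $U(\n_{r_{\gamma}}^-)v_{\lambda}$ with its PBW filtration depends only on that subalgebra and on $m_i,\ldots,m_n$ (the standard Levi-branching fact you invoke implicitly); this buys you the first assertion as a black-box application of the theorem rather than a re-run of its proofs. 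For the face statement your write-up is strictly more detailed than the paper's, which offers no separate argument at all: the observation that the coordinate subspace $H$ cuts out an exposed face of $P_{r_{\gamma}}(\lambda)\subseteq\mathbb{R}^m_{\geq 0}$, and that the Minkowski property of Lemma~\ref{minkowski1} descends to that face through the supporting functional, are both correct and supply structure the paper leaves implicit. The type-by-type inequality comparison you defer (in type $A$, for instance, $R_u^-$ is $R_{r_{\gamma}}^-$ with the roots $\alpha_{i+1,n},\ldots,\alpha_{k,n}$ deleted, and one matches the paths avoiding these roots with the paths of $R_u^-$, keeping track of the right-hand sides $q_{\textbf{p}}^{\lambda}$) is precisely what the paper compresses into ``relabelling'', so your outline is no less complete than the published proof, and nothing in it would fail. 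One caution: your further claim that $S_u(\lambda)$ then labels a basis of $\operatorname{gr}U(\n_u^-)v_{\lambda}$ with inherited monomial ideal goes beyond what the proposition asserts, and would require an additional argument that the intrinsic PBW filtration of the submodule $U(\n_u^-)v_{\lambda}$ is compatible with that of $U(\n_{r_{\gamma}}^-)v_{\lambda}$; it is not needed for the statement in question.
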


\begin{proof}
 The construction of $P_{r_{\gamma}}(\lambda)$ is exactly the same as for $P_w(\lambda)$ and the proofs done earlier can be repeated up to relabelling of indices. 
\end{proof}
 
 We end this section with the following application to Demazure modules.
 
 \begin{thm}
   Let $\lambda = \sum_{j=1}^n m_j \m_j$ be a dominant integral highest weight of a simple Lie algebra of type $A,B,C$ or $D$ and let $r_{\gamma} = s_{\alpha_i} s_{\alpha_{i+1}} \cdots s_{\alpha_n} \cdots s_{\alpha_{i+1}} s_{\alpha_i},~ 1 \leq i \leq n$. Then the set
   \[
   \left\{ e^{\textbf{s}} v_{{r_{\gamma}}(\lambda)} = \prod_{\alpha \in R_{r_{\gamma}}^-} ( e_{{r_{\gamma}}(\alpha)} )^{s_{\alpha}} v_{{r_{\gamma}}(\lambda)} : \textbf{s} \in S_{r_{\gamma}}(\lambda) \right\} 
   \]
   is a basis of $\operatorname{gr} V_{r_{\gamma}}(\lambda)$ and by choosing an order in each factor also of $V_{r_{\gamma}}(\lambda)$.
 \end{thm}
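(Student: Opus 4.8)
The plan is to obtain this as a transport of structure from Theorem~\ref{basis} under the conjugation description of the Demazure module recorded in Section~\ref{section1}, namely $V_{r_{\gamma}}(\lambda) = r_{\gamma}\bigl(U(\n_{r_{\gamma}}^-)v_{\lambda}\bigr)r_{\gamma}^{-1}$. Concretely, I would fix a representative $\dot{r}_{\gamma}$ of $r_{\gamma}$ acting on $V(\lambda)$ via the Weyl group action; it is a linear automorphism $\Phi$ with $\Phi(v_{\lambda})$ a nonzero multiple of the extremal vector $v_{r_{\gamma}(\lambda)}$, and for $X \in U(\n_{r_{\gamma}}^-)$ one has $\Phi(Xv_{\lambda}) = (\dot{r}_{\gamma}X\dot{r}_{\gamma}^{-1})\Phi(v_{\lambda})$. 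For $\alpha \in R_{r_{\gamma}}^-$ the conjugate $\dot{r}_{\gamma}f_{\alpha}\dot{r}_{\gamma}^{-1}$ is a nonzero multiple of the root operator $e_{r_{\gamma}(\alpha)}$ appearing in the statement (it is the positive root vector of weight $-r_{\gamma}(\alpha)$, which is positive precisely because $\alpha \in R_{r_{\gamma}}^-$ forces $r_{\gamma}(\alpha) \in R^-$). Hence $\Phi$ restricts to a linear isomorphism $U(\n_{r_{\gamma}}^-)v_{\lambda} \xrightarrow{\sim} V_{r_{\gamma}}(\lambda)$ carrying $f^{\textbf{s}}v_{\lambda}$ to a scalar multiple of $e^{\textbf{s}}v_{r_{\gamma}(\lambda)}$.

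The decisive observation is that $\Phi$ is compatible with the PBW filtration: conjugation by $\dot{r}_{\gamma}$ sends a product of $s$ root vectors to a product of $s$ root vectors, hence preserves filtration degree, so $\Phi$ descends to an isomorphism of associated graded modules $\operatorname{gr}\Phi \colon \operatorname{gr} U(\n_{r_{\gamma}}^-)v_{\lambda} \xrightarrow{\sim} \operatorname{gr} V_{r_{\gamma}}(\lambda)$ sending the symbol of $f^{\textbf{s}}v_{\lambda}$ to that of $e^{\textbf{s}}v_{r_{\gamma}(\lambda)}$. By Theorem~\ref{basis}(i), together with Proposition~\ref{apoly} to cover the general subword $r_{\gamma}$, the vectors $\{f^{\textbf{s}}v_{\lambda} : \textbf{s} \in S_{r_{\gamma}}(\lambda)\}$ form a basis of $\operatorname{gr} U(\n_{r_{\gamma}}^-)v_{\lambda}$; applying $\operatorname{gr}\Phi$ shows that $\{e^{\textbf{s}}v_{r_{\gamma}(\lambda)} : \textbf{s} \in S_{r_{\gamma}}(\lambda)\}$ is a basis of $\operatorname{gr} V_{r_{\gamma}}(\lambda)$.

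It remains to deduce the statement for the filtered module $V_{r_{\gamma}}(\lambda)$ itself, which is the only step requiring care and is the source of the clause ``by choosing an order in each factor''. Because the operators $e_{r_{\gamma}(\alpha)}$ do not commute, the product $e^{\textbf{s}}v_{r_{\gamma}(\lambda)}$ is only well defined after an order of the factors in each monomial is fixed; any two orderings, however, differ by terms of strictly smaller PBW degree and hence share the same image under the projection onto $\operatorname{gr} V_{r_{\gamma}}(\lambda)$. With such an order fixed, the resulting family in $V_{r_{\gamma}}(\lambda)$ has leading symbols equal to the graded basis just produced, and by the standard fact that a family in a filtered space whose symbols form a basis of the associated graded is itself a basis, we conclude that $\{e^{\textbf{s}}v_{r_{\gamma}(\lambda)} : \textbf{s} \in S_{r_{\gamma}}(\lambda)\}$ is a basis of $V_{r_{\gamma}}(\lambda)$. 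I expect the main obstacle to be purely the bookkeeping: verifying that $\dot{r}_{\gamma}$ identifies each $f_{\alpha}$ with a scalar multiple of $e_{r_{\gamma}(\alpha)}$ and preserves the filtration, which is routine given the explicit root vectors fixed in Sections~\ref{section2}--\ref{section5}.
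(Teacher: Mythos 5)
Your proposal is correct and follows essentially the route the paper intends: the paper states this theorem as an immediate application of Theorem~\ref{basis} (together with Proposition~\ref{apoly}) via the conjugation identification $V_{r_{\gamma}}(\lambda) = r_{\gamma}\bigl(U(\n_{r_{\gamma}}^-)v_{\lambda}\bigr)r_{\gamma}^{-1}$ set up in Section~\ref{section1}, which is exactly the transport-of-structure argument you spell out. Your explicit verification that the conjugation preserves PBW degree, and your handling of the ordering issue when lifting the graded basis to $V_{r_{\gamma}}(\lambda)$, supply precisely the details the paper leaves implicit.
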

 
 \section{Acknowledgements} The author would like to thank Ghislain Fourier and Johannes Flake for many useful discussions and constant support. The author is supported by a scholarship from the German Academic Exchange Service (DAAD) under the grant 'Research Grants - Doctoral Programmes in Germany, 2018/19'. 
 
 \printbibliography
 
 %\section*{\refname}
% \bibliographystyle{elsarticle-num}
%\bibliography{hook-shapes}

\end{document}